\documentclass[11pt]{amsart}
\usepackage{amsmath,amssymb,amsthm,pinlabel,tikz,hyperref,mathrsfs,color, thmtools}
\usepackage{verbatim}
\usepackage{fullpage}
\usepackage{float}
\usepackage{caption}
\usepackage{subcaption}
\usepackage{enumitem}
\newcommand{\nc}{\newcommand}
\nc{\dmo}{\DeclareMathOperator}
\dmo{\ra}{\rightarrow}
\dmo{\Prob}{\mathbb{P}}
\dmo{\E}{\mathbb{E}}
\dmo{\N}{\mathbb{N}}
\dmo{\Z}{\mathbb{Z}}
\dmo{\Q}{\mathbb{Q}}
\dmo{\R}{\mathbb{R}}
\dmo{\C}{\mathcal{C}}
\dmo{\X}{\mathcal{X}}
\dmo{\U}{\mathcal{U}}
\dmo{\T}{\mathcal{T}}
\dmo{\F}{\mathcal{F}}
\dmo{\AC}{\mathcal{AC}}
\dmo{\AxN}{\mathrm{[id, \varphi^{N}]}}
\dmo{\AxCN}{\Upsilon_{N}}

\dmo{\w}{\omega}
\dmo{\MIN}{\mathcal{MIN}}
\dmo{\Mod}{Mod}
\dmo{\PMod}{PMod}
\dmo{\PMF}{\mathcal{PMF}}
\dmo{\Mat}{Mat}
\dmo{\supp}{supp}
\dmo{\UE}{\mathcal{UE}}
\dmo{\vol}{vol}
\dmo{\B}{B}
\dmo{\PB}{PB}
\dmo{\PR}{PSL(2,\mathbb{R})}
\dmo{\GL}{GL(k, \mathbb{C})}
\dmo{\SL}{SL(2, \mathbb{Z})}
\dmo{\Isom}{Isom}
\dmo{\RP}{\mathbb{R} \mathrm{P}}
\dmo{\I}{\mathcal{I}}
\dmo{\el}{\ell_{\C}}
\dmo{\NN}{\mathcal{N}}
\dmo{\rk}{rank}
\dmo{\tr}{tr}
\dmo{\llangle}{\langle\langle}
\dmo{\rrangle}{\rangle\rangle}
\dmo{\Unif}{Unif}
\dmo{\Out}{Out}
\dmo{\Proj}{\operatorname{Proj}}
\dmo{\sumRho}{\mathcal{N}}
\dmo{\stopping}{\vartheta}
\dmo{\diam}{\operatorname{diam}}
\dmo{\LMap}{\mathit{L}}

\usetikzlibrary{decorations.markings, patterns}
\tikzset{->-/.style={decoration={
  markings,
  mark=at position #1 with {\arrow{>}}},postaction={decorate}}}

\nc{\nt}{\newtheorem}

\nt{theorem}{Theorem}

\newtheorem{thm}{{\bf Theorem}}[section]
\newtheorem{lem}[thm]{{\bf Lemma}}
\newtheorem{cor}[thm]{{\bf Corollary}}
\newtheorem{prop}[thm]{{\bf Proposition}}
\newtheorem{fact}[thm]{Fact}

\newtheorem{claim}[thm]{Claim} 
\newtheorem{remark}[thm]{Remark}

\newtheorem{conj}[thm]{Conjecture}
\newtheorem{dfn}[thm]{Definition}
\numberwithin{equation}{section}

\title[Pseudo-Anosovs and counting]{Counting pseudo-Anosovs as weakly contracting isometries}

\date{\today}
\author{Inhyeok Choi}

\address{%
		School of Mathematics, KIAS\\
		Hoegi-ro 85, Dongdaemun-gu, Seoul 02455, South Korea
}
\email{
        inhyeokchoi48@gmail.com
        }

\begin{document}
\begin{abstract}
We show that pseudo-Anosov mapping classes are generic in every Cayley graph of the mapping class group of a finite-type hyperbolic surface. Our method also yields an analogous result for rank-one CAT(0) groups and hierarchically hyperbolic groups with Morse elements. Finally, we prove that Morse elements are generic in every Cayley graph of groups that are quasi-isometric to (well-behaved) hierarchically hyperbolic groups. This gives a quasi-isometry invariant theory of counting group elements in groups beyond relatively hyperbolic groups. 

\noindent{\bf Keywords.} Mapping class group, pseudo-Anosov map, counting problem, weakly contracting axis

\noindent{\bf MSC classes:} 20F67, 30F60, 57K20, 57M60, 60G50
\end{abstract}

\maketitle

%
%

\section{Introduction}	\label{sec:introduction}

An important goal in  geometric group theory is to define and investigate the large-scale hyperbolicity of a finitely generated group. Ideally, such a notion of hyperbolicity does not depend on the fine details of the Cayley graph of the group and is a quasi-isometry invariant. Model examples are the notions of  $\delta$-hyperbolic spaces and word hyperbolic groups due to M. Gromov \cite{gromov1987hyperbolic}, which encompass negatively curved manifolds and their fundamental groups, as well as trees and free groups.

Often, a group is not globally negatively curved but only non-positively curved, with certain negatively curved directions. To deal with such a group, Gromov suggested the concept of relative hyperbolicity, which is developed further by many authors: see \cite{farb1998relatively}, \cite{szczepanski1998relatively}, \cite{bowditch2012relatively}, \cite{osin2006relatively}. The class of relatively hyperbolic groups includes many Kleinian groups and fundamental groups of non-positively curved manifolds. Moreover, relative hyperbolicity is an intrinsic property of the group, not depending on the particular choice of a word metric.

Two important examples of non-relatively hyperbolic groups are mapping class groups and right-angled Artin groups. The Cayley graphs of these groups contain flats around every vertex, which hinders the relative hyperbolicity, but also contain directions with negatively curved features. It is then natural to ask whether most directions and elements in these groups are ``hyperbolic" when seen at large scale, regardless of the local structure of the group (i.e., the choice of the generating set). 

We now ask one concrete question. Let $\Sigma$ be a complete hyperbolic surface of finite type that is not a three-punctured sphere and let $\Mod(\Sigma)$ be the mapping class group of $\Sigma$. By the celebrated Nielsen-Thurston classification theorem, a mapping class in $\Mod(\Sigma)$ is either periodic, reducible, or pseudo-Anosov \cite{1979travaux}, \cite{thurston1988classification}. Considering the analogy between $\Mod(\Sigma)$ and $SL(2, \Z)$, it has been widely believed that most elements in $\Mod(\Sigma)$ are pseudo-Anosov. There are two popular ways to interpret this belief (cf. \cite[Conjecture 3.15]{farb2006problems}): one in terms of random walks on $\Mod(\Sigma)$, studied by Rivin \cite{rivin2008walks} and Maher \cite{maher2011random}, and the other based on the metric balls in the Cayley graph of $\Mod(\Sigma)$. The latter interpretation is as follows:
\begin{conj}\label{conj:main}
Let $S$ be a (specific or arbitrary) finite generating set of $\Mod(\Sigma)$, and let $B_{S}(R)$ be the collection of mapping classes whose word norm with respect to $S$ is at most $R$. Then we have\begin{equation}\label{eqn:mainConj}
\lim_{R \rightarrow \infty} \frac{\# \big\{g \in B_{S}(R) : \textrm{$g$ is pseudo-Anosov}\big\}}{\#B_{S}(R)} = 1.
\end{equation}
\end{conj}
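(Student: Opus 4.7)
The strategy is to exhibit, inside any Cayley graph of $\Mod(\Sigma)$, a pseudo-Anosov $\varphi$ whose powers are \emph{weakly contracting} with respect to $d_S$, and then to show that most elements of large $S$-balls either already contain a large power of $\varphi$ as a visible piece of their geodesic spelling, or can be modified to do so at negligible cost in word length. This converts the counting problem into a geometric series argument on translates of the axis of $\varphi$.

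\emph{Step 1: Geometric setup.} I would fix a pseudo-Anosov $\varphi \in \Mod(\Sigma)$ (for concreteness, one obtained from Thurston's construction on a pair of filling multicurves) and study its quasi-axis both in the curve graph $\mathcal{C}(\Sigma)$, where it is strongly contracting by Masur--Minsky, and in the Cayley graph $(\Mod(\Sigma), d_S)$, where strong contraction generally fails because of flats. The key technical claim is that the quasi-axis of $\varphi$ is nevertheless \emph{weakly} contracting with respect to $d_S$: any $d_S$-geodesic whose closest-point projection onto $\langle \varphi \rangle \cdot x_0$ has large diameter must fellow-travel a proportional sub-segment of the axis. This is the bridge that lets the Masur--Minsky picture be felt in every $S$.

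\emph{Step 2: Counting via concatenation.} Writing $\NN_R$ for the set of non-pseudo-Anosov elements in $B_S(R)$, I would assign to each $g\in \NN_R$ a modified element of the form $w_1\, \varphi^N\, w_2\, g$ with additive cost controlled by the weakly contracting property, in such a way that the modified element is pseudo-Anosov and the map $g \mapsto \mathrm{(modified)}$ has bounded multiplicity. A telescoping pigeonhole over distinct axis-translates, in the spirit of the Arzhantseva--Cashen--Tao and Yang arguments for contracting elements, then yields the growth gap
\[
\limsup_{R\to\infty}\frac{1}{R}\log \#\NN_R \;<\; \limsup_{R\to\infty}\frac{1}{R}\log \#B_S(R),
\]
which is strictly stronger than \eqref{eqn:mainConj}.

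\emph{Main obstacle.} The chief difficulty is that strong contraction of the $\varphi$-axis in $(\Mod(\Sigma), d_S)$ is unavailable for arbitrary $S$, and we have no a priori reason for the action on $(\Mod(\Sigma),d_S)$ to be statistically convex-cocompact, so Yang's theorem does not apply off the shelf. The weakening from strongly contracting to weakly contracting must therefore be sharp enough to still power the telescoping counting in Step 2, while loose enough to be automatically enjoyed by the axis of any pseudo-Anosov in any Cayley graph; reconciling these two demands, and making the pigeonhole work against the weaker projection estimates, is the technical crux of the paper.
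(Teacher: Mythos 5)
Your overall architecture — fix a pseudo-Anosov $\varphi$ whose axis is weakly contracting for any word metric $d_S$, splice a $\varphi$-segment into a geodesic spelling of a non-pA element, and count by showing the modification has small fibers — is the same strategy the paper follows. However, your Step~2 as written would fail, for a reason the paper itself spends considerable effort circumventing.

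The problem is the assertion that "the map $g \mapsto \mathrm{(modified)}$ has bounded multiplicity." This is exactly what weak contraction does \emph{not} give you, and it is the conceptual heart of the paper's difficulty. When you try to recover $g$ from the modified element $U$, you need to pin down the point of $[id, U]_S$ at which the $\varphi$-segment begins. With strong contraction, that point is determined up to a bounded error. With only weak contraction (the Duchin--Rafi estimate, Proposition~\ref{prop:duchin}), the projection of $[id, U]_S$ onto a translated $\varphi$-axis can wander by an amount \emph{proportional} to $\|U\|_S$; as the paper's strategy section notes, this yields an error of order $n$ in the injectivity, which is fatal. The paper's resolution is the concatenation lemma for weakly contracting geodesics (Proposition~\ref{prop:weakConcat2}, Corollary~\ref{cor:weakConcatSqrt}): if $N$ different preimages give rise to $N$ disjoint, aligned $\varphi$-axis translates seen by $[x_0, Ux_0]_{\mathcal{C}}$, then the word length is forced to be at least on the order of $N^2$. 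This makes the modification only $O(\sqrt{n})$-to-$1$ (Lemma~\ref{lem:almostInj}), \emph{not} bounded-to-$1$, and the resulting decay is the polynomial bound $C/\sqrt{R}$ of Theorem~\ref{thm:mainTr} rather than the exponential growth gap $\limsup \frac{1}{R}\log \#\NN_R < \limsup \frac{1}{R}\log\# B_S(R)$ you claim. That exponential statement is what the Arzhantseva--Cashen--Tao/Yang arguments give for \emph{strongly} contracting elements, and it remains open for arbitrary generating sets of $\Mod(\Sigma)$. You flag this tension in your "Main obstacle" paragraph, but the plan in Step~2 does not actually resolve it, and the resolution changes the shape of the conclusion.

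A second gap: your setup omits the other geometric input the paper needs, namely the WPD-ness of $\varphi$ on $\mathcal{C}(\Sigma)$ (Fact~\ref{fact:pseudoAnosov}(2)). Weak contraction tells you that two spliced $\varphi$-axes with overlapping shadows on $[x_0, Ux_0]_\mathcal{C}$ fellow-travel as curve-complex segments, but that alone cannot pin down the group elements that translate the axis — in principle infinitely many group elements could produce the same shadow. WPD is what converts a large overlap in $\mathcal{C}(\Sigma)$ into a bounded set of possibilities in $\Mod(\Sigma)$, and it is used crucially in the proof of Lemma~\ref{lem:almostInj} to collapse overlapping preimages into $O(1)$ many choices. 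Without WPD, the injectivity analysis does not get off the ground.
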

We say that pseudo-Anosovs are \emph{generic} with respect to $S$ if Equation \ref{eqn:mainConj} holds.

Except when $\Sigma$ is a once-punctured torus or 4-punctured sphere (in which case the mapping class group is commensurable to $SL(2, \Z)$), Conjecture \ref{conj:main} was not answered for any finite generating set $S$ for a long time. Seven years ago, M. Cumplido and B. Wiest found a concise argument that shows that the density of pseudo-Anosovs in a growing word metric ball is bounded away from 0 \cite{cumplido2018pA}. Later, the author gave a partial answer to Conjecture \ref{conj:main}: for each generating set $S$, one can enlarge $S$ by adding some mapping classes to make Display \ref{eqn:main} hold \cite{choi2021generic}. Another recent construction of the generating set $S$ for Conjecture \ref{conj:main} is due to D. Mart{\' i}nez-Granado and A. Zalloum \cite{martinez-granado2024growth}. Their construction relies on a $\Mod(\Sigma)$-invariant, proper and cocompact metric $d'$ on $\Mod(\Sigma)$ that exhibits a (conjecturally) stronger hyperbolicity than the word metrics on $\Mod(\Sigma)$. Namely, the axes of pseudo-Anosov mapping classes are strongly contracting with respect to $d'$, which implies the exponential genericity of  pseudo-Anosovs in $(\Mod(\Sigma), d')$. Then the generating set $S$ is defined by collecting orbit points in a large $d'$-metric ball.

In this paper, we give an affirmative answer to the strong version of Conjecture \ref{conj:main}, independently of the choice of $S$. That means, we prove: \begin{theorem}\label{thm:main}
Let $S$ be any finite generating set of $\Mod(\Sigma)$. Then there exists $K>0$ such that  \begin{equation}\label{eqn:main}
\frac{\# \big\{g \in B_{S}(R) : \textrm{$g$ is non-pseudo-Anosov}\big\}}{\#B_{S}(R)} \le \frac{K}{\sqrt{R}}
\end{equation}
holds for all $R>0$. In particular, pseudo-Anosovs are generic in every Cayley graph of $\Mod(\Sigma)$.
\end{theorem}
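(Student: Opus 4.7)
My plan is to exploit the action of $\Mod(\Sigma)$ on the curve graph $\C(\Sigma)$, on which a mapping class acts loxodromically if and only if it is pseudo-Anosov (Masur--Minsky), and on which pseudo-Anosov axes are weakly contracting. The goal is to transfer this weak contraction to the Cayley graph $\mathrm{Cay}(\Mod(\Sigma), S)$ in a way that is robust to the choice of $S$, via a pivoting-style counting argument yielding the polynomial rate $C/\sqrt{R}$.

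\textbf{Step 1 (Schottky construction).} I would first fix a small finite collection $F = \{\phi_1, \ldots, \phi_k\}$ of independent pseudo-Anosov mapping classes whose axes in $\C(\Sigma)$ form a ping-pong configuration: each $\phi_i$ has translation length bounded below by a uniform constant, and the axes of distinct $\phi_i, \phi_j$ have uniformly bounded pairwise projection. These serve as ``pivots.'' Independence of $F$ from $S$ is essential, since the generating set is arbitrary.

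\textbf{Step 2 (Pivot positions along a word).} For a geodesic $S$-word $s_1 \cdots s_R$ representing $g \in B_S(R)$, declare an index $j$ to be a pivot position if the partial product $s_1 \cdots s_j$ can be modified by left- or right-insertion of some $\phi_i \in F$ in a way that changes the $S$-length only by a controlled additive constant and that certifies loxodromic behavior on $\C(\Sigma)$. Using the Schottky geometry of $F$, one shows that having at least two suitably separated pivot positions forces $g$ to act loxodromically on $\C(\Sigma)$, hence to be pseudo-Anosov.

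\textbf{Step 3 (Counting bad elements).} The ratio estimate is obtained by a multi-valued injection argument. I would construct a map that sends each ``bad'' (non-pseudo-Anosov) element of $B_S(R)$ to a family of elements of $B_S(R+O(1))$ obtained by inserting pivots at available positions, and show that this family has size roughly $\sqrt{R}$. Equivalently: the number of pivot positions along a typical $S$-geodesic of length $R$ concentrates around $cR$ with fluctuations of order $\sqrt{R}$, so the bad set, where the pivot count is anomalously small, carries measure at most $O(1/\sqrt{R})$ inside $B_S(R)$. A pigeonhole against $\#B_S(R+O(1)) \le K\cdot \#B_S(R)$ converts this into Display \ref{eqn:main}.

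\textbf{Main obstacle.} The hardest ingredient is Step 2 for \emph{arbitrary} $S$: a geodesic $S$-word need not project to a quasi-geodesic in $\C(\Sigma)$, so the weak contraction of pseudo-Anosov axes in $\C(\Sigma)$ does not automatically supply geometric control in $\mathrm{Cay}(\Mod(\Sigma), S)$. What is required is a Cayley-graph-level pivoting lemma, in the spirit of Gou\"ezel's pivoting technique for random walks but adapted to deterministic ball counting, that produces the requisite $\sqrt{R}$ multiplicity of distinct preimages while preserving the word-length constraint up to an additive error. This combinatorial lemma is the crux; once established, the application to hierarchically hyperbolic and rank-one CAT(0) settings announced in the abstract should follow by the same scheme, replacing $\C(\Sigma)$ with an appropriate hyperbolic space on which the relevant contracting elements act loxodromically with weakly contracting axes.
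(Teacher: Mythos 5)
Your high-level scheme is the same as the paper's: insert pseudo-Anosov ``pivot'' segments at $\Theta(R)$ positions of a $d_S$-geodesic word, show the resulting replacement map is $O(\sqrt{R})$-to-one, and pigeonhole against $\# B_S(R)$. You also correctly identify that arbitrary $S$-geodesics need not project to quasi-geodesics in $\mathcal{C}(\Sigma)$, so the weak contraction of pseudo-Anosov axes must be converted into Cayley-graph control. However, the proposal stops exactly at this crux and does not supply the mechanism, and the heuristic you do offer for the $\sqrt{R}$ rate is not the one that works.

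The paper's key tool, which is absent from your outline, is the \emph{concatenation lemma for weakly contracting geodesics} (Propositions \ref{prop:weakConcat1}, \ref{prop:weakConcat2} and Corollaries \ref{cor:weakConcat}, \ref{cor:weakConcatSqrt}). If $(gx_0, \Proj\gamma_1,\ldots,\Proj\gamma_n,hx_0)$ is aligned in $\mathcal{C}(\Sigma)$, then the $d_S$-geodesic $[g,h]_S$ passes within a controlled distance of each $\gamma_i$, and crucially the contributions of $d_S(q_{i-1},q_i)$ to the threshold for the $j$-th segment decay geometrically in $|i-j|$. Summing gives $\sum_i d_S([g,h]_S,\gamma_i) \le \tfrac12 d_S(g,h)$ (Corollary \ref{cor:weakConcat}), and if the $\gamma_i$ are all far from a fixed point on $[g,h]_S$ one gets $d_S(g,h)\gtrsim n^2$ (Corollary \ref{cor:weakConcatSqrt}). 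That last inequality, combined with WPD-ness (Fact \ref{fact:pseudoAnosov}) to pass from coarse overlaps of orbit sequences in $\mathcal{C}(\Sigma)$ to closeness in $\Mod(\Sigma)$, is precisely what caps the number of preimages of $F_n$ at $O(\sqrt{n})$ (Lemma \ref{lem:almostInj}). Your explanation that ``the number of pivot positions concentrates around $cR$ with fluctuations of order $\sqrt{R}$'' is not how the bound arises and would not survive as stated: there is no law-of-large-numbers phenomenon here, and without the exponential decay of errors along the aligned chain the weak contraction only gives a linear error $\epsilon R$, which would make the map $O(R)$-to-one and destroy the estimate. Your Step 2 is also missing the paper's dichotomy: before the injection argument the paper separates off the set $\mathcal{A}_{thick}$ (elements already seeing a well-placed $\varphi$-orbit segment) and shows the bad elements in $\mathcal{A}_{thick}$ are \emph{exponentially} negligible (Lemma \ref{lem:mainGenLem}); the $O(\sqrt{R})$ injection only needs to handle the complement. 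Finally, a Schottky family is unnecessary: the paper uses a single WPD pseudo-Anosov $\varphi$, with the two connecting letters $s,t\in S\cup\{id\}$ from Fact \ref{fact:pseudoAnosov}(1) playing the role of the pivoting freedom.
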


In fact, we show a stronger statement regarding the translation length on the Cayley graph and on the curve complex of a generic element in $\Mod(\Sigma)$. For this statement, see Theorem \ref{thm:mainTr}. Furthermore, the rate of decay of the density of non-pseudo-Anosov elements is  faster than any polynomial rate. For this statement, see Theorem \ref{thm:mainSpeed}.

Other important objects in geometric group theory include CAT(0) groups, which are groups acting geometrically on a CAT(0) space. We say that a CAT(0) group is \emph{rank-1} if it contains elements whose axes on the CAT(0) space do not bound a flat half-plane. Many examples of rank-1 CAT(0) groups come from CAT(0) cube complexes \cite{caprace2011rank}. It is natural to ask if rank-1 CAT(0) groups are mostly composed of rank-1 elements. When this question is asked in reference to the ambient CAT(0) metric, the genericity of rank-1 elements is answered by W. Yang \cite{yang2020genericity}. One can then ask the genericity of rank-1 elements in the Cayley graphs of the group (which are usually not CAT(0)!). We answer this question as follows:

\begin{theorem}\label{thm:CAT(0)main}
Let $G$ be a non-virtually cyclic rank-1 CAT(0) group. Let $S$ be an arbitrary finite generating set of $G$. Then there exists $K>0$ such that \[
\frac{\# \{g \in B_{S}(R) : \textrm{$g$ is not rank-1} \} }{\#B_{S}(R)} \le \frac{K}{\sqrt{R}}
\]
holds for all $R>0$. In particular, Morse elements are generic in every Cayley graph of $G$.
\end{theorem}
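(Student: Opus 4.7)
The plan is to reduce Theorem~\ref{thm:CAT(0)main} to the abstract counting framework for actions of groups by weakly contracting isometries that drives the proof of Theorem~\ref{thm:main}. Since $G$ is a non-virtually cyclic rank-1 CAT(0) group acting geometrically on a CAT(0) space $X$, a standard Schottky argument supplies two independent rank-1 isometries $h_1, h_2 \in G$ whose axes on $X$ are strongly contracting and which generate a quasi-convex free subgroup of $G$. The orbit map $(G, d_S) \to X$ is $G$-equivariant and coarsely Lipschitz, so pulling the contracting axes back to the Cayley graph $\operatorname{Cay}(G, S)$ yields that $h_1, h_2$ act as independent Morse elements with uniformly (weakly) contracting quasi-axes in $\operatorname{Cay}(G, S)$, regardless of the choice of $S$.

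With this setup, the strategy is to run a pivoting argument inside the Cayley graph. For each $g \in B_S(R)$, fix a geodesic word $s_1 \cdots s_\ell$ representing $g$ and examine the $\ell+1$ prefix group elements. Using the weak contraction of the quasi-axis of $h_1$, one identifies a collection of pivot positions at which $h_1$ (or $h_2$, to force alternation) can be inserted in a Schottky pattern; the concatenation/pivoting lemma for weakly contracting isometries then guarantees that any word admitting at least $K = K(G, S)$ well-separated pivot positions represents a Morse element. Following the pigeonhole strategy of Caruso--Wiest and Cumplido--Wiest as adapted in \cite{choi2021generic}, I would partition a word of length $R$ into $\lfloor \sqrt{R} \rfloor$ consecutive slots; a non-Morse element can fail the pivot condition in at most a bounded number of slots, so bounding the number of such bad words against $\#B_S(R)$ delivers the $C/\sqrt{R}$ rate. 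Finally, the identification of Morse elements in $\operatorname{Cay}(G, S)$ with rank-1 elements of $G$, established by Charney--Sultan and Cordes, converts the Morse-genericity statement into the rank-1 genericity claimed in the theorem.

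The main obstacle is the gap between the metric on $X$, where strong contraction is available, and the Cayley metric $d_S$, which is where we are forced to count. Because the orbit map $G \to X$ is only coarsely Lipschitz and can collapse large $d_S$-balls into small regions of $X$, strong contraction on $X$ transfers only to \emph{weak} contraction on $\operatorname{Cay}(G, S)$; the pivot events detected on $X$ must therefore be promoted to pivot events in $\operatorname{Cay}(G, S)$ in a way that is uniform over all choices of $S$. Handling this distortion is precisely what the weakly-contracting machinery of the paper is designed for, and it is also the source of the $\sqrt{R}$-loss relative to the exponential decay available in the purely hyperbolic or strongly contracting setting.
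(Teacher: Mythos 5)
Your reduction of Theorem~\ref{thm:CAT(0)main} to the weakly contracting framework is the right high-level move, and the diagnosis of the orbit-map distortion from $X$ to $\operatorname{Cay}(G,S)$ as the central obstruction is on point. However, the counting argument you propose does not close the gap, and the mechanism you offer for the $C/\sqrt{R}$ rate is not the one that actually works.

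First, a structural point: the paper does not need a Schottky pair $h_1,h_2$. It fixes a \emph{single} rank-1 element $\phi$, which is strongly contracting on the CAT(0) space $X$ by Bestvina--Fujiwara (Proposition~\ref{prop:CAT(0)contracting}); the WPD input (the analogue of Fact~\ref{fact:pseudoAnosov}) comes for free from proper discontinuity of the geometric action. Morseness enters only as a corollary via strongly contracting $\Leftrightarrow$ rank-1, so the Charney--Sultan/Cordes detour is unnecessary.

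Second, and more seriously, the pigeonhole step is either backwards or unsubstantiated. If having at least $K$ good pivot positions forces $g$ to be Morse, then a non-Morse $g$ has \emph{fewer than} $K$ good slots, i.e.\ it \emph{fails} the pivot condition in all but a bounded number of the $\lfloor\sqrt{R}\rfloor$ slots --- the opposite of your claim. Even with the sign fixed, "failing at almost all slots is rare" does not by itself produce a $1/\sqrt{R}$ decay; one needs a quantitative injectivity statement. The paper's source of $\sqrt{R}$ is entirely different: it constructs a replacement map $F_n$ that splices a $\varphi$-orbit segment into a geodesic word at a single position $i\in\{0.27n,\ldots,0.28n\}$, and then proves (Lemma~\ref{lem:almostInj}) that $F_n$ is $O(\sqrt{n})$-to-1. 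That almost-injectivity hinges on Corollary~\ref{cor:weakConcatSqrt}, which gives a \emph{quadratic} lower bound $d_S(h_1,h_2)\gtrsim N^2$ whenever $N$ aligned contracting segments sit between a point of $[h_1,h_2]_S$ and both endpoints. This quadratic estimate, in turn, depends on the exponential-decay concatenation lemma for weakly contracting geodesics (Proposition~\ref{prop:weakConcat2}), which is the key technical tool the weakly contracting setting demands and which your sketch does not invoke. Without an argument of this type, the $1/\sqrt{R}$ rate is not established, and simply appealing to Caruso--Wiest/Cumplido--Wiest style pigeonholing (which only yields a lower bound on the pseudo-Anosov density, not genericity) does not fill the hole.

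Finally, one small correction: the rate in the CAT(0) case is actually superpolynomial, $1/R^{k/2}$ for every $k$ (Theorem~\ref{thm:CAT(0)}), by iterating the replacement at multiple positions; the $1/\sqrt{R}$ in Theorem~\ref{thm:CAT(0)main} is the $k=1$ special case, not an artifact of distortion that cannot be improved.
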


The conclusion of Theorem \ref{thm:CAT(0)main} holds regardless of the choice of the finite generating set $S$. This makes the conclusion more natural because there is \emph{a priori} no canonical choice of $S$ for a CAT(0) group $G$. For right-angled Artin and Coxeter groups, I. Gekhtman, S. Taylor and G. Tiozzo proved in \cite{gekhtman2020counting} the genericity of loxodromics with respect to the standard vertex generating set, which is related to the existence of a geodesic automatic structure. With a different approach, we generalize Gekhtman-Taylor-Tiozzo's result by allowing all finite generating sets.

Our approach relies on the weakly contracting property of pseudo-Anosovs in $\Mod(\Sigma)$ \cite{duchin2009divergence}, \cite{behrstock2006asymptotic}. Since these phenomena are also observed in non-elementary hierarchically hyperbolic groups (HHGs) \cite{abbott2021largest}, our method also gives a version of Theorem \ref{thm:main} for HHGs. Surprisingly, using the recent work of A. Goldsborough, M. Hagen, H. Petyt and A. Sisto \cite{goldsborough2023induced}, we also obtain the genericity of Morse elements in groups that are quasi-isometric to \emph{well-behaved} HHGs (see Subsection \ref{subsection:HHG}).

\begin{theorem}\label{thm:HHGmain}
Let $G$ be a group quasi-isometric to a well-behaved HHG $H$ containing Morse elements (e.g., $H$ is a rank-1 special CAT(0) cubical group or a fundamental group of a non-geometric graph 3-manifold). Let $S$ be any finite generating set of $G$. Then there exists $K>0$ such that \[
\frac{\# \{g \in B_{S}(R) : \textrm{$g$ is not Morse} \} }{\#B_{S}(R)} \le \frac{K}{\sqrt{R}}
\]
holds for all $R>0$. In particular, Morse elements are generic in every Cayley graph of $G$.
\end{theorem}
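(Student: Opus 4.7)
The plan is to reduce Theorem \ref{thm:HHGmain} to the HHG analogue of Theorem \ref{thm:main} (alluded to just before the statement and developed earlier in the paper for non-elementary well-behaved HHGs with Morse elements), by transferring the HHG structure from $H$ across the quasi-isometry to $G$.

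First I would invoke the result of Goldsborough--Hagen--Petyt--Sisto \cite{goldsborough2023induced}: because $G$ is quasi-isometric to the well-behaved HHG $H$, the group $G$ itself inherits a well-behaved HHG structure (the ``induced'' structure), whose hyperbolic coordinate space is quasi-isometric to that of $H$. Since the property of being a Morse element is a quasi-isometry invariant of a group element, the Morse elements of $H$ correspond under the quasi-isometry to Morse elements of $G$, and (by the HHG description of Morse elements from \cite{abbott2021largest}) these act on the top-level hyperbolic coordinate of the induced structure with weakly contracting axes. In particular, $(G, \text{induced HHS})$ is a non-elementary well-behaved HHG containing Morse elements.

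Next I would apply the HHG version of Theorem \ref{thm:main} directly to $G$ equipped with its induced well-behaved HHG structure and with an arbitrary finite generating set $S$. The whole point of the counting machinery developed in this paper is that its output is phrased in terms of the Cayley graph $\mathrm{Cay}(G, S)$ for \emph{any} $S$; the HHG structure enters only through the weakly contracting axes of Morse elements acting on the hyperbolic coordinate, not through any preferred Cayley metric. Applying the HHG counting theorem therefore yields the asserted bound $\frac{C}{\sqrt{R}}$ on the proportion of non-Morse elements in $B_S(R)$.

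The main obstacle, and essentially the only non-automatic step, is the first one: ensuring that the HHG structure on $G$ produced by \cite{goldsborough2023induced} is ``well-behaved'' in precisely the technical sense required by the HHG counting theorem (for instance, the relevant unbounded products / bounded-domain dichotomy hypotheses, non-elementarity of the action on the top-level hyperbolic space, and compatibility of Morse elements with the weakly contracting axis formalism). A tempting shortcut --- transferring the counting estimate itself from $H$ to $G$ along the quasi-isometry --- is not available, since a quasi-isometry distorts ball cardinalities and thus does not preserve the genericity ratio; so one genuinely has to do the counting inside $\mathrm{Cay}(G, S)$, which is why having the HHG structure on $G$ itself (not just on $H$) is essential.
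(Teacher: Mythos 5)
Your overall plan matches the paper's: transfer structure across the quasi-isometry and then run the Cayley-graph counting machinery on $G$ directly, never on $H$. Your closing observation---that transferring the counting estimate itself along the QI fails because ball cardinalities are distorted---is exactly right and is the reason the argument must be done this way.

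However, your first step overstates what the transfer delivers, and the overstatement is not a small one given the context of the paper. You assert that $G$ \emph{inherits a well-behaved HHG structure} from $H$. But as the paper itself notes (Subsection \ref{subsection:context}), while being an HHS is preserved under quasi-isometries, being an HHG is not: the group action need not remain compatible with the transferred HHS coordinates, and whether HHG-ness is a QI-invariant is precisely the open question the paper is working around. What the paper actually extracts from \cite{goldsborough2023induced} is weaker and is stated as Lemma \ref{lem:HHG1}: $G$ acts on \emph{some} Gromov hyperbolic space $X$ with a weakly contracting (by means of $X$) WPD loxodromic $\varphi \in G$. Note that the paper cites ``the proof of'' Corollary 7.5 of \cite{goldsborough2023induced}, not its statement---the needed objects are dug out of the construction (the quasi-isometry $q : Y \to X$, the relation $d_X(g \cdot q(y), q((fg)\cdot y)) \le K'$), and the weak contraction of $\{\varphi^i\}$ is then established by transporting the contraction of the image path $\kappa = \{f(\varphi^i) y_0\}$ in $Y$ via Corollary 6.1--6.2 of \cite{abbott2021largest}, rather than by invoking a pre-existing HHG structure on $G$.

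This distinction does not sink your argument, because the counting machinery (Fact \ref{fact:pseudoAnosov}, Proposition \ref{prop:duchin}, and the constructions of Sections \ref{section:align}--\ref{section:superpolynomial}) only ever uses the two ingredients that Lemma \ref{lem:HHG1} supplies---a hyperbolic $G$-space with a WPD, weakly contracting loxodromic---so once you replace ``$G$ is an induced well-behaved HHG, apply Theorem \ref{thm:HHG}'' by ``$G$ satisfies the hypotheses of the abstract counting machinery via Lemma \ref{lem:HHG1}'', the proof is correct and coincides with the paper's (giving Theorem \ref{thm:HHG2}, which specializes to Theorem \ref{thm:HHGmain}). But as written, your first step asserts a QI-invariance of HHG-ness that is not known and not needed; you should instead transfer only the hyperbolic action and the WPD/weakly-contracting element.
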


A version of Theorem \ref{thm:CAT(0)main} holds for many other groups, namely, groups acting geometrically on a metric space with contracting elements. A notable example includes Garside groups \cite{calvez2021morse}. We record one particular case, which is Corollary \ref{cor:braidGen}:

\begin{theorem}\label{thm:braidMainThm}
Let $G$ be a braid group with at least 3 strands. Then with respect to any generating set $S$, pseudo-Anosov braids are generic in $G$ with respect to $d_{S}$.
\end{theorem}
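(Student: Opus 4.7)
My plan is to deduce Theorem~\ref{thm:braidMainThm} from the general counting framework behind Theorem~\ref{thm:main} and its extensions to groups acting geometrically on metric spaces with contracting elements, as indicated in the paragraph preceding the statement. The framework applies to any non-virtually-cyclic group with a geometric action on a metric space containing a weakly contracting isometry, and delivers the $C/\sqrt{R}$ estimate on the complement of the weakly-contracting-axis elements in every word metric.

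For $n \ge 3$ I would pair $B_n$ with one of two equivariant targets. The first is via the mapping class group identification: $B_n \cong \Mod(D_n)$ (fixing the boundary pointwise), and boundary-capping yields a short exact sequence $1 \to Z(B_n) \to B_n \to H \to 1$ with $Z(B_n) = \langle \Delta^{2}\rangle$ infinite cyclic and $H$ of finite index in $\Mod(\Sigma_{0,n+1})$; since $n+1 \ge 4$, the target $\Sigma_{0,n+1}$ is a finite-type hyperbolic surface other than the thrice-punctured sphere, so Theorem~\ref{thm:main} applies to $H$. The second is the theorem of Calvez--Wiest \cite{calvez2021morse}, which exhibits weakly contracting axes for pseudo-Anosov braids directly in the Garside Cayley graph of $B_n$, giving a proper cocompact action of $B_n$ on a metric space with the required contracting element. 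In either realization, a braid acts with a weakly contracting axis if and only if it is pseudo-Anosov, using the Nielsen--Thurston classification to rule out periodic braids (finite order modulo the center) and reducible braids (which preserve a multicurve and hence have bounded orbits on the relevant curve complex).

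Once this data is in place, feeding it into the general counting machinery outputs the statement immediately: pseudo-Anosov braids are generic in $(B_n, d_S)$ for every finite generating set $S$, with the $C/\sqrt{R}$ quantitative bound inherited from the framework. The main obstacle I anticipate concerns the kernel of the action when working through the curve complex: $Z(B_n)$ is infinite cyclic and acts trivially, so strictly speaking one must argue that the $C/\sqrt{R}$ estimate survives pulling back from $H$ to $B_n$. Since $Z(B_n)$ has linear growth in any word metric while $B_n$ grows exponentially, the central cosets contribute negligibly and the estimate does push down after some bookkeeping. I expect the cleanest route to avoid this bookkeeping altogether by using the Garside Cayley graph as the target, where the $B_n$-action is free, so that Calvez--Wiest's contraction statement feeds directly into the framework and yields Theorem~\ref{thm:braidMainThm}.
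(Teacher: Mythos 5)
Your framing is right — this should follow from the counting framework for groups acting geometrically on a space with a contracting isometry — but your ``cleanest route'' contains a genuine error. Calvez--Wiest's theorem (Proposition~\ref{prop:calvez1} in the paper) establishes strong contraction of pseudo-Anosov axes in the Garside Cayley graph of the \emph{quotient} $G/C(G)$, not in the Cayley graph of $B_n$ itself. This is unavoidable: the central element $\Delta^2$ commutes with every braid, so for any $g$ the subgroup $\langle g, \Delta^2\rangle$ is (virtually) $\mathbb{Z}^2$, and no element of $B_n$ can be Morse or weakly contracting in any Cayley graph of $B_n$ — the paper says so explicitly when motivating this corollary. So there is no free $B_n$-action on a Garside Cayley graph with a contracting element, and the shortcut you propose to ``avoid the bookkeeping altogether'' is not available. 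The bookkeeping with the center is essential, not a nuisance to be engineered around.

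The paper's actual route is therefore close to what your Option~2 \emph{should} have been. One applies Theorem~\ref{thm:StrongResult} to the quotient $G/C(G)$ (which is not virtually cyclic), equipped with any word metric $d_S$ pushed down from $G$, acting geometrically on $(G/C(G), d_{S_{Gar}})$; Calvez--Wiest plus the elliptic behavior of periodic and reducible braids on the additional length complex (Proposition~\ref{prop:calvez2}) identifies the strongly contracting elements of the quotient with the pseudo-Anosov classes. This gives Corollary~\ref{cor:braid}, a genericity statement for cosets. One then lifts to $G$ exactly by the argument you sketched for your Option~1: $C(G) = \langle\Delta^2\rangle$ is undistorted (the exponent-sum homomorphism $\rho : G \to \mathbb{Z}$ witnesses this, since $\rho(\Delta^2)\neq 0$), so each coset meets $B_S(R)$ in at most $O(R)$ elements, and because the superpolynomial rate $1/R^{k/2}$ is available for every $k$, absorbing this linear factor still yields genericity. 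Your Option~1 (capping the disk boundary to land in a finite-index subgroup of $\Mod(\Sigma_{0,n+1})$) would require the same pullback bookkeeping, and one should be careful that Theorem~\ref{thm:main} as stated concerns $\Mod(\Sigma)$ rather than finite-index subgroups — you would want the more general HHG or strongly-contracting version — but in spirit that route is workable. The error to correct is the belief that the Garside picture lets you bypass the center.
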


Previously, Caruso and Wiest proved that pseudo-Anosov braids are generic with respect to the standard generating set \cite{caruso2017on-the-genericity}. We note that braid groups contain a non-identity central element that commutes with every element. Hence, no element is Morse in braid groups. Still, we can apply our method about weakly contracting elements to deduce that elements of a certain type are generic with respect to any generating set. We find this application interesting.

\subsection{Context and related results}\label{subsection:context}

The mapping class group $\Mod(\Sigma)$ acts on three metric spaces: the Teichm{\"u}ller space $\mathcal{T}(\Sigma)$, the curve complex $\mathcal{C}(\Sigma)$, and the Cayley graphs of itself. Among these three spaces, counting problems make sense in the first and the third spaces because the associated $\Mod(\Sigma)$-actions are proper actions. Regarding the first space, J. Maher proved that pseudo-Anosovs are generic in Teichm{\"u}ller metric balls \cite{maher2010asymptotics}. Later, this was generalized to a wide range of spaces and group actions by W. Yang \cite{yang2020genericity}. Yang's theory is based on the strongly contracting property of pseudo-Anosovs in Teichm{\"u}ller space, rank-1 elements in CAT(0) spaces and loxodromics in the Cayley graphs of relatively hyperbolic groups. There has been a great deal of developments in the growth theory of groups with strongly contracting elements; for example, see \cite{arzhantseva2015growth}, \cite{yang2019statistically}, \cite{yang2020genericity}, \cite{chawla2023genericity} and \cite{coulon2022patterson}.

Meanwhile, to the best of the author's knowledge, there is no known pair of a word metric $d$ on $\Mod(\Sigma)$ and a pseudo-Anosov mapping class that is strongly contracting with respect to $d$. Hence, the theory of strongly contracting elements does not directly imply the genericity of pseudo-Anosovs in the Cayley graphs. Also, the strongly contracting property has a limitation, that it is not preserved under a quasi-isometry of the ambient space \cite{arzhantseva2019negative}. Hence, the genericity of strongly contracting elements in a group $G$ acting on a metric space $X$ does not imply the corresponding genericity on another space $Y$ quasi-isometric to $X$.

A weaker notion, the weakly contracting property, is preserved under quasi-isometries. Moreover, pseudo-Anosovs are indeed weakly contracting with respect to the Cayley graphs of $\Mod(\Sigma)$ \cite{duchin2009divergence}. Hence, if one can develop the growth theory of groups having weakly contracting elements (innately, i.e., with respect to the word metrics), then the genericity of weakly contracting elements for all word metrics follows at once. This is indeed our ultimate goal.

Our approach to Theorem \ref{thm:main} requires another geometric ingredient, namely, the WPD property of pseudo-Anosovs on $\mathcal{C}(\Sigma)$. Although the group action is not passed through quasi-isometries between groups in general, we prove Theorem \ref{thm:HHGmain} for groups quasi-isometric to HHGs using the recent result by Goldsborough, Hagen, Petyt, and Sisto in \cite{goldsborough2023induced}.

In general, counting problems about groups acting on a metric space are not QI-invariant. That means, when a property $P$ is generic in a group $G$ with respect to a word metric, it might well happen that $P$ is not generic in another group $H$ quasi-isometric to $G$, or even in $G$ with respect to another word metric. An illuminating example is given in \cite[Example 1]{gekhtman2018counting}. For the action of $G = F_{2} \times F_{3}$ on $X =(\textrm{Cayley graph of $F_{2}$})$, with the left factor acting faithfully and the right factor acting trivially, there exist two different finite generating sets $S_{1}, S_{2}$ of $G$ such that loxodromic isometries of $X$ are generic in $G$ with respect to $S_{1}$, while the stabilizers of $id \in X$ have density bounded away from $0$ with respect to $S_{1}$, rendering loxodromics not generic.

The QI-invariant theory of counting problems has been mostly discussed in the setting of hyperbolic groups and groups that admit geodesic coding structures (\cite{gekhtman2018counting} and \cite{gekhtman2020counting}). In particular, with respect to any finite generating set, loxodromic elements are generic in non-elementary hyperbolic groups and relatively hyperbolic groups. Our method gives the corresponding result for a class of groups that is closed under quasi-isometries, which seems to be the first one beyond hyperbolic and relatively hyperbolic groups.

We note that a QI-invariant random walk theory is a companion challenging goal, which was achieved recently by A. Goldsborough and A. Sisto for a variety of groups \cite{goldsborough2021markov} (see also \cite{chawla2023random} for a hyperbolic action-free version) and for many HHGs by the authors of \cite{goldsborough2023induced}. 

\subsection{Strategy}\label{subsection:strategy}

We now informally explain our strategy for Theorem \ref{thm:main}. Let us consider the simplest example of a hyperbolic group, namely, the free group $G:= \langle a, b, c \rangle$. With respect to the standard generating set $S=\{a, b, c\}$, for any finite threshold $T>0$, a generic element of $G$ is loxodromic and its translation length is greater than $T$. One way to see this is as follows. For a geodesic word $g = a_{1} a_{2} \cdots a_{n}$ of length $n$ to have translation length less than $T$, it must be that $a_{i} = a_{n-i}^{-1}$ for $i=1, \ldots, (n-T)/2$. In this case, replacing one of $a_{i}$ with another letter from $S$ converts $g$ into a loxodromic with large translation length. 

More explicitly, with $g$ and $0 < i < (n-T)/2$ as input, let us declare an operation $F^{(1)} : (g, i) \mapsto F^{(1)}(g, i) \in G$ by $a_{1} a_{2} \cdots a_{n} \mapsto a_{1} \cdots a_{i-1} \bar{a}_{i} a_{i+1} \cdots a_{n}$, where $\bar{a}_{i} \neq a_{i}$.  If $g$ has translation length less than $T$, then the initial subsegments of $[id, g]$ and $[id, g^{-1}]$ are assumed to be the same. Meanwhile, $[id, g]$ and $[id, F^{(1)}(g, i)]$ are designed to diverge at distance $i$ from $id$, whereas $[id, g^{-1}]$ and $[id, F^{(1)}(g, i)^{-1}]$ are designed to diverge at distance $n-i$ from $id$, which is farther than $i$. In conclusion, $[id, F^{(1)}(g, i)]$ and $[id, F^{(1)}(g, i)^{-1}]$ diverge at distance $i$ from $id$, making $F^{(1)}(g, i)$ a loxodromic with translation length $n-2i > T$.

We then note that $F^{(1)}$ is an almost-injective map. To see this, suppose $F^{(1)}(g, i) = F^{(1)}(g', i')$. Since the indices $i$ and $i'$ can be read off from the resulting translation length, we deduce $i=i'$. Then, the beginning and the ending subwords of $g$ and $g'$ are forced to match, and we conclude that $g = g'$ modulo the $i$-th letter. This explains that $F^{(1)}$ is at most $6$-to-$1$. In summary, \begin{enumerate}
\item the domain of $F^{(1)}$ is $\{ g \in B_{S}(n) : \tau_{S}(g) \le T\} \times \{1, \ldots, \frac{n-T}{2} \}$,
\item $F^{(1)}$ sends $\operatorname{Dom} F^{(1)}$ to $B_{S}(n) \setminus \{ g \in B_{S}(n) : \tau_{S}(n) \le T\}$, and 
\item $F^{(1)}$ is at most $6$-to-1. 
\end{enumerate}
Combining these, we obtain that \[
\begin{aligned}
\#B_{S}(n) &\ge \#\{ g \in B_{S}(n) : \tau_{S}(n) \le T\}  + \# \operatorname{Im}(F^{(1)})  \\
&\ge 
\left( 1 + \frac{n-T}{12} \right) \# \{g \in B_{S}(n) : \tau_{S}(n) \le T \}.
\end{aligned}
\]

We can also define a double replacement operation $F^{(2)} : a_{1} \cdots a_{n} \mapsto a_{1} \cdots a_{i-1} \bar{a}_{i} a_{i+1} \cdots a_{j-1} \bar{a}_{j} a_{j+1} \cdots a_{n}$, where $\bar{a}_{i} \neq a_{i}$ and $\bar{a}_{j} \neq a_{j}$. We observe that: \begin{enumerate}
\item the domain of $F^{(2)}$ is  $\{g \in B_{S}(n) : \tau_{S}(g) \le T\} \times \{ (i, j) : 0 < i< j< (n-T)/2\}$,
\item $F^{(2)}$ sends $\operatorname{Dom} F^{(2)}$ to $B_{S}(n) \setminus \{ g \in B_{S}(n) : \tau_{S}(n) \le T\}$, and 
\item $F^{(2)}$ is at most $6^{2}$-to-1. 
\end{enumerate}

Moreover, the images of $F^{(1)}$ and $F^{(2)}$ are disjoint. Indeed, if $b_{1} \cdots b_{n}$ is the geodesic representative of an element of $\operatorname{Im} (F^{(l)})$, then $\# \{ 1 \le i \le (n-T)/2 : b_{i} \neq b_{n-i}^{-1}\} = l$. This condition is mutually exclusive for distinct values of $l$. These observations imply that \[
\begin{aligned}
\#B_{S}(n) &\ge \#\{ g \in B_{S}(n) : \tau_{S}(n) \le T\}  + \# \operatorname{Im}(F^{(1)}) + \# \operatorname{Im}(F^{(2)}) \\
&\ge 
\left( 1 + \frac{n-T}{12} + \frac{1}{6^{2}} \binom{(n-T)/2}{2}\right) \# \{g \in B_{S}(n) : \tau_{S}(n) \le T \}.
\end{aligned}
\]

Continuing this logic, we obtain an estimate \[
\left( 1 + \frac{n-T}{12}  + \frac{1}{6^{2}}\binom{(n-T)/2}{2} + \cdots + \frac{1}{6^{(n-T)/2}} \binom{(n-T)/2}{(n-T)/2} \right) \# \{g \in B_{S}(n) : \tau_{S}(n) \le T \} \le \# B_{S}(n).
\]
It follows that elements with large translation length are exponentially generic in $G$.

This strategy is applicable to a much wider class of groups. Indeed, Yang implemented this strategy in \cite{yang2020genericity} for hyperbolic groups and beyond, namely, groups with strongly contracting elements.  However, the theory does not immediately apply to groups with weakly contracting elements (cf. Definition \ref{dfn:weakContract}), such as mapping class groups, for the following reason. Let $S$ be a generating set of $\Mod(\Sigma)$. For mapping classes $g, f, h$ and a (long enough) pseudo-Anosov $\varphi$, let us say $(g, f, f\varphi, h)$ is ``aligned" if the sequence of points $(gx_{0}, fx_{0}, f\varphi x_{0}, hx_{0})$ are ``aligned" on $\mathcal{C}(\Sigma)$ (see Definition \ref{dfn:align} for the precise definition). In such a configuration, we cannot deduce that the word metric geodesic $[g, h]_{S}$ passes through $f$ or $f\varphi$; we can only conclude that $[g, h]_{S}$ passes ``nearby" $f$ and $f\varphi$, at word metric distance $\epsilon (d_{S}(g, f) + d_{S}(f, h))$, for a uniform $\epsilon>0$ depending only on $\varphi$ and not on $g, f$ or $h$. 

To see how it affects the strategy, let $g \in \Mod(\Sigma)$ be a non-pseudo-Anosov mapping class, and let $a_{1} a_{2}\cdots a_{n}$ be a $d_{S}$-geodesic representative of $g$. Let us perform the operation \[
g = a_{1}a_{2} \cdots a_{n} \mapsto a_{1} a_{2} \cdots a_{i-1} \varphi(g) a_{i+1} \cdots a_{n} =: F(g, i)
\]
for some pseudo-Anosov mapping class $\varphi(g)$ such that $\gamma(g, i) :=a_{1} a_{2} \cdots a_{i-1} [id, \varphi(g)]_{S}$ is in between $id$ and $F(g, i)$, i.e., $\big(id,\, \gamma(g, i), \, F(g, i)\big)$ is aligned when seen on $\mathcal{C}(\Sigma)$. We then have two scenarios: either $F(g, i)^{-1}$ is  on the left side of $\gamma$, i.e., $\big(F(g, i)^{-1}, \,\gamma(g, i)\big)$ is aligned on $\mathcal{C}(\Sigma)$, or $\big(id, \,\eta,\, F(g, i)^{-1}\big)$ is aligned for a long enough initial subsegment $\eta$ of $\gamma(g, i)$. In the former case, one can deduce that the $F(g, i)$-orbit of $\gamma$ makes positive progress on $\mathcal{C}(\Sigma)$ and that $F(g, i)$ is pseudo-Anosov. The latter case is not probable if we know that $[id, F(g, i)^{-1}]_{S}$ passes through $\gamma$, because such a situation depends on the coincidence that $a_{1} \cdots a_{i-1}$ and $a_{n}^{-1} \cdots a_{n-i+1}^{-1}$ are the same mapping class. This would be true if $\gamma(g, i)$ and its subsegment $\eta$ were strongly contracting. But given that $\gamma(g, i)$ is weakly contracting, we can only conclude that $[id, F(g, i)^{-1}]_{S}$ passes nearby $\gamma(g, i)$, at distance proportional to $d_{S}(id, \gamma(g, i))$ and $d_{S}(\gamma(g, i), F(g, i)^{-1} )$. For this reason, we can show that the latter case is not probable only when $i$ is comparable to $n$, say, greater than $0.2n$.

Next, to discuss the injectivity of $F$, let us consider the situation that $F(g, i) = F(g', i') =: U$. We would then want to say that both $a_{1} a_{2} \cdots a_{i}$ and $a_{1}' a_{2}'\cdots a_{i'}'$ are on the same geodesic $[id, U]_{S}$ at the same distance from $id$, hence are the same mapping classes. However, in general, $[id, U]_{S}$ does not pass through $a_{1} a_{2} \cdots a_{i}$ and $a_{1}' a_{2}' \cdots a_{i}'$ but is only $\epsilon n$-close to them. This results in an error of order $\lambda^{\epsilon n}$ for the injectivity, where $\lambda$ is the growth rate of the mapping class group, which is fatal.

We get around this issue using WPD property and weakly contracting property of pseudo-Anosov axes. Suppose first that $F(g, i) = F(g', i') =: U$ and $d_{\mathcal{C}}(x_{0}, gx_{0}) =d_{\mathcal{C}}(x_{0}, g'x_{0})$ up to a bounded error. Then $\gamma(g, i)$ and $\gamma(g', i')$ are fellow traveling with the same subsegment of $[x_{0}, Ux_{0}]$, when seen on $\mathcal{C}(\Sigma)$. The WPD property of pseudo-Anosov mapping classes forces that the initial and the terminal subwords of $g$ and $g'$ are almost identical. Hence, $g$ and $g'$ are identical up to finitely many choices.

The issue is now reduced to the situation that $\gamma(g, i)$ and $\gamma(g', i')$ are not fellow traveling with each other but are fellow traveling with disjoint subsegments of $[x_{0}, Ux_{0}]_{\mathcal{C}}$, when seen on $\mathcal{C}(\Sigma)$. This is the case when $(id, \gamma(g, i) ,\gamma(g', i'), U)$ are ``aligned". We deal with this situation using a concatenation lemma for weakly contracting geodesics (Proposition  \ref{prop:weakConcat2}). Consider mapping classes $g, h$ and cobounded geodesics $\gamma_{1}, \ldots, \gamma_{k}$ in $\Mod(\Sigma)$ such that $(g, \gamma_{1}, \ldots, \gamma_{k}, h)$ is aligned. We will see that Proposition  \ref{prop:weakConcat2} implies\begin{equation}\label{eqn:lowerBd}
\sum_{i=1}^{k} d_{S}(\gamma_{i}, [g, h]_{S}) \lesssim d_{S}(g, h).
\end{equation}
Roughly speaking, this is because the error coming from $d_{S}(g, \gamma_{1})$ does not propagate along $(\gamma_{1}, \gamma_{2}, \ldots)$, but decays exponentially. See Figure \ref{fig:weakConcat2} for schematics.

Now take large enough $K$. Suppose that $F(g_{1}, i_{1}) = F(g_{2}, i_{2}) = \ldots = F(g_{K\sqrt{n}}, i_{K\sqrt{n}})=: U$ and $(id, \gamma(g_{1}, i_{1}), \ldots, \gamma(g_{K\sqrt{n}}, i_{K\sqrt{n}}), U)$ is aligned. We have two cases: \begin{enumerate}
\item $(id, \gamma(g_{1}, i_{1}), \ldots, \gamma(g_{K\sqrt{n}}, i_{K\sqrt{n}}))$ and $(g_{K\sqrt{n}},   \gamma(g_{1}, i_{1}), \ldots, \gamma(g_{K \sqrt{n}}, i_{K \sqrt{n}}))$ are both aligned, or 
\item $(id, \eta, g_{K\sqrt{n}})$ is aligned for some long enough subsegment $\eta$ of $\gamma(g_{1}, i_{1})$, which is cobounded.
\end{enumerate}
In the first scenario, thanks to the bound in Display \ref{eqn:lowerBd}, we deduce that$[id, \textrm{beginning point of $\gamma(g_{K\sqrt{n}}, i_{K\sqrt{n}})$}]_{S}$ and $[\textrm{beginning point of $\gamma(g_{K\sqrt{n}}, i_{K\sqrt{n}})$}, g_{K\sqrt{n}}]_{S}$ are both $o(\sqrt{n})$-close to $g_{j}$ for some $j \in \{1, \ldots, 0.1K\sqrt{n}\}$. Recall that both of these geodesics are subsegments of $[id, g_{K\sqrt{n}}]_{S}$; this is possible only when $g_{j}$ and $\gamma(g_{K\sqrt{n}}, i_{K\sqrt{n}})$ are $o(\sqrt{n})$-near in $\Mod(\Sigma)$ and hence near when seen in $\mathcal{C}(\Sigma)$. But this contradicts the alignment of $(\gamma(g_{j}, i_{j}), \ldots, \gamma(g_{K \sqrt{n}}, i_{K \sqrt{n}}))$. We are led to the second scenario, when $g_{K\sqrt{n}}$ is pseudo-Anosov except for exponentially negligible cases. In conclusion, (on a suitable domain,) $F$ is $O(\sqrt{n})$-to-1 (Lemma \ref{lem:almostInj}). This leads to the genericity of pseudo-Anosovs.

The ingredient for this strategy is a weakly contracting element with WPD property. Such an element can be found in HHGs and rank-1 CAT(0) groups, and we will review them in Section \ref{section:application}. Also, by replacing several letters instead of a single one, we can improve the rate of genericity. This turns out to be useful when studying the genericity of pseudo-Anosov braids in braid groups.

\subsection*{Acknowledgments}
The author thanks Kunal Chawla, Ilya Gekhtman, D{\' i}dac Mart{\' i}nez-Granado, Kasra Rafi, Alessandro Sisto, Giulio Tiozzo, Wenyuan Yang and Abdul Zalloum for helpful discussions. In particular, the author is grateful to Abdul Zalloum for critical comments regarding QI-invariance of the theory of counting problems. The author thanks the anonymous referee for critical comments and corrections.

The author was supported by Mid-Career Researcher Program (RS-2023-00278510) through the National Research Foundation funded by the government of Korea, and by a KIAS Individual Grant (SG091901) via the June E Huh Center for Mathematical Challenges at KIAS. This work was completed while the author was participating in the program ``Randomness and Geometry" at the Fields Institute under the support of the Marsden Postdoctoral Fellowship. The author thanks the Fields Institute and the organizers of the program for their hospitality.

\section{Preliminaries}\label{section:prelim}

In this section, we gather some necessary notions and facts about $\Mod(\Sigma)$ and make some choices.

We work in several geodesic metric spaces. Here, geodesics are always oriented and parametrized by length. Subsegments are always oriented  with the orientation of the ambient geodesic. Given a geodesic $\gamma$ parametrized by an interval $I$, we say that $p=\gamma(i) \in \gamma$ appears earlier than $q = \gamma(j) \in \gamma$ if $i<j$. In this case, we denote the subsegment of $\gamma$ between $p$ and $q$ by $[p, q]_{\gamma}$.

By abuse of notation, we often identify geodesics with their images. For example, given a geodesic $\gamma : I \rightarrow X$ in a metric space $X$, we write $p \in \gamma$ when we mean $p \in \operatorname{Im} \gamma$. We similarly identify sequences with their images.

We fix a finite generating set $S$ of $\Mod(\Sigma)$. The word metric $d_{S}$ with respect to $S$ is defined by \[
d_{S}(g, h) := \min \left\{ n \in \Z_{\ge 0} : \begin{array}{c} \textrm{$\exists \,a_{1}, a_{2}, \ldots, a_{n} \in S, \,\epsilon_{1},\epsilon_{2}, \ldots, \epsilon_{n} \in \{1, -1\}$} \\  \textrm{such that $g^{-1} h = a_{1}^{\epsilon_{1}} a_{2}^{\epsilon_{2}} \cdots a_{n}^{\epsilon_{n}}$}.\end{array}\right\}
\]
We use the notation for the word norm $\|g\|_{S} := d_{S}(id, g)$. We define \[
\begin{aligned}
B_{S}(R) &:= \big\{g \in \Mod(\Sigma) : d_{S}(id, g) \le R\big\}.
\end{aligned}
\]
We denote by $[g, h]_{S}$ an arbitrary $d_{S}$-geodesic between $g, h \in \Mod(\Sigma)$.

Let $(\mathcal{C}(\Sigma), d_{\Sigma})$ be the curve complex equipped with the graph metric on $\mathcal{C}(\Sigma)$, which is $\delta$-hyperbolic for some $\delta>0$  \cite{masur1999curve}. Given $x, y, z \in X$, we define the Gromov product between $x$ and $y$ based at $z$ by \[
(x, y)_{z} := \frac{1}{2} \Big( d(x, z) + d(z, y) - d(x, y) \Big).
\] 
For $x, y \in \mathcal{C}(\Sigma)$, we denote by $[x, y]_{\mathcal{C}}$ an arbitrary $d_{\mathcal{C}}$-geodesic between $x$ and $y$.

It is known by B. Bowditch \cite{bowditch2008tight} that there exists a pseudo-Anosov mapping class $\varphi$ that preserves a bi-infinite $d_{\mathcal{C}}$-geodesic $\Gamma(\varphi)$. Let us pick the basepoint $x_{0} \in \mathcal{C}(\Sigma)$ on $\Gamma(\varphi)$. (This choice is not necessary but reduces several constants.) We define the nearest point projection $\pi_{A}(x)$ of a point $x \in \mathcal{C}(\Sigma)$ onto a subset $A \subseteq \mathcal{C}(\Sigma)$ as follows: \[
y \in \pi_{A}(x) \,\, \Leftrightarrow \,\, d_{\mathcal{C}}(x, y) = \inf \{ d(x, p) : p \in A\}.
\]
We denote the orbit map by $\Proj$, that means, $\Proj : \Mod(\Sigma) \rightarrow \mathcal{C}(\Sigma)$ is defined by $g \mapsto gx_{0}$. We denote the translation length of $g \in \Mod(\Sigma)$ with respect to $d_{S}$ and $d_{\mathcal{C}}$ by $\tau_{S}$ and $\tau_{\mathcal{C}}$, respectively: \[
\tau_{S}(g) := \liminf_{n \rightarrow \infty} \frac{1}{n}d_{S}(id, g^{n}),\quad \tau_{\mathcal{C}}(g) := \liminf_{n \rightarrow \infty} \frac{1}{n} d_{\mathcal{C}}(x_{0}, g^{n} x_{0}).
\]

Let $C_{0} := \max_{s \in S} d_{\mathcal{C}}(x_{0}, sx_{0})$, $D_{\mathcal{C}} := d_{\mathcal{C}}(x_{0}, \varphi x_{0}) = \tau_{\mathcal{C}}(\varphi)$ and $D_{S} := \|\varphi\|_{S} =d_{S}(id, \varphi)$.

In \cite{bestvina2002bounded}, M. Bestvina and K. Fujiwara coined and proved the following property:
\begin{dfn}[Weak proper discontinuity, {\cite[Proposition 6]{bestvina2002bounded}}]\label{dfn:WPDMod}
Every pseudo-Anosov mapping class $\phi$ satisfies the following property, called \emph{weak proper discontinuity} or WPD property in short. For each $x_{0} \in \mathcal{C}(\Sigma)$ and $L>0$ there exists $N, M$ such that \[
\# \big\{ h \in \Mod(\Sigma) : d_{\mathcal{C}}(x_{0}, h x_{0}) < L \,\, \textrm{and}\,\, d_{\mathcal{C}}(\phi^{N}x_{0}, h\phi^{N} x_{0}) < L\big\} < M.
\]
\end{dfn}

The following fact is a consequence of the WPD property of $\varphi$.
\begin{fact}\label{fact:pseudoAnosov}
There exists a constant $E_{0}>  0$ such that the following holds.
\begin{enumerate}
\item (non-elementary) For each $g \in \Mod(\Sigma)$, there exist $s, t \in S \cup \{id\}$ such that $(\varphi^{i} x_{0}, sgx_{0})_{x_{0}} \le E_{0}$ for all $i>0$ and $(\varphi^{j} x_{0}, tgx_{0})_{x_{0}} \le E_{0}$ for all $j<0$.
\item (WPDness) Let $n>0$ and $g \in \Mod(\Sigma)$. Let $\gamma := [x_{0}, \varphi^{n} x_{0}]_{\Gamma(\varphi)}$, let $p \in \pi_{\gamma} (gx_{0})$ and let $q \in \pi_{\gamma}(g\varphi^{n} x_{0})$. Suppose that $p$ appears earlier than $q$ along $\gamma$ and suppose that $d_{\mathcal{C}}(p, q)> E_{0}$. Then $d_{S}(\varphi^{i}, g \varphi^{j}) < E_{0}$ for some $i , j\in \{0, 1,\ldots, n\}$.
\end{enumerate}
\end{fact}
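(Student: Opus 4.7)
The plan is to invoke two standard structural facts about the $\Mod(\Sigma)$-action on $\mathcal{C}(\Sigma)$: the action is non-elementary, and by Bowditch it is acylindrical (so every pseudo-Anosov is WPD). Let $\xi^{\pm}\in\partial\mathcal{C}(\Sigma)$ denote the attracting and repelling fixed points of $\varphi$. Since $x_{0}$ lies on a $\varphi$-invariant bi-infinite geodesic, the orbit $\{\varphi^{i}x_{0}\}_{i\in\Z}$ parametrizes this geodesic at constant speed $\tau_{\mathcal{C}}(\varphi)$, so for each fixed $y\in\mathcal{C}(\Sigma)$ the Gromov product $(\varphi^{i}x_{0},y)_{x_{0}}$ agrees up to an additive $\delta$-error with $(\xi^{+},y)_{x_{0}}$ for all sufficiently large $i>0$, and with $(\xi^{-},y)_{x_{0}}$ for sufficiently negative $i<0$. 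It therefore suffices to bound the asymptotic Gromov products $(\xi^{\pm},\cdot)_{x_{0}}$ uniformly.

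For part (1), since $\Mod(\Sigma)=\langle S\rangle$ does not fix $\xi^{+}$ (respectively $\xi^{-}$), some generator $s_{0}\in S$ has $s_{0}\xi^{+}\neq\xi^{+}$ and some $t_{0}\in S$ has $t_{0}\xi^{-}\neq\xi^{-}$. Fixing such $s_{0}$, I split on the size of $(\xi^{+},gx_{0})_{x_{0}}$: if it is bounded by a preselected constant $M$, take $s=id$; otherwise $gx_{0}$ lies in an arbitrarily small shadow of $\xi^{+}$ at $x_{0}$, and the isometry $s_{0}$ sends that shadow into a shadow of $s_{0}\xi^{+}$, so the $\delta$-hyperbolicity inequality on Gromov products forces $(\xi^{+},s_{0}gx_{0})_{x_{0}}\le (\xi^{+},s_{0}\xi^{+})_{x_{0}}+\delta$, a uniform constant. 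Taking $s=s_{0}$ in this case and repeating the dichotomy with $t_{0}$ and $\xi^{-}$ finishes (1).

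For part (2), the hypothesis on projection diameter combined with thin-triangle geometry in $\mathcal{C}(\Sigma)$ forces $[gx_{0},g\varphi^{n}x_{0}]_{\mathcal{C}}$ and $[x_{0},\varphi^{n}x_{0}]_{\mathcal{C}}$ to fellow travel within a uniform additive constant on a subsegment of curve-complex length essentially $E_{0}$. I sample along this subsegment to pick indices $i_{1}<i_{2}$ in $\{0,\ldots,n\}$ and matching $j_{1}<j_{2}$ such that $d_{\mathcal{C}}(\varphi^{i_{k}}x_{0},g\varphi^{j_{k}}x_{0})$ is uniformly bounded for $k=1,2$, with $|i_{2}-i_{1}|$ as large as desired by taking $E_{0}$ large. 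Setting $h:=\varphi^{-i_{1}}g\varphi^{j_{1}}$, both $x_{0}$ and the distant point $\varphi^{i_{2}-i_{1}}x_{0}$ are moved a bounded $d_{\mathcal{C}}$-amount by $h$; the possible mismatch between $j_{2}-j_{1}$ and $i_{2}-i_{1}$ is absorbed because two $\varphi$-orbit rays that fellow travel must synchronize up to a bounded offset (both progress at speed $\tau_{\mathcal{C}}(\varphi)$). Bowditch's acylindricity then confines $h$ to a finite subset of $\Mod(\Sigma)$, whose $d_{S}$-diameter is automatically bounded, giving $d_{S}(\varphi^{i_{1}},g\varphi^{j_{1}})=\|h\|_{S}<E_{0}$ after enlarging $E_{0}$.

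The main technical nuisance is the self-referential choice of $E_{0}$: as an input threshold it must force enough fellow travelling to exceed Bowditch's acylindricity threshold, while as an output it must dominate both the $d_{S}$-diameter of the resulting exceptional finite set and the Gromov product bounds from part (1). I would resolve this by fixing Bowditch's acylindricity data first, extracting the resulting finite set and its $d_{S}$-diameter, and only then defining $E_{0}$ to dominate all the constants produced.
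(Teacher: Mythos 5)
Your proposal is correct, and it is worth noting that your argument for part (1) takes a genuinely different route from the paper. The paper argues by contradiction: if no uniform $E_{0}$ existed, one could make $[x_{0}, gx_{0}]_{\mathcal{C}}$ fellow-travel simultaneously with $s^{-1}\Gamma$ for all $s \in S \cup \{id\}$ over arbitrarily long distances (where $\Gamma$ is the forward half-axis of $\varphi$), whence $\Gamma$ and $s\Gamma$ have arbitrarily long overlaps for every $s$; the WPD-ness of $\varphi$ (via Bestvina--Fujiwara) then forces all of $S$, and hence $\Mod(\Sigma)$, into a virtually cyclic subgroup, a contradiction. Your argument is instead a direct dichotomy: pick $s_{0} \in S$ with $s_{0}\xi^{+} \neq \xi^{+}$ (which exists since the $\xi^{+}$-stabilizer is a proper subgroup and $S$ generates), set $M_{0} := (\xi^{+}, s_{0}\xi^{+})_{x_{0}} < \infty$, and use the hyperbolic inequality $\min\big\{(\xi^{+}, s_{0}gx_{0})_{x_{0}}, (s_{0}\xi^{+}, s_{0}gx_{0})_{x_{0}}\big\} \le M_{0} + \delta$ together with $|(s_{0}\xi^{+}, s_{0}gx_{0})_{x_{0}} - (\xi^{+}, gx_{0})_{x_{0}}| \le C_{0}$ to conclude that if $(\xi^{+}, gx_{0})_{x_{0}}$ is large then $s = s_{0}$ gives a uniformly bounded Gromov product, while otherwise $s = id$ does. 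This avoids the appeal to WPD-ness entirely for part (1) and needs only non-elementarity and $\delta$-hyperbolicity; the paper's route is arguably heavier but recycles the same Bestvina--Fujiwara machinery it needs anyway for part (2). Your treatment of part (2) is essentially the paper's (the paper simply cites Bestvina--Fujiwara's Proposition 6, while you spell out the synchronization-of-indices argument), and your remark about resolving the apparent circularity in $E_{0}$ by fixing the acylindricity data first and then taking $E_{0}$ to dominate all outputs is the correct way to close the loop.
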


We include a sketch of proof for completeness.
\begin{proof}[Sketch of proof]
\begin{enumerate}
\item We will prove the statement for $s$; the statement for $t$ follows from the same argument. Let $\Gamma$ be the forward half-axis of $\varphi$ starting at $x_{0}$. Suppose to the contrary that there is no such uniform constant $E_{0}$. That means, in view of Lemma \ref{lem:projApprox}, for each $D>0$ there exists $g \in \Mod(\Sigma)$ such that, for every $s \in S \cup \{id\}$, the $D$-long initial subsegments of $[s^{-1} x_{0}, gx_{0}]_{\mathcal{C}}$ and $s^{-1} \Gamma$ are $20\delta$-fellow traveling. Note that $s^{-1}x_{0}$ and $x_{0}$ are $C_{0}$-close for all $s \in S \cup \{id\}$. It follows the $D$-long initial subsegments of $[x_{0}, gx_{0}]_{\mathcal{C}}$ and $s^{-1} \Gamma$ are $(C_{0} + 40\delta)$-fellow traveling for each $s \in S \cup \{id\}$. Now for each $s \in S$, the $D$-long initial subsegments of $\Gamma$ and $s^{-1} \Gamma$ are both $(C_{0} + 40\delta)$-fellow traveling with the same geodesic, so they are $(2C_{0} + 80\delta)$-fellow traveling with each other. Note that $D$ can be chosen arbitrarily large. Due to the WPD property of $\varphi$ (\cite[Proposition 6(1), (2)]{bestvina2002bounded}), all $s \in S$ belong to a single virtually cyclic subgroup of $\Mod(\Sigma)$. Since $S$ generates $\Mod(\Sigma)$, this implies that $\Mod(\Sigma)$ is virtually cyclic, which is a contradiction.
\item

Let $N$ be the constant for $L = 8\delta + D_{\mathcal{C}}$ as in the WPD property of $\varphi$ (Definition \ref{dfn:WPDMod}). Then  \[
\mathcal{H} := \Big\{ h \in G : d_{\mathcal{C}}\big(x_{0}, hx_{0}\big) < 8\delta + D_{\mathcal{C}}\,\,\textrm{and}\,\, d_{\mathcal{C}}\big(\varphi^{N} x_{0}, h \varphi^{N}x_{0}\big) < 8\delta + D_{\mathcal{C}} \Big\}
\]
is finite. We now take $E_{0} = D_{\mathcal{C}} (N+1)+ 24\delta + \max_{h \in \mathcal{H}} \|h\|_{S}$.

Let us now prove the claim. Lemma \ref{lem:hyperbolic}(2) guarantees $p', p'', q', q''\in \gamma$ such that \begin{enumerate}
\item $p'$ appears earlier than $q'$ and $p''$ appears earlier than $q''$;
\item $d_{\mathcal{C}}(p, p'') \le 12\delta$ and $d_{\mathcal{C}} (q, q')\le 12\delta$, and
\item $[gp', gq']_{g\gamma}$ and $[p'', q'']_{\gamma}$ are $8\delta$-synchronized (cf. Definition \ref{dfn:sync}).
\end{enumerate}
(Recall that synchronized segments have the same length.) Item (2) implies that \[d_{\mathcal{C}}(p'', q'') = d_{\mathcal{C}}(p', q') \ge  d_{\mathcal{C}}(p, q) - 24\delta > E_{0} - 24\delta.
\]

Let $i$ and $j$ be the smallest integers such that $\varphi^{i} x_{0} \in [p'', q'']_{\gamma}$ and $\varphi^{j} x_{0} \in [p', q']_{\gamma}$. Note that $i, j \in \{0,1, \ldots, n\}$. Then the parameter for $\varphi^{i}x_{0}$ under the length parametrization of $[p'', q'']_{\gamma}$ is $d(p'', \varphi^{i} x_{0})$, which is at most $D_{\mathcal{C}}$. Similarly, the parameter $d(p', \varphi^{j} x_{0})$ for $g\varphi^{j} x_{0}$ under the length parametrization of $[gp', gq']_{g\gamma}$ is at most $D_{\mathcal{C}}$. Hence, these parameters differ by at most $D_{\mathcal{C}}$. By the synchronization (item (3) above), we have \[
d_{\mathcal{C}}(\varphi^{i}x_{0}, g \varphi^{j} x_{0}) <  8\delta +  \Big| d_{\mathcal{C}}(p'', \varphi^{i} x_{0}) -d_{\mathcal{C}}(p', \varphi^{j} x_{0}) \Big|\le 8\delta + D_{\mathcal{C}}.
\]
Note that $\varphi^{i + N} x_{0}$ appears later than $\varphi^{i} x_{0}$ on $\gamma$. Furthermore, we have \[
d_{\mathcal{C}}(p', \varphi^{i+N}x_{0}) \le d_{\mathcal{C}}(p', \varphi^{i} x_{0}) + d_{\mathcal{C}}(\varphi^{i} x_{0}, \varphi^{i+N}x_{0}) \le D_{\mathcal{C}} (N+1) \le E_{0} -24\delta < d_{\mathcal{C}}(p', q').
\]
Thus, $\varphi^{i+N}x_{0}$ belongs to $[p', q']_{\gamma}$. For the same reason, $\varphi^{j + N} x_{0}$ belongs to $[p'', q'']_{\gamma}$. Their length parameters differ by at most $D_{\mathcal{C}}$. By the synchronization (item (3) above), we have \[
d_{\mathcal{C}}(\varphi^{i+N}x_{0}, g \varphi^{j+N} x_{0}) < 8\delta + \Big| d_{\mathcal{C}}(p'',\varphi^{i+N} x_{0}) -d_{\mathcal{C}}(p', \varphi^{j+N} x_{0}) \Big|\le 8\delta + D_{\mathcal{C}}
\]
In summary, $ \varphi^{-i} g \varphi^{j}$ belongs to $\mathcal{H}$. This implies $d_{\mathcal{C}}(\varphi^{i} x_{0}, g \varphi^{j} x_{0}) = \|\varphi^{-i} g \varphi^{j}\|_{S} < E_{0}$.\qedhere
\end{enumerate}
\end{proof}

We call a translate of $(id, \varphi, \ldots, \varphi^{n})$ a \emph{$\varphi$-orbit sequence}. For a $\varphi$-orbit sequence $\gamma =(g, g\varphi, \ldots, g\varphi^{n})$ and $ h \in \Mod(\Sigma)$, we define $\pi_{\gamma}(h)$ by means of the orbit point: we define \[\begin{aligned}
\Proj(\gamma) &:= [gx_{0}, g \varphi^{n} x_{0}]_{g\Gamma(\varphi)},\\
\pi_{\gamma}(h) &:= \pi_{ [gx_{0}, g\varphi^{n} x_{0}]_{g\Gamma(\varphi)}}(hx_{0}) \subseteq \Proj(\gamma).
\end{aligned}
\]
In the above, there can be more than one $d_{\mathcal{C}}$-geodesics connecting $gx_{0}$ to $g \varphi^{n} x_{0}$. By choosing the subgeodesic of $g \cdot \Gamma(\varphi)$ for $\Proj(\gamma)$, we can guarantee that $\pi_{\gamma}(a) = a x_{0}$ holds for every $a \in \gamma$.

We now introduce the second ingredient of the main argument. Note that $\varphi$-orbit sequences are uniformly cobounded, i.e., they are seen uniformly small in the subsurface curve complexes. Cobounded geodesics in $\Mod(\Sigma)$ satisfy the  following \emph{weakly contracting property}, which is observed by Behrstock in \cite[Lemma 5.6]{behrstock2006asymptotic}) and by Duchin and Rafi in \cite[Theorem 4.2]{duchin2009divergence}.

\begin{prop}[{\cite[Theorem 4.2]{duchin2009divergence}}]\label{prop:duchin}
There exists $F_{0}>0$ such that, if $\gamma$ is a $\varphi$-orbit sequence and  $g \in \Mod(\Sigma)$ is a mapping class satisfying $d_{S}(g, \gamma) > F_{0}$, then we have \[
\diam_{\mathcal{C}}\left(\pi_{\gamma}(B)\right) \le F_{0}
\]
for the $d_{S}$-metric ball $B$ of radius $0.5 d_{S}(g, \gamma)$ centered at $g$.
\end{prop}

Meanwhile, $\pi_{\gamma}$ is a (uniformly) coarsely Lipschitz map for every $\varphi$-orbit sequence $\gamma$. This is because the orbit map $\Proj : \Mod(\Sigma) \rightarrow \mathcal{C}(\Sigma)$ and the nearest point projection onto a geodesic in $\mathcal{C}(\Sigma)$ are both coarsely Lipschitz (see Fact \ref{fact:hyperbolic}). We take $K_{0}>0$ so that\[
\diam_{\mathcal{C}}\big(\pi_{\gamma}(g) \cup \pi_{\gamma}(h)\big)\le K_{0}d_{S}(g, h)  + K_{0}
\]
for every $g, h \in \Mod(\Sigma)$ and for every $\varphi$-orbit sequence $\gamma$. This has the following consequence:
\begin{lem}\label{lem:Lipschitz}
There exists $K_{1} > 0$ such that, for each mapping class $g \in \Mod(\Sigma)$, for each $\varphi$-orbit sequence $\gamma$, and for each of its elements  $h \in \gamma$,  we have \[
d_{S}(g, h) \le K_{1}d_{S}(g, \gamma) + K_{1}\diam_{\mathcal{C}}(\pi_{\gamma}(g)\cup hx_{0}) + K_{1}.
\]
\end{lem}

\begin{proof}
Recall the constants $D_{\mathcal{C}}$ and $D_{S}$. Note that $\{ bx_{0} : b \in \gamma\}$ is $D_{\mathcal{C}}$-coarsely dense in $\Proj(\gamma)$. Moreover, for each $a, b \in \gamma$ we have $d_{S}(a, b) \le \frac{D_{S}}{D_{\mathcal{C}}} d_{\mathcal{C}}(ax_{0}, bx_{0})$ and $\pi_{\gamma}(a) = ax_{0}$.

Now let $b \in \gamma$ be such that $bx_{0}$ is $D_{\mathcal{C}}$-close to $\pi_{\gamma}(g)$. We have \[\begin{aligned}
d_{S}(g, h) &\le \inf_{a \in \gamma} \big(d_{S}(g, a) + d_{S}(a, b) + d_{S}(b, h)\big) \\
&\le \inf_{a \in \gamma} \left(d_{S}(g, a) + \frac{D_{S}}{D_{\mathcal{C}}}d_{\mathcal{C}}(ax_{0}, bx_{0}) + \frac{D_{S}}{D_{\mathcal{C}}} d_{\mathcal{C}}(bx_{0}, hx_{0})\right) \\
&\le \inf_{a \in \gamma} \left(d_{S}(g, a) + \frac{D_{S}}{D_{\mathcal{C}}} \big(D_{\mathcal{C}} + \diam_{\mathcal{C}}(ax_{0}\cup \pi_{\gamma}(g)) \big) + \frac{D_{S}}{D_{\mathcal{C}}}\big(D_{\mathcal{C}}+ \diam_{\mathcal{C}}(\pi_{\gamma}(g)\cup hx_{0})\big)\right) \\
&\le \inf_{a \in \gamma} \left(d_{S}(g, a) + \frac{D_{S}}{D_{\mathcal{C}}} K_{0} d_{S}(g, a) + \frac{D_{S}K_{0}}{D_{\mathcal{C}}}+ \frac{D_{S}}{D_{\mathcal{C}}}  \diam_{\mathcal{C}}(\pi_{\gamma}(g)\cup hx_{0}) + 2D_{S}\right).
\end{aligned}
\]
By taking $a$ to be the one realizing the distance, we deduce the conclusion.
\end{proof}
For notational convenience, we define an integer $K_{map}$ dominating all the others: \[
K_{map} := 10^{5} \cdot \big\lceil C_{0}+D_{S}+E_{0}+ F_{0}+K_{0}+ K_{1}+\delta+1\big\rceil  \cdot D_{\mathcal{C}}.
\]

The reason we multiply $D_{\mathcal{C}}$ at the end (which is a positive integer) is that we want  $K_{map}/D_{\mathcal{C}}$ to be an integer. This leads to some notational convenience in the proof of Proposition \ref{prop:weakConcat1} and Section \ref{section:generic}.

\textbf{Throughout, we fix the choices of the finite generating set $S$, pseudo-Anosov $\varphi$ and constants $\delta, C_{0}, D_{\mathcal{C}}, D_{S}, E_{0}, F_{0}, K_{0}, K_{1}$ and $K_{map}$.}

\section{Geodesics in the curve complex $\mathcal{C}(\Sigma)$ and their alignment}\label{section:align}

In this section, we gather facts about geodesic in $\mathcal{C}(\Sigma)$. All of these are well-known consequences of the $\delta$-hyperbolicity of $\mathcal{C}(\Sigma)$, and their proofs are included in Appendix \ref{appendix:hyperbolic} for completeness.

Given $\epsilon, L, L' > 0$ and two geodesics $\gamma : [0, L] \rightarrow \mathcal{C}(\Sigma)$ and $\eta : [0, L'] \rightarrow \mathcal{C}(\Sigma)$, we say that $\gamma$ and $\eta$ are \emph{$\epsilon$-fellow traveling} if \[
 d_{\mathcal{C}} (\gamma(0), \eta(0)) < \epsilon,\,\, d_{\mathcal{C}}(\gamma(L), \eta(L')) < \epsilon\,\,\textrm{and}\,\,  d_{Haus}(\gamma, \eta) < \epsilon.
\]
Recall that for a geodesic $\gamma$ in $\mathcal{C}(\Sigma)$, we denote by $\pi_{\gamma}$ the nearest point projection onto $\gamma$.

\begin{fact}\label{fact:hyperbolic}
The following hold true.
\begin{enumerate}
\item Let $x$ and $y$ be points in $\mathcal{C}(\Sigma)$ and $\gamma$ be a geodesic in $\mathcal{C}(\Sigma)$. Then $\pi_{\gamma}(x) \cup \pi_{\gamma}(y)$ has diameter at most $d_{\mathcal{C}}(x, y) + 20\delta$.
\item Let $x$ and $y$ be points in $\mathcal{C}re(\Sigma)$, let $\gamma$ be a geodesic in $\mathcal{C}(\Sigma)$ and let $p \in \pi_{\gamma}(x)$, $q \in \pi_{\gamma}(y)$. Suppose that $p$ appears earlier than $q$ along $\gamma$ and suppose that $d_{\mathcal{C}}(p, q) > 20\delta$. Then any $d_{\mathcal{C}}$-geodesic $[x, y]_{\mathcal{C}}$ contains a subsegment that is $20\delta$-fellow traveling with $[p, q]_{\gamma}$.
\end{enumerate}
\end{fact}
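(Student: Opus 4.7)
Both statements are standard consequences of the thin-polygon property in $\delta$-hyperbolic spaces, and I will analyze the geodesic quadrilateral with vertices $x, p, y, q$, where $p \in \pi_\gamma(x)$ and $q \in \pi_\gamma(y)$. The only non-geometric ingredient will be the following \emph{nearest-point pushoff}: any $r$ on $[x, p]_{\mathcal{C}}$ with $d_{\mathcal{C}}(r, \gamma) \le c\delta$ must satisfy $d_{\mathcal{C}}(r, p) \le c\delta$. This holds because, if $p' \in \gamma$ realizes the distance from $r$ to $\gamma$, then $d_{\mathcal{C}}(x, p') \le d_{\mathcal{C}}(x, r) + c\delta = d_{\mathcal{C}}(x, p) - d_{\mathcal{C}}(r, p) + c\delta$, and minimality of $d_{\mathcal{C}}(x, p)$ among distances from $x$ to $\gamma$ forces $d_{\mathcal{C}}(r, p) \le c\delta$. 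The symmetric statement holds for $[y, q]_{\mathcal{C}}$.

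\textbf{Part (1).} Assuming $d_{\mathcal{C}}(p, q) > 20\delta$ (else nothing to prove), I will apply the $2\delta$-thinness of the geodesic quadrilateral to a point $p'$ on $[p, q]_\gamma$ at distance roughly $4\delta$ from $p$: the resulting $r$ with $d_{\mathcal{C}}(r, p') \le 2\delta$ cannot lie on $[x, p]_{\mathcal{C}}$ or $[y, q]_{\mathcal{C}}$ by the pushoff above, so $r \in [x, y]_{\mathcal{C}}$. Doing the same near $q$ yields a second point $r' \in [x, y]_{\mathcal{C}}$ within $2\delta$ of a point on $[p, q]_\gamma$ near $q$. Chaining $p \to r \to r' \to q$ through the triangle inequality and exploiting that $r, r'$ lie on the single geodesic $[x, y]_{\mathcal{C}}$ gives $d_{\mathcal{C}}(p, q) \le d_{\mathcal{C}}(x, y) + O(\delta)$, with the constants arranged so that the additive error is at most $20\delta$.

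\textbf{Part (2) and the main obstacle.} Given Part (1), I will reuse the same quadrilateral analysis: every $p' \in [p, q]_\gamma$ at distance at least $4\delta$ from both endpoints is $2\delta$-close to a point $r(p') \in [x, y]_{\mathcal{C}}$. A coarse monotonicity of $r$ along $[p, q]_\gamma$, obtained by applying the thin-triangle argument to triples of such points, will let me extract a subsegment of $[x, y]_{\mathcal{C}}$ whose Hausdorff distance to $[p, q]_\gamma$ is at most $20\delta$; after sliding its endpoints by at most $20\delta$ to match $p$ and $q$, I obtain the required fellow-traveling subsegment. The only real difficulty will be quantitative bookkeeping to obtain the explicit constant $20\delta$ rather than a generic $O(\delta)$; the conceptual content is entirely the nearest-point pushoff combined with thin quadrilaterals, which is the standard treatment in references such as Bridson--Haefliger.
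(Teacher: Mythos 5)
Your proposal takes a genuinely different route from the paper. The paper works with the endpoints $z, w$ of $\gamma$, splitting the picture into the triangles $\triangle zyw$ and $\triangle xyz$ and tracking Gromov products via the preceding lemma on nearest-point projections; you instead work directly with the quadrilateral $x, p, q, y$ and replace the Gromov-product bookkeeping by your nearest-point pushoff (any $r \in [x,p]$ with $d(r, \gamma) \le c\delta$ has $d(r, p) \le c\delta$), which is correct and is a clean observation. Your route is more elementary in spirit and avoids introducing the endpoints of $\gamma$ at all, which is appealing.

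That said, I do not think the gap you flag is ``just bookkeeping.'' With the paper's convention (triangles are $4\delta$-thin), the four-gon $xpqy$ is only $8\delta$-slim, not ``$2\delta$-thin'' as you write; you appear to be implicitly using a $\delta$-slim convention that is incompatible with the constants here. Executing your quadrilateral-plus-pushoff chain with the paper's constants: a point $p' \in [p,q]_\gamma$ with $d(p,p'), d(p',q) > 16\delta$ lands within $8\delta$ of $[x,y]$, and chaining two such points gives $d(p,q) \le d(x,y) + 24\delta$ at best, short of the claimed $20\delta$. One \emph{can} recover the stated constant (indeed $18\delta$) starting from your pushoff, but by a genuinely different computation: the pushoff applied inside the thin triangles $\triangle xpq$ and $\triangle ypq$ gives $(x,q)_p \le 4\delta$ and $(y,p)_q \le 4\delta$, and feeding these into the four-point inequality $d(x,q) + d(p,y) \le \max\{d(x,p)+d(q,y),\ d(x,y)+d(p,q)\} + 2\delta$ (the first alternative being ruled out when $d(p,q) > 20\delta$) yields $d(p,q) \le d(x,y) + 18\delta$. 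Your Part (2) has the same issue amplified: the points of $[p,q]_\gamma$ that you can certify to be close to $[x,y]$ must already sit of order $16\delta$ away from $p$ and $q$, so ``sliding the endpoints by at most $20\delta$'' is not obviously enough to reach the stated $20\delta$-fellow-traveling, and the ``coarse monotonicity'' is an unproved step that needs its own thin-triangle argument. The conceptual skeleton (pushoff plus hyperbolicity) is sound, and the precise constant $20\delta$ is not essential for the paper's downstream use (it is absorbed into $K_{map}$), but as a proof of the statement as written the quantitative part is a real missing step, not a triviality.
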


Often, we consider a sequence of geodesics that are disjoint parts of a quasi-geodesic, or equivalently, are fellow-traveling with disjoint subsegments of a long geodesic. In the setting of Gromov hyperbolic space, the following notion captures this situation (see Figure \ref{fig:align}).

\begin{dfn}\label{dfn:align}
Let $K>0$ and let $\gamma_{1}, \gamma_{2}, \ldots, \gamma_{n}$ be finite geodesics on $\mathcal{C}(\Sigma)$ (which includes the case of degenerate geodesics, i.e., points in $\mathcal{C}(\Sigma)$). We say that $(\gamma_{1}, \ldots, \gamma_{n})$ is $K$-aligned if for each $i=1, \ldots, n-1$ we have \[
\begin{aligned}
\diam_{\mathcal{C}}\big(\pi_{\gamma_{i}}(\gamma_{i+1}) \cup (\textrm{ending point of $\gamma_{i}$})\big) < K \,\,\textrm{and}\\ \diam_{\mathcal{C}}\big(\pi_{\gamma_{i+1}}(\gamma_{i}) \cup (\textrm{beginning point of $\gamma_{i+1}$})\big) < K.
\end{aligned}
\]
\end{dfn}

\begin{figure}
\begin{tikzpicture}
\def\c{2}
\foreach \i in {1, ..., 4}{
\draw[very thick] (\i*\c*1.3, 0) arc (120:60:\c);
\draw (\i*\c*1.3+1, -0.02) node {$\gamma_{\i}$};
}
\foreach \i in {1, 2, 3}{
\draw[thick, dashed, <->] (\i*\c*1.3+1.85, 0.163) arc (130:50:0.34*\c);}

\fill (0.8, 0.2) circle (0.08);
\fill (14.2, 0.2) circle (0.08);

\draw[thick, dashed, ->] (0.8, 0.2) arc (105:70:1.55*\c);
\draw (0.8, -0.1) node {$\gamma_{0}$};

\draw[thick, dashed, ->] (14.2, 0.2) arc (75:110:1.55*\c);
\draw (14.2, -0.1) node {$\gamma_{5}$};

\end{tikzpicture}
\caption{Schematics for alignment in $\mathcal{C}(\Sigma)$. In this picture, the sequence of geodesics $(\gamma_{0}, \gamma_{1}, \ldots, \gamma_{5})$ is aligned, where $\gamma_{0}, \gamma_{5}$ are degenerate geodesics, i.e., points.}
\label{fig:align}
\end{figure}
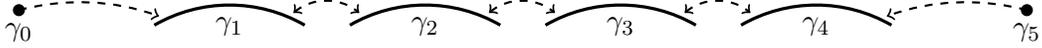

\begin{fact}\label{fact:fellowAlign}
Let $\gamma$ be a geodesic. Let $\gamma_{1}$ and $\gamma_{2}$ be subsegments of $\gamma$, with $\gamma_{1}$ appearing earlier than $\gamma_{2}$. Let $\kappa_{1}$ and $\kappa_{2}$ be geodesics that are $K$-fellow traveling with $\gamma_{1}$ and $\gamma_{2}$, respectively. Then $(\kappa_{1}, \kappa_{2})$ is $6K$-aligned.
\end{fact}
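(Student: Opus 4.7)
The plan is to verify both inequalities in Definition \ref{dfn:align}; by the symmetry of the configuration it suffices to bound the projection of $\kappa_2$ onto $\kappa_1$, and the projection of $\kappa_1$ onto $\kappa_2$ will follow by the mirror argument. Using the orientation induced by $\gamma$, I write $a_i, b_i$ for the initial and terminal endpoints of $\gamma_i$, so that $b_1$ precedes $a_2$ along $\gamma$, and $a_i', b_i'$ for the corresponding endpoints of $\kappa_i$. The $K$-fellow-traveling assumption provides $d(a_i, a_i'), d(b_i, b_i') \le K$, and the terminal endpoint of $\kappa_1$ that appears in Definition \ref{dfn:align} is exactly $b_1'$.

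Next, I fix an arbitrary $y \in \kappa_2$ and choose a partner $y^* \in \gamma_2$ with $d_{\mathcal{C}}(y, y^*) \le K$, using fellow-traveling again. The crucial geometric input is that $a_1, b_1, y^*$ lie on the geodesic $\gamma$ in this order, so $d_{\mathcal{C}}(a_1, y^*) = d_{\mathcal{C}}(a_1, b_1) + d_{\mathcal{C}}(b_1, y^*)$. Swapping unprimed points for primed ones via the triangle inequality, at a total cost of at most $6K$, yields
\[
d_{\mathcal{C}}(a_1', y) \ge d_{\mathcal{C}}(a_1', b_1') + d_{\mathcal{C}}(b_1', y) - 6K,
\]
which is equivalent to the Gromov product bound $(a_1', y)_{b_1'} \le 3K$.

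Finally, I will appeal to the standard thin-triangle estimate for nearest-point projections onto a geodesic in a $\delta$-hyperbolic space (recorded in Appendix \ref{appendix:hyperbolic}): for the geodesic $\kappa_1 = [a_1', b_1']_{\mathcal{C}}$ and the external point $y$, any nearest point $z \in \pi_{\kappa_1}(y)$ satisfies $d_{\mathcal{C}}(z, b_1') \le (a_1', y)_{b_1'} + O(\delta)$. Combined with the bound from the previous paragraph and with the paper's convention that $K$ dominates $\delta$, this gives $d_{\mathcal{C}}(\pi_{\kappa_1}(y), b_1') \le 3K + O(\delta)$, so $\diam_{\mathcal{C}}(\pi_{\kappa_1}(\kappa_2) \cup \{b_1'\}) \le 6K$ as required. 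The only real obstacle is the constant bookkeeping in the swap step — there is no substantial geometric content beyond leveraging the ambient geodesic $\gamma$ to impose near-collinearity between $\gamma_1$ and $\gamma_2$.
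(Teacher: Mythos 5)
Your computation of the Gromov product bound $(a_1', y)_{b_1'} \le 3K$ is correct and shares the key idea with the paper's proof (exploiting collinearity of the partner points on $\gamma$ and transferring to the $\kappa_i$ via the triangle inequality). The gap is in the final step. You invoke the $\delta$-hyperbolic projection estimate (Lemma~\ref{lem:projApprox}), which yields $d(\pi_{\kappa_1}(y), b_1') \le 3K + 8\delta$, and then absorb the $8\delta$ by appealing to a ``convention that $K$ dominates $\delta$.'' There is no such convention in the statement of Fact~\ref{fact:fellowAlign}: $K$ is an arbitrary positive constant, and the paper in fact points out explicitly in Section~\ref{subsection:CAT(0)} that ``Fact~\ref{fact:fellowAlign} holds in general metric spaces as well,'' i.e.\ without any hyperbolicity at all. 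For small $K$ (e.g.\ $K < 8\delta/3$) your argument does not produce the stated bound $6K$.

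The fix requires no hyperbolicity and is exactly what the paper does: once you have $d(a_1', y) \ge d(a_1', b_1') + d(b_1', y) - 6K$, take any $q \in \pi_{\kappa_1}(y)$. Since $q$ lies on $\kappa_1 = [a_1', b_1']$, we have $d(a_1', q) = d(a_1', b_1') - d(q, b_1')$, hence
\[
d(y, q) \ge d(a_1', y) - d(a_1', q) \ge d(q, b_1') + d(b_1', y) - 6K.
\]
Minimality of the nearest-point projection gives $d(y, q) \le d(y, b_1')$, so $d(q, b_1') \le 6K$. This replaces your appeal to Lemma~\ref{lem:projApprox} with the one inequality $d(y,q) \le d(y,b_1')$, removes the $\delta$-term, and recovers the constant $6K$ exactly. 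With this patch your proof is the same as the paper's, just organized around the opposite half of the alignment condition (you project $\kappa_2$ onto $\kappa_1$; the paper projects $\kappa_1$ onto $\kappa_2$), with symmetry handling the other half in both cases.
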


\begin{proof}
We will show that $(x, \kappa_{2})$ is $6K$-aligned for each $x \in \kappa_{1}$. By the assumption, we have $d(x, x')< K$ for some $x' \in \gamma_{1}$. Let us denote $\gamma_{2}$ by $[y', z']_{\mathcal{C}}$ and $\kappa_{2}$ by $[y, z]_{\mathcal{C}}$; we know from the assumption that $d_{\mathcal{C}}(y, y'), d_{\mathcal{C}}(z, z') < K$. Now, pick an element $q \in \pi_{\kappa_{2}}(x)$. Then $d_{\mathcal{C}}(q, q') < K$ for some $q' \in \gamma_{2} = [y', z']_{\mathcal{C}}$. Since $x', y', z'$ are in order from left to right along $\gamma$, we have\[
\begin{aligned}
d_{\mathcal{C}}(x', y') + d_{\mathcal{C}}(y', z') = d_{\mathcal{C}}(x', z') \le d_{\mathcal{C}}(x', q) + d_{\mathcal{C}}(q, z) + d_{\mathcal{C}}(z, z').
\end{aligned}
\]
This implies that \[\begin{aligned}
d_{\mathcal{C}}(x, q) &\ge d_{\mathcal{C}}(x', q) - d_{\mathcal{C}}(x, x') > d_{\mathcal{C}}(x', q) - K \\
&\ge \left( d_{\mathcal{C}}(x', y') + d_{\mathcal{C}}(y', z') - d_{\mathcal{C}}(q, z) -  d_{\mathcal{C}}(z, z')  \right) - K \\
&\ge \big(d_{\mathcal{C}}(x, y) - 2K\big) + \big(d_{\mathcal{C}}(y, z) - 2K \big)- d_{\mathcal{C}}(q, z) - 2K \\
&= d_{\mathcal{C}}(x, y) + d_{\mathcal{C}}(y, q) - 6K.
\end{aligned}
\]
Since $d_{\mathcal{C}}(x, q) =d_{\mathcal{C}}(x, \kappa_{2}) \le d_{\mathcal{C}}(x, y)$, we have $d_{\mathcal{C}}(y, q) < 6K$. Hence, $\pi_{\kappa_{2}}( x) \subseteq N_{6K}(y)$ and $(x, \kappa_{2})$ is $6K$-aligned. For the same reason, $(\kappa_{1}, w)$ is $6K$-aligned for $w \in \kappa_{2}$, and the conclusion follows.
\end{proof}

\begin{fact}\label{fact:fellowOverlap}
The following holds for each $K>0$ and $L \ge 12K$. Let $\gamma$ be a geodesic and let $\gamma_{1}$ and $\gamma_{2}$ be subsegments of $\gamma$ such that $\gamma_{1} \cap \gamma_{2}$ has length $L$. Let $[x, y]$ and $\kappa_{2}$ be geodesics that are $K$-fellow traveling with $\gamma_{1}$ and $\gamma_{2}$, respectively. Then $\pi_{\kappa}(x)$ appears earlier than $\pi_{\kappa}(y)$ along $\kappa$, and $d_{\mathcal{C}} (\pi_{\kappa}(x), \pi_{\kappa}(y)) > L - 10K$.
\end{fact}

\begin{proof}
In this proof, $s\wedge s'$ ($s \vee s'$, resp.) denotes the minimum (maximum, resp.) of $s$ and $s'$.

Let $\gamma : I \rightarrow \mathcal{C}(\Sigma)$ be the length parametrization of $\gamma$, let $\gamma_{i} =: [\gamma(s_{i}), \gamma(t_{i})]_{\gamma}$ and let $\kappa =: [z, w]$. Then by the assumption, we have $d_{\mathcal{C}}\big(\gamma(t_{1} \wedge t_{2}), \gamma(s_{1} \vee s_{2})\big) = (t_{1} \wedge t_{2}) - (s_{1} \vee s_{2}) = L$. $(\ast$)

Since $\gamma_{1}$ and $\gamma_{2}$ has nonempty intersection, we have either $s_{1} \le s_{2} \le t_{1}$ or $s_{2} \le s_{1} \le t_{2}$. We have \[
d_{\mathcal{C}}(\gamma(s_{1}), \gamma_{2}) = \left\{ \begin{array}{cc} d_{\mathcal{C}}(\gamma(s_{1}), \gamma(s_{2})) = s_{2} - s_{1} = (s_{1} \vee s_{2}) - s_{1} & \textrm{if $s_{1} \le s_{2} \le t_{1}$}, \\d_{\mathcal{C}}(\gamma(s_{1}), \gamma(s_{1})) =0 = (s_{1} \vee s_{2}) - s_{1} &\textrm{if $s_{2} \le s_{1} \le t_{2}$}. \end{array} \right.
\]
Recall that $\gamma_{1} = [\gamma(s_{1}), \gamma(t_{1})]_{\gamma}$ and $[x, y]$ are $K$-fellow traveling. Moreover, $\gamma_{2} = [\gamma(s_{2}), \gamma(t_{2})]_{\gamma}$ and $\kappa$ are $K$-fellow traveling. In particular, we have  $\kappa \subseteq N_{K}(\gamma_{2})$. This implies that \[\begin{aligned}
d_{\mathcal{C}}(x, \kappa) &\le d_{\mathcal{C}}(x, \gamma(s_{1})) + d_{\mathcal{C}}(\gamma(s_{1}), \kappa) \\
&\le K + d_{\mathcal{C}}( \gamma(s_{1}), N_{K}(\gamma_{2})) \le 2K + d_{\mathcal{C}}(\gamma(s_{1}), \gamma_{2}) = 2K + (s_{1} \vee s_{2}) - s_{1}.
\end{aligned}
\]
Now pick $p \in \kappa \setminus N_{5K}\big(\gamma(s_{1} \vee s_{2})\big)$. Then $d_{\mathcal{C}}(p, \gamma(\tau)) < K$ for some $\tau \in [s_{2}, t_{2}]$, which satisfies \[
|\tau - s_{1}| = \left\{ \begin{array}{cc} (\tau - s_{2})+ (s_{2} - s_{1}) = d_{\mathcal{C}}\big(\gamma(\tau), \gamma(s_{2}) \big) +  (s_{2} - s_{1}) \ge 4K + (s_{1} \vee s_{2}) - s_{1} & \textrm{when} \,s_{2} \ge s_{1} \\
d_{\mathcal{C}}\big(\gamma(\tau), \gamma(s_{1}) \big) \ge 4K = 4K +(s_{1} \vee s_{2}) - s_{1} & \textrm{when} \,s_{1} \ge s_{2}.\end{array}\right.
\]
This implies that  \[\begin{aligned}
d_{\mathcal{C}}(x, p) &\ge d_{\mathcal{C}}(\gamma(s_{1}) , \gamma(\tau)) - d_{\mathcal{C}}(\gamma(s_{1}), x) - d_{\mathcal{C}}(\gamma(\tau), p) \\
&> 4K + (s_{1} \vee s_{2}) - s_{1} - 2K\ge d_{\mathcal{C}}(x, \kappa).
\end{aligned}
\]
In conclusion, $\pi_{\kappa}(x)$ lies in the $5K$-neighborhood of $\gamma(s_{1} \vee s_{2})$. Similarly, $\pi_{\kappa}(y)$ lies in the $5K$-neighborhood of $\gamma(t_{1} \wedge t_{2})$. By ($\ast$), we have $d_{\mathcal{C}}(\pi_{\kappa}(x), \pi_{\kappa}(y)) \ge L - 10K$ and \[\begin{aligned}
\diam_{\mathcal{C}}\big(z, \pi_{\kappa}(x)\big) &< d_{\mathcal{C}}\big(z, \gamma(s_{1} \vee s_{2})\big) + 5K \\
&\le d_{\mathcal{C}}\big(z, \gamma(s_{2})\big) + d_{\mathcal{C}}\big( \gamma(s_{2}), \gamma(s_{1} \vee s_{2})\big) + 5K \\
&\le (s_{1} \vee s_{2}) -s_{2} + 6K \le (t_{1} \wedge t_{2})- s_{2} - 6K \\
&\le d_{\mathcal{C}}\big( \gamma(s_{2}), \gamma(t_{1} \wedge t_{2})\big) - d_{\mathcal{C}}(z, \gamma(s_{2}))  - 5K \\
&\le d_{\mathcal{C}} \big(z, \gamma(t_{1} \wedge t_{2})\big) - 5K\\
&<d_{\mathcal{C}}(z, \pi_{\kappa}(y)).
\end{aligned}
\]
Hence, $\pi_{\kappa}(x)$ appears earlier than $\pi_{\kappa}(y)$ on $\kappa$.
\end{proof}

The following lemma, called Behrstock's inequality, enables us to concatenate aligned paths. The author learned this fact from \cite[Lemma 2.5]{sisto2018contracting}, which was motivated by \cite[Theorem 4.3]{behrstock2006asymptotic} about subsurface projections. In our context, it is a direct consequence of the $\delta$-hyperbolicity of $\mathcal{C}(\Sigma)$. See Lemma \ref{lem:Behr} for the proof.

\begin{fact}\label{fact:Behr}
Let $x \in \mathcal{C}(\Sigma)$ and let $(\gamma_{1}, \gamma_{2})$ be a $K$-aligned sequence of geodesics in $\mathcal{C}(\Sigma)$. Then either $(x, \gamma_{2})$ is $(K+60\delta)$-aligned or $(\gamma_{1}, x)$ is $(K+60\delta)$-aligned.
\end{fact}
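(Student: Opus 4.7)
The plan is to proceed by contradiction. Assume that neither $(x, \gamma_{2})$ nor $(\gamma_{1}, x)$ is $(K+60\delta)$-aligned. Since $x$ is a degenerate geodesic (a single point), the non-trivial parts of Definition \ref{dfn:align} reduce to projection conditions on $p_{1} := \pi_{\gamma_{1}}(x)$ and $p_{2} := \pi_{\gamma_{2}}(x)$: the hypothesis becomes $d(p_{1}, e_{1}) > K + 60\delta$ and $d(p_{2}, b_{2}) > K + 60\delta$, where $e_{1}$ and $b_{2}$ denote the terminal point of $\gamma_{1}$ and the initial point of $\gamma_{2}$. Now pick $q_{1} \in \pi_{\gamma_{1}}(p_{2})$ and $q_{2} \in \pi_{\gamma_{2}}(p_{1})$; since $p_{i} \in \gamma_{i}$, these lie in $\pi_{\gamma_{1}}(\gamma_{2})$ and $\pi_{\gamma_{2}}(\gamma_{1})$ respectively, so the $K$-alignment of $(\gamma_{1}, \gamma_{2})$ forces $d(q_{1}, e_{1}), d(q_{2}, b_{2}) < K$. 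Combined with the standing assumption, this yields $d(p_{1}, q_{1}), d(p_{2}, q_{2}) > 60\delta$, in particular well above the $20\delta$ threshold of Fact \ref{fact:hyperbolic}(2).

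The heart of the proof is to apply Fact \ref{fact:hyperbolic}(2) twice. First, on the pair $(x, p_{2})$ projected to $\gamma_{1}$: the geodesic $[x, p_{2}]_{\mathcal{C}}$ carries a subsegment $20\delta$-fellow traveling with $[p_{1}, q_{1}]_{\gamma_{1}}$, and collecting the triangle-inequality slack at both endpoints of that subsegment gives
\[
d(x, p_{2}) \ge d(x, p_{1}) + d(p_{1}, q_{1}) + d(q_{1}, p_{2}) - 80\delta.
\]
Symmetrically, applying Fact \ref{fact:hyperbolic}(2) to $(x, p_{1})$ projected to $\gamma_{2}$ gives
\[
d(x, p_{1}) \ge d(x, p_{2}) + d(p_{2}, q_{2}) + d(q_{2}, p_{1}) - 80\delta.
\]
Without loss of generality $d(x, p_{1}) \ge d(x, p_{2})$ (otherwise exchange the roles of the indices throughout). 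Substituting into the first displayed inequality and invoking $d(p_{1}, q_{1}) > 60\delta$ forces $d(q_{1}, p_{2}) < 20\delta$. The coarse Lipschitz bound of Fact \ref{fact:hyperbolic}(1) then gives $d(p_{2}, \pi_{\gamma_{2}}(q_{1})) \le d(p_{2}, q_{1}) + 20\delta < 40\delta$, and since $q_{1} \in \gamma_{1}$ makes $\pi_{\gamma_{2}}(q_{1}) \in \pi_{\gamma_{2}}(\gamma_{1})$ sit within $K$ of $b_{2}$ by alignment, we conclude $d(p_{2}, b_{2}) < K + 40\delta$. This contradicts $d(p_{2}, b_{2}) > K + 60\delta$.

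The main subtlety will be the directional bookkeeping when invoking Fact \ref{fact:hyperbolic}(2): I must argue that the subsegment of $[x, p_{2}]_{\mathcal{C}}$ fellow-traveling with $[p_{1}, q_{1}]_{\gamma_{1}}$ meets the $p_{1}$-side before the $q_{1}$-side when traversed from $x$ toward $p_{2}$, so that the length decomposes additively as $d(x, p_{1}) + d(p_{1}, q_{1}) + d(q_{1}, p_{2})$ up to $O(\delta)$ rather than partially overlapping. This is intuitive because $p_{1} = \pi_{\gamma_{1}}(x)$ is the closest-point projection from the $x$ side and $q_{1} = \pi_{\gamma_{1}}(p_{2})$ from the $p_{2}$ side, but it deserves a careful sentence in the writeup. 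Beyond this, the argument is essentially a symmetric two-sided application of the projection behavior and the contradiction is clean.
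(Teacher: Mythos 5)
Your proof is correct, but it differs from the paper's argument. The paper parametrizes $[p_{2},x]$ once, synchronizes it with $\gamma_{2}$ up to a time $A$ (via Lemma \ref{lem:projApprox} and thinness), separately locates a subsegment $\eta|_{[B,C]}$ fellow-travelling a long piece of $\gamma_{1}$, and then does a case analysis on whether $A\le C$ or $A\ge C$, each branch contradicting alignment directly. You instead apply Fact \ref{fact:hyperbolic}(2) twice in a symmetric way — once with target $\gamma_{1}$ and endpoints $(x,p_{2})$, once with target $\gamma_{2}$ and endpoints $(x,p_{1})$ — to obtain two additive length lower bounds, and then a clean comparison (WLOG $d(x,p_{1})\ge d(x,p_{2})$) forces the excursion $d(q_{1},p_{2})<20\delta$, which contradicts alignment via Fact \ref{fact:hyperbolic}(1). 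Your version is somewhat tidier and more symmetric in $\gamma_{1},\gamma_{2}$, while the paper's tracks a single geodesic $[p_{2},x]$ and its two projections more concretely; both buy the same constant $K+60\delta$. One remark on the ``directional bookkeeping'' subtlety you flagged: it is not actually a gap. Whichever endpoint of the fellow-travelling subsegment of $[x,p_{2}]$ lands near $p_{1}$ and whichever lands near $q_{1}$, the three lower bounds $d(x,u)\ge d(x,p_{1})-20\delta$, $d(u,v)\ge d(p_{1},q_{1})-40\delta$, $d(v,p_{2})\ge d(q_{1},p_{2})-20\delta$ hold in either orientation, because $p_{1}$ is the nearest-point projection of $x$ (so $d(x,q_{1})\ge d(x,p_{1})$) and $q_{1}$ is that of $p_{2}$ (so $d(p_{2},p_{1})\ge d(p_{2},q_{1})$); in both cases the additive estimate $d(x,p_{2})\ge d(x,p_{1})+d(p_{1},q_{1})+d(q_{1},p_{2})-80\delta$ follows. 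Finally, be careful to pick \emph{points} $p_{1}\in\pi_{\gamma_{1}}(x)$, $p_{2}\in\pi_{\gamma_{2}}(x)$ witnessing the failure of alignment (these projections are sets), but this is a cosmetic fix.
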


Fact \ref{fact:Behr} has the following consequences.

\begin{lem}\label{lem:Behrstock}
Let $n \ge 3$ and let $(\gamma_{1}, \ldots, \gamma_{n})$ be a $K$-aligned sequence of geodesics in $\mathcal{C}(\Sigma)$, where $\gamma_{2}, \ldots, \gamma_{n-1}$ are longer than $2K + 120\delta$. Then $(\gamma_{i}, \gamma_{j})$ is $(K+60\delta)$-aligned for each $1 \le i < j \le n$.
\end{lem}

\begin{proof}
We will prove that $\pi_{\gamma_{i}}(\gamma_{j})$ is contained in the $(K+60\delta)$-neighborhood of the ending point of $\gamma_{i}$, by induction on $i = j-1, \ldots, 1$. For $i = j-1$, the alignment is given by the assumption. 

Suppose that $\pi_{\gamma_{i}}(\gamma_{j})$ is contained in the $(K+60\delta)$-neighborhood of the ending point of $\gamma_{i}$ for some $1 < i < j$. This means, for each $p \in \gamma_{j}$, that $\pi_{\gamma_{i}}(p)$ is not $(K+60\delta)$-close to the beginning point of $\gamma_{i}$ (since $\gamma_{i}$ is longer than $2K+120\delta$). Hence, $(p, \gamma_{i})$ is not $(K+60\delta)$-aligned. Meanwhile, $(\gamma_{i-1}, \gamma_{i})$ is $K$-aligned. By Fact \ref{fact:Behr}, $(\gamma_{i-1}, p)$ is $(K+60\delta)$-aligned. Therefore, $\pi_{\gamma_{i-1}}(\gamma_{j})$ is contained in the $(K+60\delta)$-neighborhood of the ending point of $\gamma_{i-1}$. The induction step is established.

An analogous argument shows that $\pi_{\gamma_{j}}(\gamma_{i})$ is contained in the $(K+60\delta)$-neighborhood of the beginning point of $\gamma_{j}$. The conclusion follows.
\end{proof}

\begin{lem}\label{lem:GromProdFellow}
Let $K>0$, let $x, y \in \mathcal{C}(\Sigma)$ and let $\gamma_{1}, \ldots, \gamma_{n}$ be geodesics in $\mathcal{C}(\Sigma)$, longer than $2K+140\delta$ each, such that $(x, \gamma_{1}, \ldots, \gamma_{n}, y)$ is $K$-aligned.

Then there exists a subsegment $\gamma_{i}'$ of $\gamma_{i}$ for $i=1, \ldots, n$ and 
there exist disjoint subsegments $\eta_{1}, \ldots, \eta_{n}$ of $[x, y]_{\mathcal{C}}$ such that \begin{enumerate}
\item $\eta_{1}, \ldots, \eta_{n}$ are in order from left to right along $[x, y]_{\mathcal{C}}$, i.e., $\eta_{i}$ appears earlier than $\eta_{i+1}$ along $[x, y]_{\mathcal{C}}$ for each $i=1, \ldots, n-1$;
\item $\gamma_{i}'$ and $\gamma_{i}$ are $(K+60\delta)$-fellow traveling for each $i=1, \ldots, n$, and 
\item $\gamma_{i}'$ and  $\eta_{i}$ are $20\delta$-fellow traveling for $i=1, \ldots, n$.
\end{enumerate}
\end{lem}

\begin{proof}
We consider a slightly modified statement: we assert that, if $\gamma_{i}$'s are longer than $2K+140\delta$, $(x, \gamma_{1})$ is $(K + 60\delta)$-aligned and $(\gamma_{1}, \ldots, \gamma_{n}, y)$ is $K$-aligned, then the conclusion holds.

We will prove this by induction. When $n = 1$, this follows from Fact \ref{fact:hyperbolic}(2). Now suppose that the claim holds for $n=m$, and consider the case $n=m+1$. By Lemma \ref{lem:Behrstock}, $(\gamma_{1}, y)$ is $(K+60\delta)$-aligned. Let us pick $p \in \pi_{\gamma_{1}}(x)$ and $q \in \pi_{\gamma_{1}}(y)$, and set $\gamma_{1}' := [p, q]_{\gamma_{1}}$. Since $(x, \gamma_{1}, y)$ is $(K+60\delta)$-aligned, $\gamma_{1}$ and $\gamma_{1}'$ are $(K+60\delta)$-fellow traveling.

Moreover, $p$ and $q$ are at least $20\delta$-apart since $\gamma_{1}$ is longer than $2K + 140\delta$. By Fact \ref{fact:hyperbolic}(2), there exists a subsegment $\eta_{1}$ of $[x, y]_{\mathcal{C}}$ that is $20\delta$-fellow traveling with $\gamma_{1}' := [p, q]$. Let $x'$ be the ending point of $\eta_{1}$. Then $d_{\mathcal{C}}(x', \gamma_{1}) < 20\delta$, and $(x', \gamma_{2})$ is $(K + 60\delta)$-aligned by Fact \ref{fact:hyperbolic}(1).

We can now apply the induction hypothesis to obtain subsegments $\gamma_{2}', \ldots, \gamma_{n}'$ of $\gamma_{2}, \ldots, \gamma_{n}$, respectively, and subsegments $\eta_{2}', \ldots, \eta_{n}'$ (in the desired order) of $[x', y]_{\mathcal{C}}$. Since $[x', y]$ is a subsegment of $[x, y]_{\mathcal{C}}$ that comes later than $\eta_{1}'$, we can conclude the statement for $n=m+1$.
\end{proof}

\section{Concatenation of weakly contracting geodesics}\label{section:concat}

In this section, we will prove two facts about concatenations of  $\varphi$-orbit sequences that rely on their weakly contracting property. These facts also appeared in \cite{choi2022random1} but we provide different proofs for the reader's convenience. 

\begin{prop}[{\cite[Lemma 2.11]{choi2022random1}}]\label{prop:weakConcat1}
For each $K>0$, the following holds for $\LMap := \frac{2\cdot 10^{6}}{D_{\mathcal{C}}} (K_{map}^{6} + K)$. Let $g, h\in \Mod(\Sigma)$ and let $\gamma$ be a $\varphi$-orbit sequence of even length greater than $L$ such that $\big( gx_{0}, \,\Proj(\gamma), \, hx_{0} \big)$ is $K$-aligned. Then there exists $p \in [g, h]_{S}$ such that $\pi_{\gamma}(p)$ lies in the middle one-third of $\Proj \gamma$ and such that \[
d_{S}(p, \textrm{midpoint of $\gamma$}) \le \frac{1}{100} d_{S}(g, \textrm{midpoint of $\gamma$}) +  \frac{1}{100}d_{S}( \textrm{midpoint of $\gamma$}, h).
\]
\end{prop}

\begin{remark}
There is nothing special about the constant $1/100$; it can be replaced by any $0<\epsilon < 1$ at the cost of adjusting $L$. The conclusion holds true for $\varphi$-orbit sequences of odd length as well.
\end{remark}

\begin{proof}
Recall that  $K_{map} \ge 1000(\delta+D_{\mathcal{C}}+1)$ holds; in particular, we have $K_{map}^{2} \ge 1000K_{map}$.

Let $T := 103K_{map}^{3} /D_{\mathcal{C}}$; note that $T$ is an integer. Let  $L'$ be an even integer greater than $\LMap := \frac{2\cdot 10^{6}}{D_{\mathcal{C}}} (K_{map}^{6} + K )$. Recall that $\gamma$ is of the form $\gamma = (a, a\varphi, \ldots, a\varphi^{L'})$ for some $a \in \Mod(\Sigma)$.
Let $q := a \varphi^{ L'/2 } $ be the midpoint of $\gamma$. Pick an element $p \in \pi_{\gamma}(gx_{0})$. By the assumption, we have \[
d_{\mathcal{C}}(qx_{0}, p) =d_{\mathcal{C}}\left(ax_{0}, \,a \varphi^{L'/2} x_{0}\right) - d_{\mathcal{C}}(ax_{0},p) \ge \frac{D_{\mathcal{C}} L'}{2} - K \ge 10^{6} \cdot K_{map}^{6}\ge 20\delta.
\]
Fact \ref{fact:hyperbolic}(2) tells us that the geodesic $[gx_{0}, qx_{0}]_{\mathcal{C}}$ contains a subsegment $\kappa$ that $20\delta$-fellow travels with $[p, qx_{0}]_{\mathcal{C}}$. Here, the $d_{\mathcal{C}}$-length of $\kappa$ is at least \[
\frac{D_{\mathcal{C}} L'}{2} -K - 40\delta  \ge 10^{6}\big(K_{map}^{6} + K \big) -K- 40\delta > (10^{6} - 1) K_{map}^{6} \ge 6000K_{map}^{3} (1 + D_{\mathcal{C}}T).
\] We conclude that \[
d_{S}(g, q) \ge \frac{1}{K_{map}} d_{\mathcal{C}}(gx_{0}, qx_{0}) > 6000K_{map}^{2}(1+ D_{\mathcal{C}}T).
\]
Similarly, $d_{S}(h, q)$ is greater than $6000K_{map}^{2}(1 + D_{\mathcal{C}}T)$.

Let $\eta: \{0, 1, \ldots, d_{S}(g, h)\} \rightarrow \Mod(\Sigma)$ be the map representing the geodesic $[g, h]_{S}$. Let \[\begin{aligned}
A_{1} &:= \Big\{ i : \textrm{$\pi_{\gamma}(\eta(i))$ intersects $\big[ax_{0},\, a \varphi^{L'/2 - T} x_{0}\big]$} \Big\}, \\
A_{2} 
&:=  \Big\{ i : \textrm{$\pi_{\gamma}(\eta(i))$ intersects $\big[a \varphi^{L'/2+ T}x_{0},\, a \varphi^{L'} x_{0}\big]$}\Big\}, \\
B &:= \big\{0, 1, \ldots, d_{S}(g, h)\big\} \setminus (A_{1} \cup A_{2}) = \Big\{ i:  \textrm{$\pi_{\gamma}(\eta(i))$ is contained in $\big(a \varphi^{L'/2 -T}x_{0},\,a\varphi^{L'/2 + T} x_{0} \big)$}\Big\}.
\end{aligned}
\]
Since $L'/2 - T \ge L'/3 \ge K/D_{\mathcal{C}}$ and $(gx_{0}, \Proj(\gamma))$ is $K$-aligned, we have $\pi_{\gamma}(gx_{0}) \subseteq \big[a x_{0},  \, a \varphi^{L'/2 - T}x_{0}\big] $. In particular, $ A_{1}$ contains $0$ and is nonempty. Similarly, we observe $d_{S}(g, h)  \in A_{2}$. One cannot move from $A_{1}$ to $A_{2}$ with a single jump, because $\pi_{\gamma}$ is $K_{map}$-coarsely Lipschitz and \[
d_{\mathcal{C}} \left( \big[ax_{0}, \, a\varphi^{L'/2 - T} x_{0} \big],\,\big[a \varphi^{L'/2 + T}x_{0}, \,a \varphi^{L'} x_{0}\big] \right) = 2D_{\mathcal{C}}T > 2K_{map}.
\]
Hence, $B$ is nonempty. Since $0$ belongs to $A_{1}$ and $d_{S}(g, h)$ belongs to $A_{2}$, at least one connected component $B'$ of $B$ crosses from $A_{1}$ to $A_{2}$. That means, $B' = \{i_{1}, \ldots, i_{2}\}$ satisfies $i_{1} - 1 \in A_{1}$ and $i_{2} + 1 \in A_{2}$. The coarse Lipschitzness of $\pi_{\gamma}$ forces that $\pi_{\gamma}(\eta(i_{1}))$ is $K_{map}$-close to $a \varphi^{L'/2 - T} x_{0}$ and that $\pi_{\gamma}(\eta(i_{2}))$ is $K_{map}$-close to $a \varphi^{L'/2 + T} x_{0}$. ($\dagger$)

Now, let $j \in B'$ be the index realizing the $d_{S}$-distance between $\eta(B')$ and $\gamma$. Observe that \begin{equation}\label{eqn:inclusionWeakConcat}
\pi_{\gamma}(\eta(j)) \subseteq \pi_{\gamma}(\eta(B)) \subseteq \big[a^{L'/2-T} x_{0},\, a^{L'/2 + T } x_{0}\big] \subseteq N_{D_{\mathcal{C}} T}(qx_{0})
\end{equation}
falls into the middle one-third of $\Proj \gamma$. We will show that the choice $p := \eta(j)$ works.

Without loss of generality, suppose that $\pi_{\gamma}(p)$ is closer to $\pi_{\gamma}(\eta(i_{1}))$ than to $\pi_{\gamma}(\eta(i_{2}))$. (If not, swap the labels of $g$ and $h$ and reverse the orientations of $\gamma$ and $[g, h]_{S}$.) 
This means that \begin{equation}\label{eqn:weakContrEqn1}\begin{aligned}
\diam_{\mathcal{C}}\Big(\pi_{\gamma}(p) \cup \pi_{\gamma}\big(\eta(i_{2})\big) \Big) &\ge \frac{1}{2} \diam_{\mathcal{C}}\Big(\pi_{\gamma}\big(\eta(i_{1})\big)  \cup \pi_{\gamma}\big(\eta(i_{2})\big) \Big)\\
& \ge D_{\mathcal{C}}T - K_{map} \ge 102K_{map}^{3}. & (\because \dagger)
\end{aligned}
\end{equation}

Thanks to Lemma \ref{lem:Lipschitz} and the inclusion in Display \ref{eqn:inclusionWeakConcat}, we observe \begin{equation}\label{eqn:dspriorEqn}\begin{aligned}
d_{S}(p,  q) &\le K_{map}d_{S}(p, \gamma) + K_{map} \diam_{\mathcal{C}}(\pi_{\gamma}(p) \cup qx_{0}) + K_{map}\\
& \le K_{map} d_{S}(p, \gamma) + K_{map}D_{\mathcal{C}}T + K_{map}.
\end{aligned}
\end{equation}
If $d_{S}(p, \gamma) \le  102K_{map}(D_{\mathcal{C}}T + 1)$, then we can conclude  \[\begin{aligned}
d_{S}(p, q) &\le 102K_{map}^{2} ( D_{\mathcal{C}}T + 1)+ K_{map}D_{\mathcal{C}}T + K_{map} \\
& \le 103K_{map}^{2}(D_{\mathcal{C}}T +1) \le  \frac{1}{100}d_{S}(g, q) + \frac{1}{100}d_{S}(q, h).
\end{aligned}
\]

If not, we inductively define points $x_{i}$ on $[p, \eta(i_{2})]_{S} \subseteq [g, h]_{S}$ as follows. Take $x_{0} := p$, and given $x_{i}$, take $x_{i+1} \in [x_{i}, \eta(i_{2})]_{S}$ such that $d_{S}(x_{i+1}, x_{i}) =  d_{S}(x_{i}, \gamma)/K_{map}$ if such a point exists. If such a point does not exist (which means $d_{S}(x_{i}, \eta(i_{2})) < d_{S}(x_{i}, \gamma) / K_{map}$), we take $x_{i+1} = \eta(i_{2})$ and stop. 

 For each $i$, including the ending step, we have \begin{equation}\label{eqn:dseqn}
d_{S}(x_{i}, \gamma) \ge d_{S}(\eta(B'), \gamma) \ge d_{S}(p, \gamma) \ge K_{map}.
\end{equation} By the weakly contracting property of $\gamma$ (Proposition \ref{prop:duchin}), we have $\diam_{\mathcal{C}} (\pi_{\gamma}(x_{i-1}) \cup \pi_{\gamma}(x_{i}) ) \le K_{map}$. This fact, together with Inequality \ref{eqn:weakContrEqn1}, implies that the process does not halt until step $102K_{map}^{2}$. Hence, we have  \[\begin{aligned}
d_{S} (h, q) &\ge d_{S}(p, h) - d_{S}(p,q) \ge \sum_{i=1}^{102K_{map}^{2}} d_{S}(x_{i-1}, x_{i}) - d_{S}(p, q)  \\
&\ge 102K_{map}^{2} \cdot  \frac{1}{K_{map}} d_{S} (p, \gamma) - d_{S}(p, q)& (\because \,\textrm{Inequality}\,\,\ref{eqn:dseqn}) \\
& \ge 102 (  d_{S}(p, q) - K_{map}D_{\mathcal{C}} T - K_{map})  - d_{S}(p, q) & (\because \,\textrm{Inequality}\,\, \ref{eqn:dspriorEqn}) \\
&= 100d_{S}(p, q) + \big( d_{S}(p, q) - 102K_{map}(D_{C}T + 1) \big). \end{aligned}
\]
Since $d_{S}(p, \gamma) > 102K_{map}(D_{\mathcal{C}}T + 1)$ in this subcase, we conclude $d_{S}(h, q) \ge 100d_{S}(p, q)$.

\end{proof}

\begin{prop}[{\cite[Proposition 8.2]{choi2022random1}}]\label{prop:weakConcat2}
For each $K>0$, the following holds for $\LMap := \frac{3 \cdot 10^{6}}{D_{\mathcal{C}}}(K_{map}^{6} + K)$. Let $n> 0$, let $g, h \in \Mod(\Sigma)$ and let $\gamma_{i}$ be $\varphi$-orbit sequences of even length greater than $L$ such that \[
\Big(gx_{0},\, \Proj (\gamma_{1}),\,\Proj(\gamma_{2}),\, \ldots, \,\Proj(\gamma_{n}), \,hx_{0}\Big)
\]
is $K$-aligned. Then there exist $p_{1}, \ldots, p_{n}$ on $[g, h]_{S}$, in order from left to right, such that \[
d_{S}(p_{i}, q_{i}) \le \sum_{l=1}^{i} \frac{1}{30^{l}} d_{S}(q_{i-l}, q_{i-l+1}) +  \sum_{l=1}^{n-i+1} \frac{1}{30^{l}} d_{S}(q_{i+l-1},q_{i+l})
\]
for each $i \in \{1, \ldots, n\}$, where $q_{0} := g$, $q_{n+1} := h$ and $q_{i}:= (\textrm{midpoint of $\gamma_{i}$})$ for $i=1, \ldots, n$.
\end{prop}

\begin{figure}
\begin{tikzpicture}
\def\c{2}
\def\d{0.9}

\begin{scope}[shift={(0, 3*\c)}]
\draw[very thick] (-0.3*\c, 0) -- (0.8*\c, -0.21*\c) -- (0.1*\c, -0.75*\c) -- (0.77*\c, -0.9*\c) -- (0.3*\c, -0.39*\c) -- (1.02*\c, -0.41*\c) -- (0.9*\c, -1.2*\c) -- (0.28*\c, -1.7*\c) -- (0.38*\c, -1.17*\c) -- (1*\c, -1.77*\c) -- (1.48*\c, -1.5*\c) -- (1.15*\c, -1.42*\c) -- (1.67*\c, -2.02*\c) -- (2.2*\c, -1.82*\c)-- (1.84*\c, -1.55*\c) -- (2.6*\c, -1.35*\c) -- (2.57*\c, -0.57*\c) -- (2.25*\c, -1.6*\c) -- (2.7*\c, -1.87*\c) -- (2.88*\c, -2.29*\c) -- (3.4*\c, -2.02*\c) -- (3.1*\c, -1.85*\c) -- (3.28*\c, -2.25*\c) -- (3.78*\c, -1.9*\c) -- (3.67*\c, -0.86*\c) -- (3.28*\c, -0.76*\c) -- (3.25*\c, -1.36*\c) -- (3.96*\c, -1.43*\c) -- (4.18*\c, -2*\c) -- (4.01*\c, -2.35*\c) -- (4.45*\c, -2.37*\c) -- (4.82*\c, -2.12*\c) -- (4.4*\c, -1.76*\c) -- (4.94*\c, -1.6*\c) -- (5.08*\c, -1.03*\c) -- (5.4*\c, -1.48*\c) -- (5.42*\c, -0.83*\c) -- (5.02*\c, -1.33*\c) -- (5.4*\c, -1.92*\c) -- (5.93*\c, -1.63*\c) -- (5.81*\c, -2*\c) -- (5.58*\c, -1.43*\c) -- (6.2*\c, -1.28*\c) -- (6.28*\c, -0.6*\c) -- (6.7*\c, -0.9*\c) -- (6.18*\c, -1.02*\c) -- (6.9*\c, -0.6*\c) -- (7.3*\c, 0);
\end{scope}

\begin{scope}
\foreach \i in {1, ..., 4}{
\draw[line width=3.3] (1.7*\i*\c - 1.2*\c, 0) arc (110:70:1.7*\c);
\draw (1.7*\i*\c - 0.6*\c, -0.05*\c) node {$\gamma_{\i}$};
}
\end{scope}

\draw[fill, fill opacity=0.3, pattern={checkerboard}, shift={(0.48*\c, 0)}, rotate=105, ] (0, 0) -- (0, -0.09*\c) -- (3.11*\c, -0.09*\c) -- (3.11*\c, 0.09*\c) -- (0, 0.09*\c) -- cycle;
\draw[fill, fill opacity=0.3, pattern={checkerboard}, shift={(1.08*\c, 0.08*\c)}, rotate=90, ] (0, 0) -- (0, -0.09*\c) -- (0.8*\c, -0.09*\c) -- (0.8*\c, 0.09*\c) -- (0, 0.09*\c) -- cycle;
\draw[fill, fill opacity=0.3, pattern={checkerboard}, shift={(2.8*\c, 0.08*\c)}, rotate=85, ] (0, 0) -- (0, -0.09*\c) -- (0.25*\c, -0.09*\c) -- (0.25*\c, 0.09*\c) -- (0, 0.09*\c) -- cycle;
\draw[fill, fill opacity=0.3, pattern={checkerboard}, shift={(4.48*\c, 0.08*\c)}, rotate=90, ] (0*\c, 0) -- (0*\c, -0.09*\c) -- (0.15*\c, -0.09*\c) -- (0.15*\c, 0.09*\c) -- (0*\c, 0.09*\c) -- cycle;
\draw[fill, fill opacity=0.3, pattern={checkerboard}, shift={(6.15*\c, 0.08*\c)}, rotate=110]  (0*\c, 0) -- (0*\c, -0.09*\c) -- (0.1*\c, -0.09*\c) -- (0.1*\c, 0.09*\c) -- (0*\c, 0.09*\c) -- cycle;

\draw[fill, fill opacity=0.2, shift={(0.48*\c, 0)}, rotate=105, ] (0, 0) -- (0, -0.09*\c) -- (3.11*\c, -0.09*\c) -- (3.11*\c, 0.09*\c) -- (0, 0.09*\c) -- cycle;
\draw[fill, fill opacity=0.2, shift={(1.08*\c, 0.08*\c)}, rotate=90, ] (0, 0) -- (0, -0.09*\c) -- (0.8*\c, -0.09*\c) -- (0.8*\c, 0.09*\c) -- (0, 0.09*\c) -- cycle;
\draw[fill, fill opacity=0.2, shift={(2.8*\c, 0.08*\c)}, rotate=85, ] (0, 0) -- (0, -0.09*\c) -- (0.25*\c, -0.09*\c) -- (0.25*\c, 0.09*\c) -- (0, 0.09*\c) -- cycle;
\draw[fill, fill opacity=0.2, shift={(4.48*\c, 0.08*\c)}, rotate=90, ] (0*\c, 0) -- (0*\c, -0.09*\c) -- (0.15*\c, -0.09*\c) -- (0.15*\c, 0.09*\c) -- (0*\c, 0.09*\c) -- cycle;
\draw[fill, fill opacity=0.2, shift={(6.15*\c, 0.08*\c)}, rotate=110]  (0*\c, 0) -- (0*\c, -0.09*\c) -- (0.1*\c, -0.09*\c) -- (0.1*\c, 0.09*\c) -- (0*\c, 0.09*\c) -- cycle;

\draw[fill, fill opacity=0.5, pattern={dots},shift={(6.8*\c, 0*\c)}, rotate=81]  (0, 0) -- (0, -0.09*\c) -- (3.01*\c, -0.09*\c) -- (3.01*\c, 0.09*\c) -- (0, 0.09*\c) -- cycle;
\draw[fill opacity=0.5, pattern={dots}, shift={(1.08*\c, 0.09*\c)}, rotate=90, ] (1.03*\c, 0) -- (1.03*\c, -0.09*\c) -- (1.13*\c, -0.09*\c) -- (1.13*\c, 0.09*\c) -- (1.03*\c, 0.09*\c) -- cycle;
\draw[fill opacity=0.5, pattern={dots}, shift={(2.8*\c, 0.08*\c)}, rotate=85, ] (0.46*\c, 0) -- (0.46*\c, -0.09*\c) -- (0.62*\c, -0.09*\c) -- (0.62*\c, 0.09*\c) -- (0.46*\c, 0.09*\c) -- cycle;
\draw[fill opacity=0.5, pattern={dots}, shift={(4.48*\c, 0.18*\c)}, rotate=90, ] (0.23*\c, 0) -- (0.23*\c, -0.09*\c) -- (0.44*\c, -0.09*\c) -- (0.44*\c, 0.09*\c) -- (0.23*\c, 0.09*\c) -- cycle;
\draw[fill opacity=0.5, pattern={dots}, shift={(6.15*\c, 0.08*\c)}, rotate=110]  (0.3*\c, 0) -- (0.3*\c, -0.09*\c) -- (0.95*\c, -0.09*\c) -- (0.95*\c, 0.09*\c) -- (0.3*\c, 0.09*\c) -- cycle;

\draw[fill, fill opacity=0.18, shift={(6.8*\c, 0*\c)}, rotate=81]  (0, 0) -- (0, -0.09*\c) -- (3.01*\c, -0.09*\c) -- (3.01*\c, 0.09*\c) -- (0, 0.09*\c) -- cycle;
\draw[fill, fill opacity=0.18, shift={(1.08*\c, 0.09*\c)}, rotate=90, ] (1.03*\c, 0) -- (1.03*\c, -0.09*\c) -- (1.13*\c, -0.09*\c) -- (1.13*\c, 0.09*\c) -- (1.03*\c, 0.09*\c) -- cycle;
\draw[fill, fill opacity=0.18, shift={(2.8*\c, 0.08*\c)}, rotate=85, ] (0.46*\c, 0) -- (0.46*\c, -0.09*\c) -- (0.62*\c, -0.09*\c) -- (0.62*\c, 0.09*\c) -- (0.46*\c, 0.09*\c) -- cycle;
\draw[fill, fill opacity=0.18, shift={(4.48*\c, 0.18*\c)}, rotate=90, ] (0.23*\c, 0) -- (0.23*\c, -0.09*\c) -- (0.44*\c, -0.09*\c) -- (0.44*\c, 0.09*\c) -- (0.23*\c, 0.09*\c) -- cycle;
\draw[fill, fill opacity=0.18, shift={(6.15*\c, 0.08*\c)}, rotate=110]  (0.3*\c, 0) -- (0.3*\c, -0.09*\c) -- (0.95*\c, -0.09*\c) -- (0.95*\c, 0.09*\c) -- (0.3*\c, 0.09*\c) -- cycle;

\draw[pattern={grid}, fill opacity=0.5, shift={(1.66*\c, 0)}] (0, 0) -- (0, -0.09*\c) -- (0.54*\c, -0.09*\c)--(0.54*\c, 0.09*\c) -- (0, 0.09*\c) -- cycle;

\draw[fill, pattern={grid}, fill opacity=0.5, shift={(1.08*\c, 0.08*\c)}, rotate=90, ] (0.8*\c, 0) -- (0.8*\c, -0.09*\c) -- (0.92*\c, -0.09*\c) -- (0.92*\c, 0.09*\c) -- (0.8*\c, 0.09*\c) -- cycle;
\draw[fill, pattern={grid}, fill opacity=0.5, shift={(2.8*\c, 0.08*\c)}, rotate=85, ] (0.25*\c, 0) -- (0.25*\c, -0.09*\c) -- (0.33*\c, -0.09*\c) -- (0.33*\c, 0.09*\c) -- (0.25*\c, 0.09*\c) -- cycle;
\draw[fill, pattern={grid}, fill opacity=0.5, shift={(4.48*\c, 0.08*\c)}, rotate=90, ] (0.15*\c, 0) -- (0.15*\c, -0.09*\c) -- (0.21*\c, -0.09*\c) -- (0.21*\c, 0.09*\c) -- (0.15*\c, 0.09*\c) -- cycle;
\draw[fill, pattern={grid}, fill opacity=0.5,, shift={(6.15*\c, 0.08*\c)}, rotate=110]  (0.1*\c, 0) -- (0.1*\c, -0.09*\c) -- (0.17*\c, -0.09*\c) -- (0.17*\c, 0.09*\c) -- (0.1*\c, 0.09*\c) -- cycle;

\draw[fill, fill opacity=0.27, shift={(3.36*\c, 0)}] (0, 0) -- (0, -0.09*\c) -- (0.54*\c, -0.09*\c)--(0.54*\c, 0.09*\c) -- (0, 0.09*\c) -- cycle;

\draw[fill, fill opacity=0.27, shift={(1.08*\c, 0.08*\c)}, rotate=90, ] (0.92*\c, 0) -- (0.92*\c, -0.09*\c) -- (0.99*\c, -0.09*\c) -- (0.99*\c, 0.09*\c) -- (0.92*\c, 0.09*\c) -- cycle;
\draw[fill, fill opacity=0.27, shift={(2.8*\c, 0.08*\c)}, rotate=85, ] (0.33*\c, 0) -- (0.33*\c, -0.09*\c) -- (0.4*\c, -0.09*\c) -- (0.4*\c, 0.09*\c) -- (0.33*\c, 0.09*\c) -- cycle;
\draw[fill, fill opacity=0.27, shift={(4.48*\c, 0.08*\c)}, rotate=90, ] (0.21*\c, 0) -- (0.21*\c, -0.09*\c) -- (0.27*\c, -0.09*\c) -- (0.27*\c, 0.09*\c) -- (0.21*\c, 0.09*\c) -- cycle;
\draw[fill, fill opacity=0.27,, shift={(6.15*\c, 0.08*\c)}, rotate=110]  (0.17*\c, 0) -- (0.17*\c, -0.09*\c) -- (0.23*\c, -0.09*\c) -- (0.23*\c, 0.09*\c) -- (0.17*\c, 0.09*\c) -- cycle;

\draw[ fill opacity=0.6, shift={(5.06*\c, 0)}] (0, 0) -- (0, -0.09*\c) -- (0.54*\c, -0.09*\c)--(0.54*\c, 0.09*\c) -- (0, 0.09*\c) -- cycle;

\draw[ fill opacity=0.2, shift={(1.08*\c, 0.08*\c)}, rotate=90, ] (0.99*\c, 0) -- (0.99*\c, -0.09*\c) -- (1.03*\c, -0.09*\c) -- (1.03*\c, 0.09*\c) -- (0.99*\c, 0.09*\c) -- cycle;
\draw[ fill opacity=0.2,  shift={(2.8*\c, 0.08*\c)}, rotate=85, ] (0.4*\c, 0) -- (0.4*\c, -0.09*\c) -- (0.46*\c, -0.09*\c) -- (0.46*\c, 0.09*\c) -- (0.4*\c, 0.09*\c) -- cycle;
\draw[ fill opacity=0.2, shift={(4.48*\c, 0.08*\c)}, rotate=90, ] (0.27*\c, 0) -- (0.27*\c, -0.09*\c) -- (0.33*\c, -0.09*\c) -- (0.33*\c, 0.09*\c) -- (0.27*\c, 0.09*\c) -- cycle;
\draw[ fill opacity=0.2,, shift={(6.15*\c, 0.08*\c)}, rotate=110]  (0.23*\c, 0) -- (0.23*\c, -0.09*\c) -- (0.3*\c, -0.09*\c) -- (0.3*\c, 0.09*\c) -- (0.23*\c, 0.09*\c) -- cycle;

\draw(-0.3*\c, 3.17*\c) node {$g$};
\draw (7.3*\c, 3.17*\c) node {$h$};
\draw (3.5*\c, 2.44*\c) node {$[g, h]_{S}$};

\end{tikzpicture}
\caption{Schematics for Proposition \ref{prop:weakConcat2}. When $(gx_{0}, \Proj \gamma_{1}, \ldots, \Proj \gamma_{n}, hx_{0})$ is aligned,  some part of $[g, h]_{S}$ is brought close to $\gamma_{i}$'s. Note that the contribution of $d_{S}(\gamma_{i}, \gamma_{i+1})$ to the threshold of $d_{S}([g, h]_{S}, \gamma_{j})$ decays exponentially in $|i-j|$. }
\label{fig:weakConcat2}
\end{figure}
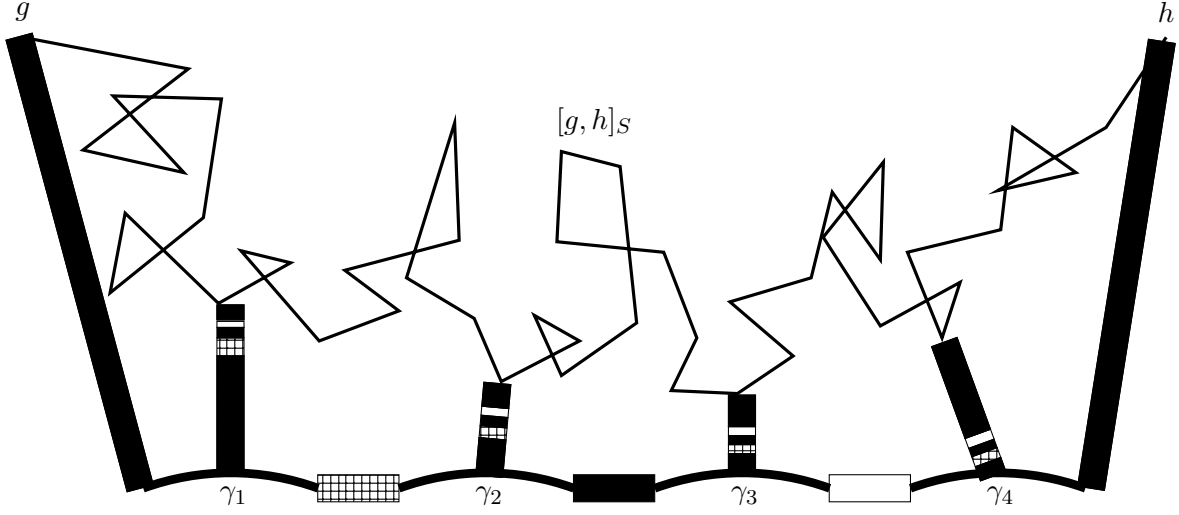

\begin{proof}
Our choice of $L$ is designed for the following inequality: \[
\LMap \ge \frac{2 \cdot 10^{6}}{D_{\mathcal{C}}}(K_{map}^{6} +K) + \frac{ 10^{6}}{D_{\mathcal{C}}} \cdot K_{map} \ge \frac{2 \cdot 10^{6}}{D_{\mathcal{C}}}\big(K_{map}^{6} +K+60\delta\big).
\]
As in the proof of Lemma \ref{lem:GromProdFellow}, we will prove a stronger statement by induction on $n$: 
\begin{claim}\label{claim:inducClaimWeak}
Let $L'$ be an even integer greater than $\LMap$, let $n>0$, let $g, h \in \Mod(\Sigma)$ and let $\gamma_{i}$ be $\varphi$-orbit sequences of length $L'$ such that $\big(gx_{0}, \,\Proj(\gamma_{1})\big)$ is $(K+60\delta)$-aligned and\[
\Big( \Proj(\gamma_{1}), \, \ldots, \, \Proj(\gamma_{n}),\, hx_{0} \Big)
\]
is $K$-aligned. Then there exist points $p_{1}, \ldots, p_{n} \in [g, h]_{S}$, in order from left to right, such that $\pi_{\gamma_{i}}(p_{i})$ lies in the middle one-third of $\Proj(\gamma_{i})$ and such that
\begin{equation}\label{eqn:nGenCaseWeak}
d_{S}(p_{i}, q_{i}) \le \frac{1}{2 \cdot 30^{i}} d_{S}(g, q_{1})+\sum_{l=1}^{i-1} \frac{1}{30^{l}} \left( 1- \frac{1}{30^{2(i-l)}} \right) d_{S}(q_{i-l}, q_{i-l+1}) +  \sum_{l=1}^{n-i+1} \frac{1}{30^{l}} \left( 1- \frac{1}{30^{2i}} \right) d_{S}(q_{i+l-1}, q_{i+l})
\end{equation}
for each $i \in \{1, \ldots, n\}$.
\end{claim}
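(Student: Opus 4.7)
The plan is to prove the claim by induction on $n$, using Proposition \ref{prop:weakConcat1} to peel off one $\varphi$-orbit sequence at each step. The strengthened hypothesis---with $(gx_0, \Proj(\gamma_1))$ required to be only $(K+60\delta)$-aligned while $(\Proj(\gamma_1), \ldots, \Proj(\gamma_n), hx_0)$ is $K$-aligned---is arranged so that a single application of Behrstock's inequality (Fact \ref{fact:Behr}) in the inductive step produces a configuration matching the hypothesis for length $n-1$.

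For the base case $n=1$, I observe that the threshold $\LMap$ majorizes the one appearing in Proposition \ref{prop:weakConcat1} taken with alignment constant $K+60\delta$. Hence Proposition \ref{prop:weakConcat1} applies directly and yields a point $p_1 \in [g, h]_S$ with $\pi_{\gamma_1}(p_1)$ in the middle third of $\Proj(\gamma_1)$ and $d_S(p_1, q_1) \le \tfrac{1}{100} d_S(g, q_1) + \tfrac{1}{100} d_S(q_1, h)$. Since $\tfrac{1}{100} < \tfrac{1}{60}$ and $\tfrac{1}{100} < \tfrac{1}{30}\bigl(1 - \tfrac{1}{900}\bigr)$, this comfortably fits within the $n=1$ target.

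For the inductive step, I first peel off $\gamma_1$: Lemma \ref{lem:Behrstock} upgrades the hypothesis to a $(K+60\delta)$-alignment of $(gx_0, \Proj(\gamma_1), hx_0)$, so Proposition \ref{prop:weakConcat1} produces $p_1 \in [g,h]_S$ with $\pi_{\gamma_1}(p_1)$ in the middle third of $\Proj(\gamma_1)$ and the $\tfrac{1}{100}$-bound. Because $\pi_{\gamma_1}(p_1)$ sits well inside the middle third---its distance from the ending point of $\Proj(\gamma_1)$ exceeds $K+60\delta$ thanks to the size of $\LMap$---the pair $(\Proj(\gamma_1), p_1 x_0)$ fails to be $(K+60\delta)$-aligned. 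Fact \ref{fact:Behr} applied to the $K$-aligned pair $(\Proj(\gamma_1), \Proj(\gamma_2))$ then forces $(p_1 x_0, \Proj(\gamma_2))$ to be $(K+60\delta)$-aligned, matching the hypothesis of the claim for the shorter sequence $(\gamma_2, \ldots, \gamma_n)$ with endpoints $p_1, h$. The induction hypothesis supplies the desired $p_2, \ldots, p_n \in [p_1, h]_S \subseteq [g, h]_S$ with $\pi_{\gamma_i}(p_i)$ in the middle third of $\Proj(\gamma_i)$.

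The last step is to translate the inductive bounds, which are expressed in terms of $d_S(p_1, q_2)$ and indices shifted by one, back into the form of the claim. Substituting $d_S(p_1, q_2) \le d_S(p_1, q_1) + d_S(q_1, q_2)$ together with the first-step bound on $d_S(p_1, q_1)$ converts everything into the original quantities $d_S(g, q_1)$ and the consecutive distances $d_S(q_j, q_{j+1})$. The hard part will be the ensuing coefficient verification: checking that each resulting coefficient is dominated by the target $\tfrac{1}{2 \cdot 30^i}$, $\tfrac{1}{30^l}\bigl(1-\tfrac{1}{30^{2(i-l)}}\bigr)$, or $\tfrac{1}{30^l}\bigl(1-\tfrac{1}{30^{2i}}\bigr)$. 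The correction factors $\bigl(1-\tfrac{1}{30^{2k}}\bigr)$---absent from the cleaner Proposition \ref{prop:weakConcat2}---are precisely what produce the geometric slack $\tfrac{899}{30^{2(i-l)}}$ needed to absorb the $\tfrac{1}{200 \cdot 30^{i-1}}$-weighted contributions inherited from Proposition \ref{prop:weakConcat1}, the gap between the decay base $\tfrac{1}{30}$ and the constant $\tfrac{1}{100}$ providing the necessary headroom.
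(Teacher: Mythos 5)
Your inductive step diverges from the paper's in a way that creates a genuine gap. You peel off $\gamma_1$ first by applying Proposition \ref{prop:weakConcat1} directly to the triple $(gx_0, \Proj\gamma_1, hx_0)$, obtaining $d_S(p_1, q_1) \le \frac{1}{100}\bigl(d_S(g, q_1) + d_S(q_1, h)\bigr)$. This attaches a flat coefficient $\frac{1}{100}$ to $d_S(q_1, h)$, and when $d_S(q_1, h)$ is decomposed as $\sum_{l=1}^{n} d_S(q_l, q_{l+1})$, each summand inherits that coefficient $\frac{1}{100}$ regardless of $l$. But Inequality (\ref{eqn:nGenCaseWeak}) with $i=1$ requires the coefficient of $d_S(q_l, q_{l+1})$ to be $\frac{1}{30^{l}}\bigl(1 - \frac{1}{30^{2}}\bigr)$, which decays geometrically; already for $l = 2$ this target is about $0.0011$, while you have $0.01$. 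The slack from the $(1 - 30^{-2k})$ factors is $O(30^{-2(i-l)})$ and cannot absorb a term that does not decay at all. The same flat $\frac{1}{100}\, d_S(q_1, h)$ propagates to the bounds for $p_i$, $i \ge 2$, through the $\frac{1}{2 \cdot 30^{i-1}} d_S(p_1, q_2)$ term in the shifted induction hypothesis, and the coefficient check again fails once $n$ is large.

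The paper avoids this by invoking the induction hypothesis \emph{twice}. It first applies the IH to $(g, \gamma_2, \ldots, \gamma_n, h)$ to produce a reference point $P \in [g, h]_S$ with $d_S(P, q_2)$ already controlled with geometrically decaying coefficients. Then $(gx_0, \Proj\gamma_1, Px_0)$ is shown to be $(K + 60\delta)$-aligned, and Proposition \ref{prop:weakConcat1} applied to this \emph{localized} triple yields $d_S(p_1, q_1) \le \frac{1}{100}\bigl(d_S(g, q_1) + d_S(q_1, P)\bigr)$. Since $d_S(q_1, P)$ inherits the decaying structure, each $d_S(q_{l+1}, q_{l+2})$ now appears with coefficient roughly $\frac{1}{100 \cdot 30^l}$, comfortably within the target. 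Only then is the IH applied again to $[p_1, h]_S$ with $\gamma_2, \ldots, \gamma_n$ to produce $p_2, \ldots, p_n$. The preliminary IH application that constructs the nearby reference point $P$ is exactly what your proposal skips, and without it the coefficient verification you defer to the end will not go through.
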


For the base case $n=1$, we proved an even stronger statement in Proposition \ref{prop:weakConcat1}, that \begin{equation}\label{eqn:n1CaseWeak}
d_{S}(p_{1}, q_{1}) \le \frac{1}{100} d_{S}(g, q_{1}) + \frac{1}{100}d_{S}(q_{1}, h).
\end{equation}
Let us now study the induction step. Assuming the induction hypothesis, let $g, h \in \Mod(\Sigma)$ and $\varphi$-orbit sequences $\gamma_{i}$'s be as in the claim. Then $\pi_{\gamma_{1}}(g)$ is assumed to be contained in $(K+60\delta)$-long initial subsegment of $\Proj (\gamma_{1})$. Since $\Proj \gamma_{1}$ is longer than $2(K+60\delta)$, it follows that $\big(\Proj (\gamma_{1}), gx_{0}\big)$ is not $(K+60\delta)$-aligned. Because $\big(\Proj (\gamma_{1}), \Proj (\gamma_{2})\big)$ is $K$-aligned, Fact \ref{fact:Behr} implies that $\big( gx_{0}, \, \Proj(\gamma_{2})\big)$ is $(K+60\delta)$-aligned. Furthermore, we are assuming that \[
\Big( \Proj(\gamma_{2}), \, \ldots, \, \Proj(\gamma_{n}), \, hx_{0} \Big)
\]
is $K$-aligned. Hence, the induction hypothesis applies to the mapping classes $g, h$ and $\varphi$-orbit sequences $\gamma_{2}, \ldots, \gamma_{n}$. As a result, we obtain a point $P \in [g, h]_{S}$ such that $\pi_{\gamma_{2}}(P)$ falls into the middle one-third of $\Proj \gamma_{2}$ and such that \[
\begin{aligned}
d_{S}(P, q_{2}) &\le \frac{1}{60} d_{S}(g, q_{2}) + \sum_{l=1}^{n-1} \frac{1}{30^{l}} d_{S}(q_{l+1}, q_{l + 2})\le \frac{1}{60} d_{S}(g, q_{1}) + \frac{1}{60} d_{S}(q_{1}, q_{2}) + \sum_{l=1}^{n-1} \frac{1}{30^{l}} d_{S}(q_{l+1}, q_{l + 2}).
\end{aligned}
\]
This implies that \[\begin{aligned}
d_{S}(q_{1}, P) &\le d_{S}(q_{1}, q_{2}) + d_{S}(q_{2}, P)\le \frac{1}{60} d_{S}(g,q_{1}) +  \frac{61}{60} d_{S}(q_{1}, q_{2}) + \sum_{l=1}^{n-1} \frac{1}{30^{l}} d_{S}(q_{l+1}, q_{l + 2}).
 \end{aligned}
\]
Furthermore, since $\big(\Proj(\gamma_{1}), \Proj(\gamma_{2})\big)$ is $K$-aligned and $\big(\Proj(\gamma_{2}), Px_{0}\big)$ is not $(K+60\delta)$-aligned (because $K+60\delta < D_{\mathcal{C}}\LMap/3$), Fact \ref{fact:Behr} implies that $\big(gx_{0}, \Proj(\gamma_{1}), Px_{0}\big)$ is $(K+60\delta)$-aligned.

Now, by Proposition \ref{prop:weakConcat1}, we obtain a point $p_{1} \in [g, P]_{S} \subseteq [g, h]_{S}$ such that $\pi_{\gamma_{1}}(p_{1})$ falls into the middle one-third of $\Proj \gamma_{1}$, and such that \[\begin{aligned}
d_{S}(p_{1}, q_{1})&\le\frac{1}{100} \big( d_{S}(g, q_{1}) + d_{S}(q_{1}, P) \big) \le
\frac{61}{6000} \cdot d_{S}(g, q_{1}) + \frac{61}{6000} d_{S}(q_{1}, q_{2}) +  \sum_{l=1}^{n-1} \frac{1}{100 \cdot 30^{l}} d_{S}(q_{l+1},q_{l + 2}).\end{aligned}
\]
This implies that $p_{1}$ satisfies Inequality \ref{eqn:nGenCaseWeak} for $i=1$. Moreover, we have \[
\begin{aligned}
d_{S}(q_{2}, p_{1}) &\le d_{S}(q_{2}, q_{1}) + d_{S}(q_{1}, p_{1})\le  \frac{61}{6000} \cdot d_{S}(g, q_{1}) + \frac{6061}{6000} d_{S}(q_{1}, q_{2}) +  \sum_{l=1}^{n-1} \frac{1}{100 \cdot 30^{l}} d_{S}(q_{l+1}, q_{l + 2}). \end{aligned}
\]
Here, since $\pi_{\gamma_{1}}(p_{1})$ lies in the middle one-third of $\Proj( \gamma_{1})$, it cannot enter the $(K+60\delta)$-terminal subsegment of $\gamma_{1}$ (because $K+60\delta < D_{\mathcal{C}}\LMap/3$). Hence, $\big(\Proj (\gamma_{1}) ,p_{1}x_{0}\big)$ is not $(K+60\delta)$-aligned. Since $\big(\Proj( \gamma_{1}), \Proj (\gamma_{2}) \big)$ is $K$-aligned, Fact \ref{fact:Behr} implies that $\big( p_{1}x_{0}, \Proj (\gamma_{2})\big)$ is $(K+60\delta)$-aligned. We can now apply the induction hypothesis to the geodesic $[p_{1}, h]_{S}$ in relation to $\gamma_{2}, \ldots, \gamma_{n}$. We obtain $p_{2}, \ldots, p_{n}$ on $[p_{1}, h]_{S}$, in order from left to right, such that $\pi_{\gamma_{i}}(p_{i})$ falls into the middle one-third of $\gamma_{i}$ for $i=2, \ldots, n$, and such that\[
\begin{aligned}
d_{S}(p_{i}, q_{i}) &\le \frac{1}{2 \cdot 30^{i-1}} d_{S}(p_{1}, q_{2}) + \sum_{l=1}^{i-2} \frac{1}{30^{l}} \left( 1 - \frac{1}{30^{2(i-l - 1)}}\right) d_{S}(q_{i - l}, q_{i-l+1}) \\
&+ \sum_{l=1}^{n-i+1} \frac{1}{30^{l}} \left( 1 - \frac{1}{30^{2(i-1)}}\right)d_{S}(q_{i+l-1}, q_{i+l}) \\
&\le \frac{61}{2 \cdot 30^{i-1} \cdot 6000}d_{S}(g, q_{1}) +  \frac{6061}{2 \cdot 30^{i-1} \cdot 6000}d_{S}(q_{1}, q_{2}) \\
&+ \sum_{l=1}^{i-2} \left( \frac{1}{2 \cdot 30^{i-1} \cdot 100 \cdot 30^{i-l-1}} + \frac{1}{30^{l}} \left(1 - \frac{1}{30^{2(i-l-1)}} \right)\right) d_{S}(q_{i-l}, q_{i-l+1}) \\
&+ \sum_{l=1}^{n-i+1} \left( \frac{1}{ 2 \cdot 30^{i-1} \cdot 100 \cdot 30^{i+l-2}} + \frac{1}{30^{l}} \left(1 - \frac{1}{30^{2(i-1)}}\right)\right) d_{S}(q_{i+l-1}, q_{i+l}) \\
&\le \frac{1}{2 \cdot 30^{i}} d_{S}(g, q_{1}) + 0.6 \cdot \frac{1}{ 30^{i-1}}  d_{S}(q_{1}, q_{2}) \\
&+ \sum_{l=1}^{i-2}  \frac{1}{30^{l}} \left(1 - \frac{1}{30^{2(i-l)}} \right) d_{S}(q_{i-l}, q_{i-l+1}) + \sum_{l=1}^{n-i+1}  \frac{1}{30^{l}} \left(1 - \frac{1}{30^{2i}} \right) d_{S}(q_{i+l-1}, q_{i+l}) \\
\end{aligned}
\]
as desired. Since $p_{1}$ appears earlier than $p_{2}$ on $[g, h]_{S}$, the conclusion is now established.
\end{proof}

\begin{cor}\label{cor:weakConcat}
For each $K>0$, the following holds for $\LMap := \frac{3 \cdot 10^{6}}{D_{\mathcal{C}}}(K_{map}^{6} + K)$. Let $n>0$, let $g, h \in \Mod(\Sigma)$ and let $\gamma_{i}$ be $\varphi$-orbit sequences of even length greater than $L$ such that \[
\Big( gx_{0}, \Proj (\gamma_{1}), \ldots, \Proj(\gamma_{n}), hx_{0} \Big)
\]
is $K$-aligned. Then we have \[
\sum_{i=1}^{n} d_{S}\big([g, h]_{S}, \textrm{midpoint of $\gamma_{i}$}\big) \le 0.5 d_{S}(g, h).
\]
\end{cor}

\begin{proof}
Let $q_{0} := g$, let $q_{n+1} := h$ and let $q_{i} := (\textrm{midpoint of $\gamma_{i}$})$ for $i=1, \ldots, n$. Let $p_{1}, \ldots, p_{n}$ be the points on $[g, h]_{S}$ obtained from Proposition \ref{prop:weakConcat2}. Since $p_{i}$'s are in order from left to right, we have \[
\begin{aligned}
d_{S}(g, h) &= d_{S}(q_{0}, p_{1}) + \sum_{i=1}^{n-1} d_{S}(p_{i}, p_{i+1}) + d_{S}(p_{n}, q_{n+1}) \ge \sum_{i=1}^{n+1} d_{S}(q_{i-1}, q_{i}) - 2 \sum_{i=1}^{n} d_{S}(p_{i}, q_{i}).
\end{aligned}
\]
Here, Proposition \ref{prop:weakConcat2} tells us that \[
\sum_{i=1}^{n} d_{S}(p_{i}, q_{i}) \le \sum_{i=1}^{n+1} 2 \left( \frac{1}{30} + \frac{1}{30^{2}} + \ldots\right) \cdot d_{S}(q_{i-1}, q_{i}) \le \frac{1}{14} \sum_{i=1}^{n+1} d_{S}(q_{i-1}, q_{i}).
\]
This implies that \[
d_{S}(g, h) \ge 0.5 \sum_{i=1}^{n+1} d_{S}(q_{i-1}, q_{i}) \ge 2 \sum_{i=1}^{n} d_{S}(p_{i}, q_{i}). \qedhere
\]
\end{proof}

We make a remark before formulating another consequence of Proposition \ref{prop:weakConcat2}. In $\delta$-hyperbolic spaces such as $\mathcal{C}(\Sigma)$, a geodesic cannot go forth and then back along another geodesic. That means, if $\gamma$ and $\kappa$ are $d_{\mathcal{C}}$-geodesics and if $x, y, z$ are ordered along $\gamma$ (that means, $y$ appears earlier than one of $\{x, z\}$ and later than the other one), then $\pi_{\kappa}(x)$, $\pi_{\kappa}(y)$ and $\pi_{\kappa}(z)$ are also ordered along $\kappa$ up to a bounded error. To see this, suppose to the contrary that $\pi_{\kappa}(x)$ and $\pi_{\kappa}(z)$ both appear much earlier than $\pi_{\kappa}(y)$. Then $[x, y]_{\gamma}$ and $[y, z]_{\gamma}$ should both pass through the neighborhood of $[\pi_{\kappa}(x), \pi_{\kappa}(y)]_{\kappa} \cap [\pi_{\kappa}(z), \pi_{\kappa}(y)]_{\kappa}$, which contains a point far away from $y$. This contradicts the fact that bounded neighborhoods of $[x, y]_{\gamma}$ and $[y, z]_{\gamma}$ can intersect only near $y$.

A hidden principle behind this is as follows: the geodesic $\gamma$ can make nontrivial progress along another geodesic $\kappa$ (in terms of $\pi_{\kappa}(\gamma)$) only nearby $\kappa$. This holds in general metric spaces given that $\kappa$ is strongly contracting (\cite[Lemma 4.1]{sisto2018contracting})

This is no longer true when $\kappa$ is only weakly contracting: $\gamma$ might make nontrivial backtracking along $\kappa$ away from $\kappa$. However, such backtracking costs substantial length of $\gamma$: when $\gamma$ is distant from $\kappa$, $\diam(\pi_{\kappa}(\gamma))$ is coarsely bounded by $\log (\textrm{length of $\gamma$})$. Hence, in order for $\gamma$ to make a complete round-trip along $\kappa$, the length of $\gamma$ should be at least exponential in the length of $\kappa$.

When $\kappa$ is replaced with an aligned sequence of weakly contracting geodesics, we need a separate argument. We now present the statement (see Figure \ref{fig:weakConcatSqrt}):

\begin{figure}
\begin{tikzpicture}
\begin{scope}[xscale=-1]
\foreach \i in {0, ..., 3}{
\draw[line width=2.2] (2.5*\i, 0) arc (120:60:1.5);
}
\foreach \i in {1, ..., 3}{
\draw[dashed, thick, <->] (2.5*\i-1.1, 0.1) arc (120:60:1.2);
}
\draw[very thick] (-1, 4) -- (-0.4, 2.5) -- (0.5, 1.9) -- (1.15, 2.5) -- (2, 2.4) -- (2.75, 2.65) -- (3.2, 1.9) -- (4.2, 1.5) -- (4.6, 2.3) -- (5.2, 2.1) -- (5.8, 1.7) -- (6.2, 1) -- (7, 1.35) -- (7.9, 1.3) -- (8.2, 0.85) -- (9.3, 0.7) -- (9.8, 0.1) -- (9.4, -0.8) -- (8.3, -1.3) -- (7.4, -1.05) -- (6.2, -1.2) -- (5.5, -1.65) -- (4.8, -1.75) -- (4.4, -2.2) -- (3.8, -2.1) -- (3.1, -2.4) -- (2.3, -3) -- (1.6, -2.6) -- (0.8, -3.1) -- (-0.3, -3.2) -- (-1, -4);

\draw (0.625, -0.1) node {$\gamma_{4}$};
\draw (0.625+2.5, -0.1) node {$\gamma_{3}$};
\draw (0.625+5, -0.1) node {$\gamma_{2}$};
\draw (0.625+7.5, -0.1) node {$\gamma_{1}$};

\fill (9.8, 0.1) circle (0.07);
\draw (10.07, 0.1) node {$g$};
\fill (-1, 4) circle (0.07);
\draw (-1, 4.27) node {$h_{1}$};
\fill (-1, -4) circle (0.07);
\draw (-1, -4.27) node {$h_{2}$};

\draw[dashed, <->] (0.5, 1.8) -- (0.625, 0.25);
\draw (0.27, 1) node {$D_{4}$};
\draw[dashed, <->] (4.2, 1.45) -- (3.125, 0.25);
\draw (3.28, 0.95) node {$D_{3}$};
\draw[dashed, <->] (6.2, 0.95) -- (5.625, 0.25);
\draw (5.52, 0.62) node {$D_{2}$};
\draw[dashed, <->] (8.2, 0.8) -- (8.125, 0.25);
\draw (7.8, 0.53) node {$D_{1}$};

\draw[dashed, <->] (1.6, -2.55) -- (0.638, -0.3);
\draw (0.8, -1.45) node {$D_{4}'$};
\draw[dashed, <->] (3.8, -2.05) -- (3.135, -0.3);
\draw (3.18, -1.2) node {$D_{3}'$};
\draw[dashed, <->]  (6.2, -1.15) -- (5.71, -0.3);
\draw (5.65, -0.8) node {$D_{2}'$};
\draw[dashed, <->] (8.3, -1.25) -- (8.135, -0.3);
\draw (7.9, -0.75) node {$D_{1}$};

\end{scope}

\end{tikzpicture}
\caption{Schematics for Corollary \ref{cor:weakConcatSqrt} in the case $n=4$. The alignment forces that $\gamma_{i}$ moves away from $g$ (in $d_{S}$) at least linearly fast. Since $[h_{1}, h_{2}]_{S}$ is a geodesic, $D_{i} + D_{i}'$ cannot be small and grows linearly as well.}
\label{fig:weakConcatSqrt}
\end{figure}

\begin{cor}\label{cor:weakConcatSqrt}
For each $K>0$, the following holds for $L := \frac{3 \cdot 10^{6}}{D_{\mathcal{C}}}(K_{map}^{6} + K)$. Let $[h_{1}, h_{2}]_{S}$ be a $d_{S}$-geodesic, let $g \in [h_{1}, h_{2}]_{S}$, and let $\gamma_{i}$ be $\varphi$-orbit sequences of even length $L' > L$ such that \[
\Big( gx_{0}, \Proj (\gamma_{1}) \ldots, \Proj (\gamma_{n}), h_{t} x_{0} \Big)
\]
is $K$-aligned for $t=1, 2$. Then we have $d_{S}(h_{1}, h_{2}) \ge K_{map}(n- L')^{2}$.
\end{cor}

\begin{proof}
Note that $\Proj \gamma_{i}$ is longer than $D_{\mathcal{C}} L \ge 1000(K + K_{map}) \ge 2K + 140\delta$. By Lemma \ref{lem:GromProdFellow}, there exist disjoint subsegments $\eta_{1}, \ldots, \eta_{n}$ of $[gx_{0}, h_{1} x_{0}]_{\mathcal{C}}$, in order from left to right, that are $(K+80\delta)$-fellow traveling with $\Proj \gamma_{1}, \ldots, \Proj \gamma_{n}$, respectively. For each $i=2, \ldots, n$ we have \[\begin{aligned}
d_{\mathcal{C}}(gx_{0}, \Proj \gamma_{i}) &\ge \sum_{j=1}^{i-1} \diam_{\mathcal{C}} (\eta_{j}) - (K + 80\delta)\\
& \ge (i-1) \big(3 \cdot 10^{6}(K_{map}^{5} + K)  - (2K + 160\delta) \big) - (K+80\delta) \ge 2K_{map}^{2}i.
\end{aligned}
\]
This implies that $d_{S}(g, \gamma_{i}) \ge \frac{1}{C_{0}} d_{\mathcal{C}}(gx_{0}, \Proj \gamma_{i}) \ge 2K_{map}i$ for $i \ge 2$.

Let $p_{i} \in [h_{1}, g]_{S}$ be the point that realizes the distance $D_{i}$ between $[h_{1}, g]_{S}$ and $\gamma_{i}$. Similarly, let $q_{i} \in [g, h_{2}]_{S}$ be the one realizing the distance $D_{i}'$ between $[g, h_{2}]_{S}$ and $\gamma_{i}$. Then we have \[
\begin{aligned}
4K_{map}i - D_{i} - D_{i}' &\le \big( d_{S}(g, \gamma_{i}) - d_{S}(p_{i}, \gamma_{i}) \big) + \big( d_{S}(g, \gamma_{i}) - d_{S}(q_{i}, \gamma_{i}) \big) \\
&\le d_{S}(p_{i}, g) + d_{S}(g, q_{i}) = d_{S}(p_{i}, q_{i}) \\
&\le d_{S}(p_{i}, \gamma_{i}) + \diam_{S}(\gamma_{i}) + d_{S}(\gamma_{i}, q_{i}) \le D_{i} + D_{i}' + D_{S} L'.
\end{aligned}
\]
In summary, we have $D_{i} + D_{i}' \ge 2K_{map} (i - L')$ for $i \ge 2$. Now Corollary \ref{cor:weakConcat} implies that \[
\begin{aligned}
K_{map}(n - L')^{2} \le \sum_{i=L'}^{n}  (D_{i} + D_{i}') \le \frac{1}{2}d_{S}(h_{1}, g) + \frac{1}{2}d_{S}(g, h_{2}) \le d_{S}(h_{1}, h_{2}).\qedhere
\end{aligned}
\]

\end{proof}

\section{Genericity of pseudo-Anosovs}\label{section:generic}

We begin with a classical lemma. 
\begin{lem}\label{lem:fekete}
The following limit exists: \[
\lim_{n \rightarrow +\infty} \frac{\log \#B_{S}(n)}{n} = \inf_{n \rightarrow +\infty} \frac{\log \#B_{S'}(n)}{n}=: \lambda_{S}>0.
\]
\end{lem}

\begin{proof}
This is due to Fekete's Lemma and the exponential growth of $\Mod(\Sigma)$.
\end{proof}

\begin{cor}\label{cor:fekete}
The ratio $\#B_{S}(0.99n) / \# B_{S}(n)$ decays exponentially as $n$ tends to infinity.
\end{cor}

We can now state and prove a refined version of Theorem \ref{thm:main}.

\begin{thm}\label{thm:mainTr}
Let $S$ be a finite generating set of $\Mod(\Sigma)$. Then there exists $K>0$ such that \[
\frac{\# \big\{ g \in B_{S}(R) : \textrm{$\tau_{\mathcal{C}}(g) \le 10$ or $\tau_{S}(g) \le 0.33R$}\big\}}{\#B_{S}(R)} \le \frac{K}{\sqrt{R}}
\]
holds for all $R>0$.
\end{thm}

\begin{remark}
The constants $10$ and $0.33$ in the thresholds in Theorem \ref{thm:mainTr} are not optimal. In general, for an arbitarry $\epsilon>0$, generic mapping classes in the ball $B_{S}(R)$ of radius $R$ have $d_{\mathcal{C}}$-translation length  larger than $1/\epsilon$ and $d_{S}$-translation length larger than $(1-\epsilon)R$. This can be proven by effectivising Proposition \ref{prop:weakConcat1} and Proposition \ref{prop:weakConcat2}. Since the idea is essentially the same, we choose to focus on a specific choice of thresholds as in Theorem \ref{thm:mainTr}.
\end{remark}

Let us first define \[\begin{aligned}
\mathcal{NPA}&:= \big\{ g \in \Mod(\Sigma) : \textrm{$\tau_{\mathcal{C}}(g) \le 10$ or $\tau_{S}(g) \le 0.334 \|g\|_{S}$}\big\}, \\
L_{map} &:= 10^{8} \cdot K_{map}^{6}/D_{\mathcal{C}}.
\end{aligned}
\] 
Then $L_{map}/2$ is an even integer larger than $L$ as in Proposition \ref{prop:weakConcat1} and \ref{prop:weakConcat2} for the choice $K = 10K_{map}$. We now define\[
\mathcal{A}_{thick} := \left\{ g \in \Mod(\Sigma) : \begin{array}{c} \textrm{$\exists$ $\varphi$-orbit sequence $\gamma$ of length $L_{map}$ such that}\\ 
\textrm{$0.25\|g\|_{S}\le d_{S}(id, \gamma) \le 0.3\|g\|_{S}$ and $(x_{0}, \Proj \gamma, gx_{0})$ is $K_{map}$-aligned} \end{array} \right\}.
\]

Our first lemma is:

\begin{lem}\label{lem:mainGenLem}
There exists a constant $\lambda > 1$ such that  \[
\frac{\# \Big(\mathcal{NPA} \cap \mathcal{A}_{thick}\cap \big(B_{S}(n) \setminus B_{S}(0.99n) \big)\Big)}{\#B_{S}(n)} \le \lambda^{-n}
\]
holds for all large enough $n$.
\end{lem}

\begin{proof}
In the sequel, we assume that $n$ is large enough, e.g., \begin{equation}\label{eqn:nVeryLarge}
n > 10^{8} \cdot D_{S}L_{map}.
\end{equation} Let $g$ be an element of $\mathcal{A}_{thick} \cap \left(B_{S}(n) \setminus B_{S}(0.99n)\right)$. This means that there exists a $\varphi$-orbit sequence $\gamma$ of length $L_{map}$ such that $0.25n \le d_{S}(id, \gamma) \le 0.3n$ and $(x_{0}, \Proj \gamma, gx_{0})$ is $K_{map}$-aligned. We have \[\begin{aligned}
\diam_{S} (g \cup g\gamma) &\le d_{S}(g, g\gamma) + \diam_{S}(\gamma) \le 0.3n + D_{S}L_{map} \\
&\le 0.5 \cdot (0.99n-0.3n - 0.01n) & (\because \textrm{Inequality}\,\, \ref{eqn:nVeryLarge}) \\
&\le 0.5 \big( d_{S}(g, id) - d_{S}(id, \gamma) - \diam_{S}(\gamma) \big) &(\because \textrm{Inequality}\,\, \ref{eqn:nVeryLarge}) \\
&\le 0.5 \big( d_{S}(g, id) - \diam_{S}(id, \gamma) \big)  \le 0.5d_{S}(g, \gamma).
\end{aligned}
\]
Moreover, $d_{S}(g, \gamma) \ge 0.68n \ge K_{map}$ holds. By Proposition \ref{prop:duchin}, $\pi_{\gamma}(g)$ and $\pi_{\gamma}(g \gamma)$ are $K_{map}$-close to each other. In particular, $(\Proj \gamma, q)$ is $2K_{map}$-aligned for any $q \in \Proj g \gamma$.

For the same reason, since $\gamma$ is far from $g\gamma$ compared to the $d_{S}$-diameter of $\gamma$, we conclude that $\pi_{g\gamma'}(\gamma)$ has diameter at most $K_{map}$ for any subsegment $\gamma'$ of $\gamma$ ($\ast \ast$).

Let $h$ be the beginning point of $\gamma$ and let  $\gamma_{1}$ and $\gamma_{2}$ be the two half subsegments of $\gamma$, i.e., let \[
\gamma_{1} := \big( h, \,h\varphi, \,\ldots, \,h \varphi^{L_{map}/2} \big), \quad \gamma_{2} := \big ( h \varphi^{L_{map}/2}, \,h \varphi^{L_{map}/2 + 1},\, \ldots,\, h \varphi^{L_{map}} \big).
\]
 Then $(\Proj g\gamma_{1}, \Proj g\gamma_{2})$ is $0$-aligned, so at least one of the following holds by Fact \ref{fact:Behr}.

\begin{enumerate}
\item $(hx_{0}, \Proj g\gamma_{2})$ is $60\delta$-aligned. Then by ($\ast \ast$),  $(p, \Proj g\gamma_{2})$ is $2K_{map}$-aligned for $p \in \Proj \gamma$. Moreover, recall that $(\Proj \gamma, q)$ is $2K_{map}$-aligned for each $q \in \Proj g \gamma$. This means that \[
\pi_{\gamma}(q) \subseteq N_{2K_{map}} (\textrm{ending point of $\Proj \gamma$}) \cap \gamma \subseteq  \Proj \gamma_{2}.
\]
Hence,$\pi_{\gamma}(q) = \pi_{\gamma_{2}}(q)$ holds and  $\left(\Proj \gamma_{2}, q\right)$ is also $2K_{map}$-aligned. All in all, $(\Proj \gamma_{2}, g\Proj \gamma_{2})$ is $2K_{map}$-aligned. As a result, for each $N>0$, the sequence\[
\big(\Proj \gamma_{2}, \, g \Proj \gamma_{2},\, \ldots,\, g^{N-1} \Proj \gamma_{2}, \,g^{N}\Proj \gamma_{2}\big)
\]
is $2K_{map}$-aligned. Note that $\diam_{\mathcal{C}}( \Proj \gamma_{2}) =\frac{D_{\mathcal{C}} L_{map}}{2} \ge 5K_{map} + 160\delta$. Lemma \ref{lem:GromProdFellow} implies \[
d_{\mathcal{C}}\big( \Proj \gamma_{2}, g^{N} \Proj \gamma_{2} \big) \ge (N-2) \big( \diam_{\mathcal{C}}(\Proj \gamma_{2}) - 4K_{map} - 160\delta\big) \ge (N-2) K_{map}.
\]
This implies that $\tau_{\mathcal{C}}(g) \ge K_{map} \ge 10$.

Now let $q_{i} := g^{i} h \varphi^{3L_{map}/4}$ be the midpoint of $g^{i} \gamma_{2}$ for $i=1, \ldots, n$ and let $p_{i}$ be the points on $[q_{0},  q_{N}]_{S}$ as described in Proposition \ref{prop:weakConcat2}, in reference to the aligned sequence $(q_{0}, g \Proj \gamma_{2}, \ldots, g^{N-1} \Proj \gamma_{2}, q_{N})$. Then we have \[\begin{aligned}
d_{S}(q_{0}, q_{N}) &\ge \sum_{i=1}^{N} d_{S}(q_{i-1}, q_{i}) - 2 \sum_{i=1}^{N-1} d_{S}(p_{i}, q_{i})\ge \left(1 - 4 \cdot \left( \frac{1}{30} + \frac{1}{30^{2}} + \ldots\right)\right)\cdot  \sum_{i=1}^{N} d_{S}(q_{i-1}, q_{i})  \\
&\ge 0.86 \cdot \sum_{i=1}^{N} \big( d_{S}(g^{i-1}, g^{i}) - d_{S}(g^{i-1}, q_{i-1}) - d_{S}(g^{i}, q_{i})\big) \\
&\ge 0.86 (N-1)\big( \|g\|_{S} - 2 ( d_{S}(id, \gamma) + \|\gamma\|_{S}) \big)\\
& \ge 0.86 (N-1) (0.99n - 0.6n - 0.001n) \ge 0.334n(N-1). \quad (\because \textrm{Inequality}\,\, \ref{eqn:nVeryLarge})
\end{aligned}
\]
This implies that $\tau_{S}(g) \ge 0.334n \ge 0.334 \|g\|_{S}$. In conclusion, $g$ does not belong to $\mathcal{NPA}$.

\item $(\Proj g\gamma_{1}, hx_{0})$ is $60\delta$-aligned. Recall our assumption that $(x_{0}, \gamma, gx_{0})$ is $K_{map}$-aligned. Since $\Proj \gamma_{1}$ is longer than $K_{map}$, we have $\pi_{\gamma}(x_{0})\subseteq \Proj\gamma_{1}$, which implies that $\pi_{\gamma_{1}}(x_{0}) = \pi_{\gamma}(x_{0})$ and that $(x_{0},\Proj \gamma_{1})$ is $K_{map}$-aligned. Similarly, $(\Proj\gamma_{2}, gx_{0})$ is $K_{map}$-aligned and \[
\big(x_{0}, \,\Proj\gamma_{1}, \,\Proj\gamma_{2},\, gx_{0}\big)
\]
is $K_{map}$-aligned as well. By using Fact \ref{fact:Behr} and the fact that $(\Proj \gamma_{1}, g^{-1} hx_{0}) = g^{-1}(\Proj g \gamma_{1}, hx_{0})$ is $60\delta$-aligned,  we conclude that the sequences\[
\big(x_{0}, \,\Proj \gamma_{1}, \,gx_{0}\big), \quad \big(x_{0},\,\Proj \gamma_{1},\, g^{-1}hx_{0} \big)
\]
are each $2K_{map}$-aligned. 

Let $Q:= h \varphi^{L_{map}/4}$ be the midpoint of $\gamma_{1}$. Proposition \ref{prop:weakConcat1} guarantees an element $u \in [id, g]_{S}$ such that  \[
\begin{aligned}
d_{S}(u, \gamma_{1}) &\le \frac{1}{100}  \big( d_{S}(id, Q) + d_{S}(g, Q) \big) \le  \frac{1}{100}  \big( d_{S}(id, Q) + d_{S}(g, id) + d_{S}(id, Q) \big) \\
&=\frac{1}{100} \big( 2d_{S}(id, Q) + \|g\|_{S} \big)\le \frac{1}{100} \Big( 2\big(d_{S}(id, \gamma) + \diam_{S}(\gamma)\big) + \|g\|_{S} \Big) \\
&\le \frac{1}{100} \big( 2\cdot 0.3n + 2 D_{S} L_{map} + n \big) \le 0.017n.\quad (\because \textrm{Inequality}\,\, \ref{eqn:nVeryLarge}) 
\end{aligned}
\]
This implies that \[
\begin{aligned}
d_{S}(g, h) &\le d_{S}(g, u) + d_{S}(u, \gamma) + \diam_{S}(\gamma) \\
&\le d_{S}(id, g) - d_{S}(id, u) + d_{S}(u, \gamma) + D_{S} L_{map}  & (\because u \in [id, g]_{S})\\
&\le \|g\|_{S} - d_{S}(id, \gamma) + 2d_{S}(u, \gamma) + D_{S}L_{map} \\
&\le (1-0.25) \|g\|_{S} + 0.034n + 0.001n \le 0.785n.& (\because \textrm{Inequality}\,\, \ref{eqn:nVeryLarge}) 
\end{aligned}
\]

Proposition \ref{prop:weakConcat1} also guarantees an element $v \in [id, g^{-1}h]_{S}$ such that \[\begin{aligned}
d_{S}(v, \gamma) &\le \frac{1}{100} \big(d_{S}(id, Q)+ d_{S}( Q, g^{-1}h)  \big)\le \frac{1}{100} \big( 2d_{S}(id, Q) +d_{S}(g, h)\big)\\
&\le \frac{1}{100} \big( 2d_{S}(id, \gamma) + 2\diam_{S}(\gamma) + d_{S}(g, h) \big) \\
& \le \frac{1}{100} ( 2 \cdot 0.3n +D_{S}L_{map}+ 0.785n) \le 0.014n. & (\because \textrm{Inequality}\,\, \ref{eqn:nVeryLarge}) 
\end{aligned}
\]
We deduce \[\begin{aligned}
d_{S}(h, gh)&\le  d_{S}(h, gv) + d_{S}(gv, g\gamma) + \diam_{S}(g \gamma) \\
&= d_{S}(h, g) - d_{S}(g, gv) + d_{S}(v, \gamma) + \diam_{S}(\gamma) & (\because \,gv \in [g, h]_{S})\\
&\le d_{S}(h, g)  - d_{S}(g, g\gamma) + 2d_{S}(v, \gamma) + \diam_{S}(\gamma) \\
&\le 0.785n - 0.25 \cdot 0.99n + 0.028n + D_{S} L_{map}\le 0.57n. & (\because \textrm{Inequality}\,\, \ref{eqn:nVeryLarge}) 
\end{aligned}
\]
Hence, if we denote $h^{-1} gh$ by $g'$, we can express $g$ as a product of $h$, $g'$ and $h^{-1}$, where $\|h\|_{S} \le 0.3n + \diam_{S}(\gamma) \le 0.31n$ and $\|g'\|_{S} \le 0.57n$.
\end{enumerate}

Summing up, we have \[
\mathcal{NPA} \cap \mathcal{A}_{thick} \cap \big(B_{S}(n) \setminus B_{S}(0.99n) \big) \subseteq \big\{ h^{-1} g' h : h \in B_{S}(0.31n), g' \in B_{S}(0.57n)\big\}.
\]
Lemma \ref{lem:fekete} tells us that, for large enough $n$, the cardinality of the latter set is at most \[
(e^{1.01 \lambda_{S}})^{0.31n + 0.57n} \le e^{n \lambda_{S}} \cdot e^{-0.1\lambda_{S}n} \le \big( \#B_{S}(n) \big) \cdot \big(e^{0.1\lambda_{S}}\big)^{-n}. \qedhere
\]
\end{proof}

With Corollary \ref{cor:fekete} and Lemma \ref{lem:mainGenLem} in hand, it remains to prove that $\mathcal{A}_{thick}$ is generic in the mapping class group. For this, we construct a map \[
F_{n} : \operatorname{Dom}\, F_{n} := \Big( B_{S}(n) \setminus \big(B_{S}(0.99n) \cup  \mathcal{A}_{thick} \big)\Big) \times \{\lceil0.274n\rceil, \lceil0.274n\rceil + 1, \ldots, \lfloor0.275n \rfloor\}\rightarrow \Mod(\Sigma).
\]
The inputs are $g \in \Big( B_{S}(n) \setminus \big(B_{S}(0.99n) \cup  \mathcal{A}_{thick} \big)\Big)$ and $i \in \{\lceil0.274n\rceil, \lceil0.274n\rceil + 1, \ldots, \lfloor0.275n\rfloor\}$. We fix a $d_{S}$-geodesic $a_{1} \cdots a_{\|g\|_{S}}$ representing $g$ and define \[\begin{aligned}
w= w(g, i) &:= a_{1} a_{2} \cdots a_{i},\\
l = l(g, i) &:= a_{i+1} a_{i+2} \cdots a_{i+K_{map}L_{map} + 2}, \\
v=v(g, i) &:= a_{i+ K_{map}L_{map}+3} \cdots a_{\|g\|_{S}-1} a_{\|g\|_{S}}.
\end{aligned}
\]

\begin{figure}
\begin{tikzpicture}
\def\c{0.95}
\def\d{0.88}

\begin{scope}
\draw[thick] (-4*\c, 0) -- (4*\c, 0); 
\draw[thick] (-1.6*\c, 0.24*\c) -- (-1.6*\c, -0.24*\c);
\draw[thick] (-0.2*\c, 0.24*\c) -- (-0.2*\c, -0.24*\c);
\draw[thick] (-4*\c, 0.24*\c) -- (-4*\c, -0.24*\c);
\draw[thick] (4*\c, 0.24*\c) -- (4*\c, -0.24*\c);
\draw (-3.5*\c, 0.12*\c) -- (-3.5*\c, -0.12*\c);
\draw (-3*\c, 0.12*\c) -- (-3*\c, -0.12*\c);

\draw (3.5*\c, 0.12*\c) -- (3.5*\c, -0.12*\c);
\draw (3*\c, 0.12*\c) -- (3*\c, -0.12*\c);

\draw (-2.8*\c, -0.4*\c) node {$\underbrace{\quad\quad\quad\quad\quad\quad}$};
\draw (-2.8*\c, -0.75*\c) node {$w(g, i)$};
\draw (-0.9*\c, -0.4*\c) node {$\underbrace{\quad\quad\quad\,\,}$};
\draw (-0.9*\c, -0.75*\c) node {$l(g, i)$};
\draw (1.9*\c, -0.4*\c) node {$\underbrace{\qquad\qquad\qquad\qquad\qquad\,\,}$};
\draw (1.9*\c, -0.75*\c) node {$v(g, i)$};
\draw (-4.32*\c, 0) node {$id$};
\draw (4.3*\c, 0) node {$g$};
\draw (-4*\c, 0.45*\c) node {\footnotesize $0$};
\draw (4.*\c, 0.45*\c) node {\footnotesize $\|g\|_{S}$};
\draw (-3.5*\c, 0.35*\c) node {\footnotesize $1$};
\draw (3.3*\c, 0.3*\c) node {\footnotesize $\cdots$};
\draw (-3*\c, 0.3*\c) node {\footnotesize $2$};
\draw (-2.5*\c, 0.3*\c) node {\footnotesize $\cdots$};

\draw (-1.6*\c, 0.45*\c) node {\footnotesize $i$};
\draw (0.1*\c, 0.42*\c) node {\footnotesize $i + K_{map}L_{map} + 2$};

\end{scope}

\draw[thick, ->] (-1.5*\d, -1.3*\d) -- (-3.5*\d, -3*\d);
\draw (-2*\d, -2.4*\d) node {$\Proj$};

\begin{scope}[shift={(-7.5*\d, -6*\d)}]
\draw (0, 0.4*\d) -- (1*\d, 0.6*\d) -- (0.7*\d, -0.5*\d) -- (1.6*\d, -0.8*\d) -- (2.5*\d, -0.3*\d) -- (1.8*\d, 0.1*\d) -- (3.1*\d, 0*\d) -- (4*\d, 0.9*\d) -- (3.9*\d, -0.9*\d) -- (4.6*\d, 0*\d) -- (4.5*\d, 1.1*\d) -- (3.5*\d, 1.3*\d) -- (3.4*\d, 2.2*\d) -- (2.8*\d, 1.5*\d) -- (3.6*\d, 1*\d) -- (2.5*\d, 0.8*\d) -- (3.2*\d, 1.8*\d) -- (2.1*\d, 2*\d) -- (1.3*\d, 1.4*\d) -- (1.5*\d, 2.2*\d) -- (0.6*\d, 1.2*\d) -- (0, 1.2*\d);
\draw[opacity=0.18, line width=8]  (0, 0.4*\d) -- (1*\d, 0.5*\d) -- (0.7*\d, -0.5*\d) -- (1.6*\d, -0.8*\d) -- (2.5*\d, -0.3*\d) -- (1.8*\d, 0.1*\d) -- (3.1*\d, 0*\d) -- (4*\d, 0.9*\d);
\draw[opacity=0.18, line width=8] (4.5*\d, 1.1*\d) -- (3.5*\d, 1.3*\d) -- (3.4*\d, 2.2*\d) -- (2.8*\d, 1.5*\d) -- (3.6*\d, 1*\d) -- (2.5*\d, 0.8*\d) -- (3.2*\d, 1.8*\d) -- (2.1*\d, 2*\d) -- (1.3*\d, 1.4*\d) -- (1.5*\d, 2.2*\d) -- (0.6*\d, 1.2*\d) -- (0, 1.2*\d);

\draw (-0.3*\d, 0.4*\d) node {$x_{0}$};
\draw (-0.35*\d, 1.2*\d) node {$gx_{0}$};
\draw (2.42*\d, -1*\d) node {$w(g, i)$};
\draw (2.48*\d, 2.55*\d) node {$v(g, i)$};
\draw (4.8*\d, -0.75*\d) node {$l(g, i)$};

\end{scope}

\begin{scope}[shift={(-1.5*\d, -5.8*\d)}]
 \draw[opacity=0.18, line width=8]  (0, 0.4*\d) -- (1*\d, 0.5*\d) -- (0.7*\d, -0.5*\d) -- (1.6*\d, -0.8*\d) -- (2.5*\d, -0.3*\d) -- (1.8*\d, 0.1*\d) -- (3.1*\d, 0*\d) -- (4*\d, 0.9*\d);
  \draw (0, 0.4*\d) -- (1*\d, 0.5*\d) -- (0.7*\d, -0.5*\d) -- (1.6*\d, -0.8*\d) -- (2.5*\d, -0.3*\d) -- (1.8*\d, 0.1*\d) -- (3.1*\d, 0*\d) -- (4*\d, 0.9*\d);

\draw[opacity=0.18, line width=8, shift={(10*\d, 1.6*\d)}, rotate=176, ] (4.5*\d, 1.1*\d) -- (3.5*\d, 1.3*\d) -- (3.4*\d, 2.2*\d) -- (2.8*\d, 1.5*\d) -- (3.6*\d, 1*\d) -- (2.5*\d, 0.8*\d) -- (3.2*\d, 1.8*\d) -- (2.1*\d, 2*\d) -- (1.3*\d, 1.4*\d) -- (1.5*\d, 2.2*\d) -- (0.6*\d, 1.2*\d) -- (0, 1.2*\d);
\draw[shift={(10*\d, 1.6*\d)}, rotate=176, ] (4.5*\d, 1.1*\d) -- (3.5*\d, 1.3*\d) -- (3.4*\d, 2.2*\d) -- (2.8*\d, 1.5*\d) -- (3.6*\d, 1*\d) -- (2.5*\d, 0.8*\d) -- (3.2*\d, 1.8*\d) -- (2.1*\d, 2*\d) -- (1.3*\d, 1.4*\d) -- (1.5*\d, 2.2*\d) -- (0.6*\d, 1.2*\d) -- (0, 1.2*\d);

\draw[thick, dashed] (0, 0.4*\d) -- (9.9*\d, 0.4*\d);
\draw[opacity=0.8, line width=3] (4*\d, 0.9*\d) .. controls (4.1*\d, 0.6*\d) and (4.15*\d, 0.5*\d) .. (4.4*\d, 0.5*\d) -- (5.05*\d, 0.5*\d) .. controls (5.3*\d, 0.5*\d) and (5.35*\d, 0.55*\d) .. (5.45*\d, 0.84*\d);

\draw (0, 0) node {\small$x_{0}$};
\draw (3.65*\d, 1.2*\d) node {\small $w x_{0}$};
\draw (5.35*\d, 1.28*\d) node {\small $ws\varphi^{2L_{map}} t  x_{0}$};
\draw (4.8*\d, 0.05*\d) node {\small $s \varphi^{2L_{map}} t$};
\draw (10.1*\d, -0.05*\d) node {\small $F_{n}(g, i) x_{0}$};

\end{scope}

\end{tikzpicture}
\caption{Construction of $F_{n}(g, i)$. The upper layer is drawn on the Cayley graph of $\Mod(\Sigma)$, while the lower one is drawn on the curve complex $\mathcal{C}(\Sigma)$.
The words $w(g, i)$, $l(g, i)$ and $v(g, i)$ are defined by chopping up a $d_{S}$-geodesic between $id$ and $g$ at suitable loci. Here, $x_{0}, wx_{0}, wlx_{0}, wlvx_{0} = gx_{0}$ may not be aligned along a geodesic on $\mathcal{C}(\Sigma)$. This can be remedied by replacing $l$ with some suitable linkage word $s \varphi^{2L_{map}}t$, and we denote the resulting product by $F_{n}(g, i)$. Note that $ws[x_{0}, \varphi^{2L_{map}} x_{0}]_{\mathcal{C}}$ is uniformly close to $[x_{0}, F_{n}(g, i)x_{0}]_{\mathcal{C}}$.}
\label{fig:constructionFn}
\end{figure}
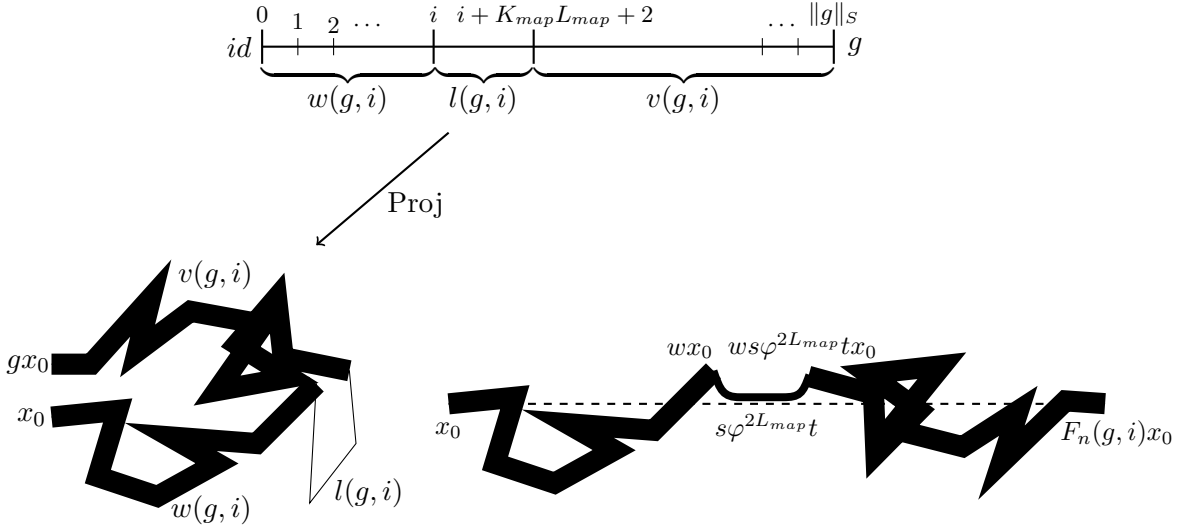

Note that $\|w\|_{S} = i$ and $\|v(g, i)\|_{S}=\|g\|_{S} - i - K_{map}L_{map} - 2$. By Fact \ref{fact:pseudoAnosov}(1) and Lemma \ref{lem:projApprox}, there exist $s, t \in \{id\} \cup S$ such that \[
\big(x_{0},\, ws\cdot \big[x_{0}, \varphi^{L_{map}} x_{0} \big]_{\mathcal{C}},  \,ws \varphi^{L_{map}} t v x_{0}\big)
\]
is $(E_{0} + 8\delta)$-aligned. We then define $F_{n}(g, i) := ws\varphi^{L_{map}} t v$. See Figure \ref{fig:constructionFn}. Since $\|\varphi^{L_{map}}\|_{S} \le K_{map} \cdot L_{map}$ and $\|s\|_{S}, \|t\|_{S} \le 1$, $F_{n}(g, i)$ lies in $B_{S}(n)$. The point of this construction is as follows: \begin{lem}\label{lem:almostInj}
There exists $C>0$, not depending on $n$, such that \[
F_{n}^{-1} \big(U\big) := \big\{(g, i) \in \operatorname{Dom} F_{n} : F_{n}(g, i) = U \big\}
\]
has cardinality at most $C\sqrt{n}$ for each $U \in B_{S}(n)$.
\end{lem}

\begin{proof}
It suffices to prove the lemma for large $n$, so we will assume that $n$ is large enough.
We fix a mapping class $U$. Let $(g_{1}, i_{1}), \ldots, (g_{N}, i_{N}) \in \operatorname{Dom} F_{n}$ be elements such that \[
F_{n}(g_{1}, i_{1}) = F_{n}(g_{2}, i_{2}) = \ldots = F_{n}(g_{N}, i_{N}) = U.
\]
For notational convenience, we denote $w(g_{k}, i_{k})$ by $w_{k}$, $l(g_{k}, i_{k})$ by $l_{k}$, $v(g_{k}, i_{k})$ by $v_{k}$, and the choices of $s, t \in \{id\} \cup S$ for $(g_{k}, i_{k})$ by $s_{k}$ and $t_{k}$. We denote the sequence $w_{k}s_{k} \big( id, \varphi, \ldots, \varphi^{L_{map}}\big)$ by $\gamma_{k}$. Observe that \[
\Big|d_{S}(id, \gamma_{k}) - \|w_{k}\|_{S}\Big| \le\|s_{k}\|_{S} + L_{map}\|\varphi\|_{S}, \quad d_{S}(id, \gamma_{k}) \in [0.27n, 0.28n].
\]

For each $k$,  $\pi_{\gamma_{k}}(x_{0})$ and $\pi_{\gamma_{k}}(Ux_{0})$ are each contained in the $(E_{0}+8\delta)$-neighborhood of the beginning and the ending point of $\Proj \gamma_{k}$, which are $10^{8}K_{map}^{5}$-far from each other.  By Fact \ref{fact:hyperbolic}(2), $[x_{0}, Ux_{0}]_{\mathcal{C}}$ contains a subsegment $[p_{k}, q_{k}]_{\mathcal{C}}$ that is $(E_{0} + 100\delta)$-fellow traveling with $\Proj \gamma_{k}$. Note that $[p_{k}, q_{k}]_{\mathcal{C}}$ is longer than \[
\diam_{\mathcal{C}}(\Proj \gamma_{k}) - 2(E_{0} + 100\delta) = 10^{7} K_{map}^{5} -  2(E_{0} + 100\delta) \ge 12(E_{0} + 100\delta).
\] Let $[p_{k}', q_{k}']_{\mathcal{C}}$ be a subsegment of $[p_{k}, q_{k}]_{\mathcal{C}}$ such that $d(p_{k}, p_{k'}), d(q_{k}, q_{k'}) = 6E_{0} + 600\delta$.

We now pick a maximal subset $I = \{\sigma(1),\sigma(2), \ldots\}$ of $\{1, \ldots, N\}$ such that $\big[p_{\sigma(1)}', q_{\sigma(1)}'\big]_{\mathcal{C}}$, $\ldots$, $\big[p_{\sigma(M)}', q_{\sigma(M)}'\big]_{\mathcal{C}}$ are disjoint. Let us denote the cardinality of $I$ by $M$, i.e., $I = \{\sigma(1), \ldots, \sigma(M)\}$. Our goal is to relate $M$ with $N$ and then bound $M$ using Corollary \ref{cor:weakConcatSqrt}. We first claim that for each $j \in \{1, \ldots, N\}$, there exists $k$ such that \begin{equation}\label{eqn:displayMain1}
d_{S} \Big( \left\{w_{j}s_{j}, w_{j}s_{j} \varphi, \ldots, w_{j} s_{j}  \varphi^{L_{map}} \right\},\, \left\{w_{\sigma(k)}s_{\sigma(k)}, \ldots, w_{\sigma(k)} s_{\sigma(k)}  \varphi^{L_{map}} \right\} \Big) \le E_{0}.
\end{equation}
By the maximality, there exists $k$ such that $[p_{j}', q_{j}']_{\mathcal{C}}$ and $[p_{\sigma(k)}', q_{\sigma(k)}']_{\mathcal{C}}$ intersect. Then  $[p_{j}, q_{j}]_{\mathcal{C}}$ and $[p_{\sigma(k)}, q_{\sigma(k)}]_{\mathcal{C}}$ overlap for length at least $12(E_{0} + 100\delta)$. By Fact \ref{fact:fellowOverlap}, the projections of the beginning and the ending points of $\Proj \gamma_{k}$ onto $\Proj \gamma_{\sigma(k)}$ are $(2E_{0} + 200\delta)$-far, the former one appearing earlier. Fact \ref{fact:pseudoAnosov}(2) now settles the claim.

In the above situation, $w_{j}s_{j}$ and $w_{\sigma(k)} s_{\sigma(k)}$ differ by a mapping class of the form $\varphi^{m_{1}} V \varphi^{m_{2}}$ for some $m_{1}, m_{2} \in [-L_{map}, L_{map}]$ and $V \in B_{S}(E_{0})$. This means that $w_{j}s_{j}$ lies in the $d_{S}$-ball of radius $2K_{map}L_{map} + E_{0}$ centered at $w_{\sigma(k)} s_{\sigma(k)}$.  Also, by applying \[
U^{-1} = v_{j}^{-1} t_{j}^{-1} \varphi^{-L_{map}} s_{j}^{-1} w_{j}^{-1} = v_{\sigma(k)}^{-1} t_{\sigma(k)}^{-1} \varphi^{-L_{map}} s_{\sigma(k)}^{-1} w_{\sigma(k)}^{-1}
\] to the mapping classes in Display \ref{eqn:displayMain1}, we observe that $v_{j}^{-1} t_{j}^{-1}$ lies in the $(E_{0} + 2K_{map}L_{map})$-neighborhood of $v_{\sigma(k)}^{-1} t_{\sigma(k)}^{-1}$. When $w_{j} s_{j}$ and $t_{j} v_{j}$ are determined, the choices of $s_{j}$ and $t_{j}$ in $B_{S}(1)$ and $l_{j}$ in $B_{S}(K_{map} L_{map} + 2)$ completely determine $g_{j}$. To sum up, we have\[\begin{aligned}
g_{j} &\in \big\{ w_{j} l v_{j} : l \in B_{S}(K_{map}L_{map} + 2) \big\}\\
& \subseteq \left\{ \begin{array}{c}\left(w_{\sigma(k)}s_{\sigma(k)} \cdot \varphi^{m_{1}} \cdot V \cdot \varphi^{m_{2}} \cdot s^{-1} \right) \cdot l \cdot \left(t^{-1} \cdot \varphi^{m_{3}} \cdot V' \cdot \varphi^{m_{4}} \cdot t_{\sigma(k)} v_{\sigma(k)} \right) : \\ V, V' \in B_{S}(E_{0}), \,\,l \in B_{S}(K_{map} L_{map} + 2), \\m_{1}, \ldots, m_{4} \in \{-L_{map}, \ldots, L_{map}\}, s, t \in B_{S}(1)\end{array}\right\}\\
&\subseteq \big\{ w_{\sigma(k)}s_{\sigma(k)} V'' t_{\sigma(k)} v_{\sigma(k)} : V'' \in B_{S}(5K_{map}L_{map} + 2E_{0} + 4)\big\}.
\end{aligned}
\]
Also, $i_{\sigma(k)} = \|w_{\sigma(k)}\|_{S}$ and $i_{j} = \|w_{j}\|_{S}$ differ by at most \[
\| s_{\sigma(k)} \|_{S} + \|\varphi^{m_{1}}\|_{S} + \|V\|_{S} + \|\varphi^{m_{2}}\|_{S} + \|s\|_{S} \le E_{0} + 2K_{map}L_{map} + 2.
\]
That means, given the information of $i_{\sigma(k)}$, there are at most $2(E_{0} + 2K_{map}L_{map} + 2)$ candidates for $i_{k}$. In summary, we have\[\begin{aligned}
N &= \#\{(g_{j}, i_{j}): j=1,\ldots, N\} \\
&\le \#\{(g_{\sigma(k)}, i_{\sigma(k)}) : k =1, \ldots, M\} \cdot \#B_{S}(5K_{map}L_{map} + 2E_{0} + 4) \cdot 2 (E_{0} + 2K_{map}L_{map} + 2) \\
&\le 2M \cdot (2\#S + 1)^{5K_{map}L_{map} + 2E_{0} + 4} \cdot (E_{0} + 2K_{map}L_{map} + 2).
\end{aligned}
\]
Hence, we can bound the cardinality $N$ of $F_{n}^{-1}(U)$ as soon as we bound the number $M$ of elements $(g_{k}, i_{k})$'s for which the associated subsegments $[p_{k}', q_{k}']_{\mathcal{C}}$'s of $[x_{0}, Ux_{0}]_{\mathcal{C}}$ are disjoint.

Given this, we now assume that $[p_{1}', q_{1}']_{\mathcal{C}}, \ldots, [p_{M}', q_{M}']_{\mathcal{C}}$'s are disjoint and estimate $M$. Up to relabeling, $[p_{1}', q_{1}']_{\mathcal{C}},, \ldots, [p_{M}', q_{M}']_{\mathcal{C}},$ are in order from left to right. Then $[p_{i-1}', q_{i-1}']_{\mathcal{C}},$ and $[p_{i}', q_{i}']_{\mathcal{C}},$ are subsegments of the same $[x_{0}, Ux_{0}]_{\mathcal{C}}$, the former one coming earlier. Recall that $\Proj \gamma_{i}$ and $[p_{i}, q_{i}]_{\mathcal{C}},$ are $(E_{0}+100\delta)$-fellow traveling, and $[p_{i}, q_{i}]_{\mathcal{C}},$ and $[p_{i}', q_{i}']_{\mathcal{C}},$ are $(6E_{0} + 600\delta)$-fellow traveling by definition. Hence, $\Proj \gamma_{i-1}$ is  $(7E_{0} + 700\delta)$-fellow traveling with $[p_{i}', q_{i}']_{\mathcal{C}}$. By Fact \ref{fact:fellowAlign},  $(\Proj \gamma_{i-1}, \Proj \gamma_{i})$ is $42(E_{0} + 100\delta)$-aligned. In summary, we have that \begin{equation}\label{eqn:alignMainThm}
\big(x_{0},\, \Proj \gamma_{1}, \ldots, \, \Proj \gamma_{M}, \,Ux_{0}\big)
\end{equation}
is $42(E_{0} +  100\delta)$-aligned, and hence $K_{map}$-aligned. The same argument implies that $(\Proj \gamma_{1}, \Proj \gamma_{M})$ is $K_{map}$-aligned. Proposition \ref{prop:weakConcat1} then applies to the sequence $(x_{0}, \Proj \gamma_{1}, \Proj \gamma_{M})$. Namely, let $q$ be the midpoint of $\gamma_{1}$. Then for each $p \in \gamma_{M}$, there exists $q' \in [id, p]_{S}$ such that  \begin{equation}\label{eqn:g1gm}\begin{aligned}
d_{S}(q', q) &\le 0.01 \big(d_{S}(id, q) + d_{S}(id, p) \big), \\
d_{S}(id, p) &= d_{S}(id, q') + d_{S}(q', p) \ge d_{S}(id, q) + d_{S}(q, p) - 2 d_{S}(q', q) \\
&\ge 0.98d_{S}(id, q) +0.98 d_{S}(q, p) \ge 0.98 d_{S}(id, \gamma_{1}) + d_{S}(\gamma_{1}, \gamma_{M}).
\end{aligned}
\end{equation}
Since $d_{S}(id, \gamma_{1}), d_{S}(id, \gamma_{M}) \in [0.27n, 0.28n]$, we deduce that $d_{S}(\gamma_{1}, \gamma_{M}) \le 0.016n$. Meanwhile, note\[\begin{aligned}
d_{S}(U g_{1}^{-1}, w_{1}s_{1}) &= d_{S}( id, g_{1} U^{-1} \cdot w_{1}s_{1}) = d_{S}(id, w_{1} l_{1} \cdot t_{1}^{-1} \varphi^{-L_{map}}  )\\
& \le \|w_{1}\|_{S} + (K_{map}L_{map} + 2)+ 1+ K_{map}L_{map} \le 0.2805n,
\end{aligned}
\]
which implies \[\begin{aligned}
\diam_{S}(U g_{1}^{-1} \cup \gamma_{M}) &\le d_{S}(U g_{1}^{-1}, w_{1}s_{1} ) + \diam_{S}(w_{1}s_{1} \cup \gamma_{M}) \\
&\le 0.2805n + \diam_{S}(\gamma_{1}) + d_{S}(\gamma_{1}, \gamma_{M}) + \diam_{S}(\gamma_{M})  \le 0.297n.
\end{aligned}
\] 
A similar estimate shows that $\diam_{S}(U g_{1}^{-1} \cup \gamma_{M}) \ge 0.26n$.

Now recall again Display \ref{eqn:alignMainThm}: $(\Proj \gamma_{M-1}, \Proj \gamma_{M})$ is $42(E_{0} + 100\delta)$-aligned. By Fact \ref{fact:Behr}, at least one of the following is true: \begin{itemize}
\item $\big(Ug_{1}^{-1} x_{0}, \Proj \gamma_{M}\big)$ is $K_{map}$-aligned, or
\item $\big(\Proj \gamma_{M-1}, Ug_{1}^{-1} x_{0} \big)$ is $K_{map}$-aligned.
\end{itemize}
In the former case, we have a $K_{map}$-aligned sequence $
\big(x_{0}, g_{1} U^{-1} \Proj \gamma_{M}, g_{1} x_{0}\big)$ involving an $L_{map}$-long $\varphi$-orbit sequence $g_{1} U^{-1} \gamma_{M}$, where \[
0.26\|g_{1}\|_{S} \le 0.26n \le \diam_{S}\big(id \cup  g_{1} U^{-1} \gamma_{M}\big) \le 0.297n \le 0.3 \|g_{1}\|_{S}.
\] This contradicts the fact that $g_{1}$ is outside $\mathcal{A}_{thick}$. 

Hence, we are led to the latter case, that $\big(\Proj \gamma_{M-1}, Ug_{1}^{-1} x_{0}\big)$ is $K_{map}$-aligned. Meanwhile, note that $U v_{1}^{-1} x_{0}= w_{1} s_{1} \varphi^{L_{map}} t_{1} x_{0}$ is $C_{0}$-close to the ending point $w_{1}s_{1} \varphi^{L_{map}} x_{0}$ of $\Proj \gamma_{1}$. Since $(\Proj \gamma_{1}, \Proj \gamma_{2})$ is $42(E_{0} + 100\delta)$-aligned, Fact \ref{fact:hyperbolic}(1) tells us that $(Uv_{1}^{-1} x_{0}, \Proj \gamma_{2})$ is $(C_{0} +42E_{0} + 4220\delta)$-aligned and hence $K_{map}$-aligned.

In summary, the following two sequences are $K_{map}$-aligned: \begin{equation}\label{eqn:alignMainSeq}
\begin{aligned}
\left( U v_{1}^{-1} x_{0}, \Proj \gamma_{2}, \ldots, \Proj \gamma_{M-1}, Ux_{0}\right),  \quad \left( U v_{1}^{-1} x_{0}, \Proj \gamma_{2}, \ldots, \Proj \gamma_{M-1},Ug_{1}^{-1}x_{0}\right).
\end{aligned}
\end{equation}
Note that $Uv_{1}^{-1}$ is on the $d_{S}$-geodesic $[U, Ug_{1}^{-1}]_{S}$, as $v_{1}$ was a subword of the geodesic $w_{1} l_{1} v_{1}$ for $g_{1}$. Now Corollary \ref{cor:weakConcatSqrt} implies $K_{map}(M - L_{map}-2)^{2} \le d_{S}(U, Ug_{1}^{-1}) = \|g_{1}\|_{S} \le n$ as desired.
\end{proof}

Let us now prove Theorem \ref{thm:mainTr}.

\begin{proof}
We begin with the inclusion $\mathcal{NPA} \cap B_{S}(n) \subseteq B_{S}(0.99n) \cup A_{1}(n) \cup A_{2}(n)$, where\begin{equation}\label{eqn:mainTrDisp}\begin{aligned}
A_{1}(n) &:= \Big( B_{S}(n) \setminus \big(B_{S}(0.99n) \cup \mathcal{A}_{thick} \big) \Big),\\
A_{2}(n) &:=  \Big( \mathcal{NPA} \cap \mathcal{A}_{thick} \cap \big(B_{S}(n) \setminus B_{S}(0.99n) \big) \Big).
\end{aligned}
\end{equation}
By Lemma \ref{lem:fekete} and \ref{lem:mainGenLem}, $B_{S}(0.99n)$ and $A_{2}(n)$ are exponentially negligible in $B_{S}(n)$. Next, we constructed a map $F_{n}$ from $A_{1}(n) \times \{\lceil0.274n\rceil, \ldots, \lfloor0.275n\rfloor\}$ into $B_{S}(n)$. Lemma \ref{lem:almostInj} tells us that  \[\begin{aligned}
\#B_{S}(n) &\ge \# \operatorname{Im} F_{n} \ge \sum_{ (g, i) \in \operatorname{Dom} F_{n}} \frac{1}{\# F_{n}^{-1} (F_{n}(g, i))} \ge \sum_{(g, i) \in \operatorname{Dom} F_{n}} \frac{1}{C\sqrt{n}} \\
&= \frac{1}{C\sqrt{n}} \cdot (0.001n -2)\cdot \# A_{1}(n).
\end{aligned}
\]
This implies that $\#A_{1}(n) \lesssim \frac{1}{\sqrt{n}} \cdot \#B_{S}(n)$ as desired.
\end{proof}

\section{Superpolynomial genericity}\label{section:superpolynomial}

In this section, we improve the convergence rate in Theorem \ref{thm:mainTr}. Considering the exponential negligibility of $B_{S}(0.99n)$ and $\Big( \mathcal{NPA} \cap \mathcal{A}_{thick} \cap \big(B_{S}(n) \setminus B_{S}(0.99n) \big) \Big)$, we only need to prove that $ \#\Big( B_{S}(n) \setminus \big(B_{S}(0.99n) \cup \mathcal{A}_{thick} \big) \Big) \lesssim n^{-k/2} \cdot \#B_{S}(n)$ for an arbitrary $k>0$. We will present an argument for $k=2$ for notational convenience; the general case follows from the same idea.

Let $n$ be large enough. This time, let us consider \[
Ind_{2}(n) := \big\{ (i, j) \in \Z : i, j \in [0.274n, 0.275n]\,\,\textrm{and}\,\, i < j - 2K_{map}L_{map} - 3\big\}.
\]
Using this index set, we will construct two maps \[
G_{n;1}, G_{n;2} : \operatorname{Dom}_{n} := \Big( B_{S}(n) \setminus (B_{S}(0.99n) \cup \mathcal{A}_{thick} ) \Big) \times Ind_{2}(n) \rightarrow B_{S}(n).
\]
Given an input $(g, i, j) \in \operatorname{Dom}_{n}$, we fix a $d_{S}$-geodesic $a_{1} \cdots a_{\|g\|_{S}}$ representing $g$ and define \[
\begin{aligned}
w = w(g, i, j) &:= a_{1} a_{2}\cdots a_{i}, \\
l = l(g, i, j) &:=  a_{i+1}a_{i+2} \cdots a_{i+K_{map} L_{map} + 2}, \\
w' = w'(g, i, j) &:= a_{i+K_{map}L_{map} + 3} \cdots a_{j}, \\
l' = l(g, i, j) &:= a_{j+1} a_{j+1}\cdots a_{j+K_{map} L_{map} + 2}, \\
v = v(g, i, j) &:= a_{j+K_{map} L_{map} + 3} \cdots a_{\|g\|_{S}}.
\end{aligned}
\]
By Fact \ref{fact:pseudoAnosov} and Lemma \ref{lem:projApprox}, there exist $s, t, s', t' \in \{id\} \cup S$ such that \[
\Big( x_{0}, \, \,ws [x_{0} , \varphi^{L_{map}}x_{0}]_{\mathcal{C}}, \,\, ws \varphi^{L_{map}}t w' s' [x_{0}, \varphi^{L_{map}} x_{0}]_{\mathcal{C}}, \,\, ws\varphi^{L_{map}} t w' s' \varphi^{L_{map}} t' v x_{0} \Big)
\]
is $(E_{0} + 8\delta)$-aligned. We then define \[
\begin{aligned}
G_{n;1}(g, i, j) &:= ws \varphi^{L_{map}} tw', \\
G_{n;2}(g, i, j) &:= ws \varphi^{L_{map}} tw' s' \varphi^{L_{map}} t' v.
\end{aligned}
\]

In the above, we have \[
\|G_{n;2}(g, i, j) \|_{S} \le \|w\|_{S} + \|w'\|_{S} + \|v\|_{S} + \|s\|_{S} + \|t\|_{S} + \|s'\|_{S} + \|t'\|_{S} + 2L_{map}  \|\varphi\|_{S} \le n
\]
so $G_{n; 2}$ is a map into $B_{S}(n)$. Since $\|w\|_{S}$ lies in $[0.274n, 0.275n]$, we have \[
\Big| \|w\|_{S} - d_{S}\big(id, ws(id, \ldots, \varphi^{L_{map}}) \big) \Big| \le \|s\|_{S} + L_{map} \|\varphi\|_{S}, \quad d_{S}\big(id, ws(id, \ldots, \varphi^{L_{map}})\big) \in [0.27n, 0.28n].
\]
Also, $\|w\|_{S}$ and $d_{S} \big(id, ws\varphi^{L_{map}} t w' s' \big(id, \ldots, \varphi^{L_{map}}\big) \big)$ differ by at most $\|w'\|_{S} + \|s\|_{S} + \|t\|_{S} + \|s'\|_{S} + L_{map} \|\varphi\|_{S}$, which is smaller than $0.002n$. (Note that $\|w'\|_{S} \le j-i \le 0.001n$.) Hence, $d_{S} \big(id, ws\varphi^{L_{map}} t w' s' \big(id, \ldots, \varphi^{L_{map}}\big) \big)$ also lies in $[0.27n, 0.28n]$.

By Lemma \ref{lem:Behrstock}, the following sequences are each $(E_{0} + 70\delta)$-aligned: \begin{equation}\label{eqn:superpolyAlign}\begin{aligned}
\big( x_{0},  \,ws \varphi^{L_{map}}t w' s' [x_{0}, \varphi^{L_{map}} x_{0}]_{\mathcal{C}}, \,ws\varphi^{L_{map}} t w' s' \varphi^{L_{map}} t' v x_{0} \big), \\
\big( x_{0},  \,ws [x_{0}, \varphi^{L_{map}}x_{0}]_{\mathcal{C}}, \,ws\varphi^{L_{map}} t w' s' \varphi^{L_{map}} t' v x_{0} \big).
\end{aligned}
\end{equation}

We now claim that: \begin{lem}\label{lem:almostInjLev1}
There exists $C>0$, not depending on $n$, such that \[
\Big\{ \big(G_{n; 1}(g, i, j), l'(g, i, j), v(g, i, j) \big) : (g, i, j) \in \operatorname{Dom}_{n}, G_{n;2}(g, i, j) = U \Big\}
\]
has cardinality at most $C \sqrt{n}$ for each $U \in B_{S}(n)$.
\end{lem}

\begin{proof}
Note that, when $G_{n; 1}(g, i, j)$ and $G_{n; 2}(g, i, j)$ are given, \[
v(g, i, j) = \left(s' \cdot \varphi^{L_{map}} \cdot t'\right)^{-1} \cdot \left(G_{n;1}(g, i, j) \right)^{-1} \cdot G_{n; 2}(g, i, j)
\]
is determined by the choice of $s', t' \in B_{S}(1)$. Also recall that $l'(g, i, j) \in B_{S}(K_{map} L_{map} + 2)$. Considering this, we only need to bound the cardinality of $G_{n; 1} \big( G_{n;2}^{-1}(U) \big)$. In fact, the proof of Lemma \ref{lem:almostInj} applies here almost verbatim, so we just sketch the outline. 

Let $N$ be the cardinality of $G_{n;1}  \big( G_{n;2}^{-1}(U) \big)$. Then there exist elements $(g_{1}, i_{1}, j_{1}), \ldots, (g_{N}, i_{N}, j_{N}) \in \operatorname{Dom}_{n}$  with the  $G_{n;2}$-value $U$, while having distinct $G_{n;1}$-values. We use the notation $w_{k}, w'_{k}, v_{k}, s_{k}, t_{k}, s_{k}', t_{k}'$, together with $\mathfrak{w}_{k} := G_{n;1}(g_{k}, i_{k}, j_{k})$, and denote the sequence $\mathfrak{w}_{k}s_{k}' (id, \varphi, \ldots, \varphi^{2L_{map}})$ by $\gamma_{k}$. Thanks to the alignment of the sequences in Display \ref{eqn:superpolyAlign} and 
 Fact \ref{fact:hyperbolic}(2), $[x_{0}, Ux_{0}]_{\mathcal{C}}$ contains a subsegment $[p_{k}, q_{k}]_{\mathcal{C}}$ that is $(E_{0} + 100\delta)$-fellow traveling with $\Proj \gamma_{k}$. We again define the subsegment $[p_{k}', q_{k}']$ of $[p_{k}, q_{k}]$ by trimming $(6E_{0} +600 \delta)$-long initial and terminal subsegments.

As before, if $[p_{k}', q_{k}']_{\mathcal{C}}$ and $[p_{j}', q_{j}']_{\mathcal{C}}$ intersects for some $k$ and $j$, then $[p_{k}, q_{k}]_{\mathcal{C}}$ and $[p_{j}, q_{j}]_{\mathcal{C}}$ intersect for length at least $12(E_{0} + 100\delta)$. By using Fact \ref{fact:fellowOverlap} and Fact \ref{fact:pseudoAnosov}(2), we deduce that $d_{S}(\mathfrak{w}_{k}s_{k}' \varphi^{m_{1}}, \mathfrak{w}_{j} s_{j}' \varphi^{m_{2}})<E_{0}$ for some $m_{1}, m_{2} \in [-L_{map}, L_{map}]$. Hence, if we prepare a maximal collection  $[p_{\sigma(1)}', q_{\sigma(1)}']$, $\ldots$, $[p_{\sigma(M)}', q_{\sigma(M)}']$ of disjoint $[p_{k}', q_{k}']$'s among $\{[p_{1}', q_{1}'], \ldots, [p_{N}', q_{N}']\}$, then the number of possible $\mathfrak{w}_{k}'$'s is bounded by $ (2\#S + 1)^{5K_{map}L_{map} + 2E_{0} + 4}\cdot \# \{ \mathfrak{w}_{\sigma(1)}', \ldots, \mathfrak{w}_{\sigma(M)}'\}$. Hence, $N$ is linearly bounded by $M$, the maximal number of disjoint $[p_{k}', q_{k}']$'s.

Given this, we now assume that $[p_{1}', q_{1}']$, $\ldots$, $[p_{M}', q_{M}']$ are disjoint subsegments of $[x_{0}, Ux_{0}]_{\mathcal{C}}$ aligned in order from left to right. It follows that $(x_{0}, \gamma_{1}, \ldots, \gamma_{M}, U x_{0})$ is $42(E_{0} + 100\delta)$-aligned, and $(\gamma_{1}, \gamma_{M})$ is $42(E_{0} + 100\delta)$-aligned. By Proposition \ref{prop:weakConcat1} and the fact that $d_{S}(id, \gamma_{1})$, $d_{S}(id, \gamma_{M})$ are both in $[0.27n, 0.28n]$, we deduce that $d_{S}(\gamma_{1}, \gamma_{M}) \le 0.016n$ (see Display \ref{eqn:g1gm}).

Since $(\Proj \gamma_{M-1}, \Proj \gamma_{M})$ is $42(E_{0} + 100\delta)$-aligned, Fact \ref{fact:Behr} tells us that either $(U g_{1}^{-1} x_{0}, \Proj \gamma_{M})$ is $K_{map}$-aligned or $(\Proj \gamma_{M-1}, U g_{1}^{-1} x_{0})$ is $K_{map}$-aligned. In the first case, $(x_{0}, g_{1} U^{-1} \Proj \gamma_{M}, g_{1} x_{0})$ is $K_{map}$-aligned and \[
0.26\|g_{1}\|_{S} \le \diam_{S}\big(id \cup g_{1} U^{-1} \gamma_{M} \big) \le 0.3\|g_{1}\|_{S}, 
\]
which is a contradiction to  $g_{1} \notin \mathcal{A}_{thick}$. Hence, $(\Proj \gamma_{M-1}, Ug_{1}^{-1} x_{0})$ is $K_{map}$-aligned. Noting that $v_{1}^{-1}$ is on the $d_{S}$-geodesic $[g_{1}^{-1}, id]_{S}$, we apply Corollary \ref{cor:weakConcatSqrt} to the aligned sequences \[
\begin{aligned}
\big( U v_{1}^{-1} x_{0}, \Proj \gamma_{2}, \ldots, \Proj \gamma_{M-1}, Ux_{0} \big), \\
\big( Uv_{1}^{-1} x_{0}, \Proj \gamma_{2}, \ldots, \Proj \gamma_{M-1}, U g_{1}^{-1} x_{0} \big),
\end{aligned}
\]
and deduce that $K_{map}(M - L_{map} - 2)^{2} \le n$ as desired.
\end{proof}

Our next claim is: 

\begin{lem}\label{lem:almostInjLev2}
There exists $C'>0$, not depending on $n$, such that \[
\big\{ (g, i, j) \in \operatorname{Dom}_{n} :  G_{n; 1}(g, i, j) = \mathfrak{w}_{0}, \,l'(g, i, j) = l_{0}',\, v(g, i, j) = v_{0} \big\}
\]
has cardinality at most $C' \sqrt{n}$ for every $\mathfrak{w}_{0}, l_{0}', v_{0} \in \Mod(\Sigma)$.
\end{lem}

\begin{proof}
Given the information $G_{n; 1}(g, i, j) = \mathfrak{w}_{0}, l'(g, i, j) = l_{0}', v(g, i, j) = v_{0}$, we claim that $w=w(g, i, j)$ almost determines $(g, i, j)$. Indeed, the element \[
w'(g, i, j) = (s \varphi^{L_{map}} t)^{-1} \cdot w^{-1} \cdot G_{n;1}(g, i, j)
\]
is determined by $w(g, i, j)$ and the choice of $s, t \in B_{S}(1)$. Also recall that $l(g, i, j) \in B_{S}(K_{map} L_{map} + 2)$. Up to these additional choices, we can pin down $g = w l w' l' v$ among finitely many candidates, as well as $i = \|w\|_{S}$ and $j=\|w l w'\|_{S}$. Hence, it suffices to show that the cardinality of \[
\big\{ w(g, i, j) :  G_{n; 1}(g, i, j) = \mathfrak{w}_{0},\, l'(g, i, j) = l_{0}', \,v(g, i, j) = v_{0} \big\}
\]
is coarsely dominated by $\sqrt{n}$. To observe this, let us pick elements $(g_{1}, i_{1}, j_{1}), \ldots, (g_{N}, i_{N}, j_{N})$ of $\operatorname{Dom}_{n}$ whose $G_{n;1}$-, $l'$- and $v$-values are as prescribed. We keep using the notation $w_{k}, w_{k}',  s_{k}, t_{k}$, and denote the sequence $w_{k}s_{k}(id, \varphi, \ldots, \varphi^{L_{map}})$ by $\kappa_{k}$.

For each $k$, $\big( x_{0}, w_{k}s_{k}[x_{0}, \varphi^{L_{map}}x_{0}]_{\mathcal{C}}, \mathfrak{w}_{0} x_{0} \big)$ is $(E_{0}+8\delta)$-aligned. Similarly to the previous situation, $[x_{0}, \mathfrak{w}_{0}x_{0}]_{\mathcal{C}}$ contains a subsegment $[y_{k}, z_{k}]_{\mathcal{C}}$ that is $(E_{0} + 100\delta)$-fellow traveling with $\Proj \kappa_{k}$. We again trim each end of $[y_{k}, z_{k}]$ for length $6E_{0} + 600\delta$ to define $[y_{k}', z_{k}']$.

Again, if $[y_{k}', z_{k}']$ and $[y_{j}', z_{j}']$ intersect for some $k$ and $j$, then the WPD-ness of $\varphi$ implies that $w_{k} s_{k} \varphi^{m_{1}}$ and $w_{j}s_{j} \varphi^{m_{2}}$ are $E_{0}$-close for some $m_{1}, m_{2} \in [-L_{map}, L_{map}]$. From this, we deduce that $N$ is linearly bounded by the maximal number of disjoint $[y_{k}', z_{k}']$'s.

Given this, we now assume that $[y_{1}', z_{1}']$, $\ldots$, $[y_{M}', z_{M}']$ are disjoint subsegments of $[x_{0}, \mathfrak{w}_{0}x_{0}]_{\mathcal{C}}$ aligned in order from left to right. We deduce that 
\begin{equation}\label{eqn:centralContrad}
(x_{0}, \Proj \kappa_{i}, \Proj \kappa_{j}, \mathfrak{w}_{0} x_{0})\,\,\textrm{is $42(E_{0} + 100\delta)$-aligned for all $i< j$}.
\end{equation} 
Proposition \ref{prop:weakConcat1} again implies that \[
d_{S}(id, \kappa_{j}) \ge 0.98d_{S}(id, \kappa_{1}) + 0.98d_{S}(\kappa_{1}, \kappa_{j})
\]
for each $j$, which implies that $d_{S}(\kappa_{1}, \kappa_{j}) \le 0.016n$. $(\ast$) Here, note that \begin{equation}\label{eqn:intermedContrad}
d_{S} \big( \mathfrak{w}_{0} \cdot (w_{1}')^{-1}, w_{1}s_{1}\varphi^{L_{map}}\big) = d_{S}\big( w_{1} s_{1} \varphi^{L_{map}} t_{1}, w_{1}s_{1}\varphi^{L_{map}} \big) \le 1.
\end{equation}
This implies that $\mathfrak{w}_{0} \cdot (w_{1}')^{-1} x_{0}$ is $C_{0}$-close to $\Proj \gamma_{1}$. Since $\big(\Proj \kappa_{1}, \Proj \kappa_{2} \big)$ is $42(E_{0} + 100\delta)$-aligned, Fact \ref{fact:hyperbolic}(1) tells us that:\begin{equation}\label{eqn:inter2Contrad}
\textrm{$\big(\mathfrak{w}_{0} \cdot (w_{1}')^{-1} x_{0}, \Proj \kappa_{2} \big)$ is $K_{map}$-aligned}.
\end{equation}

Meanwhile, note that the quantity
 \[\begin{aligned}
d_{S}\big( \mathfrak{w}_{0} \cdot (w_{1} l_{1} w_{1}')^{-1}, w_{1}s_{1} \big) &= d_{S}\Big( (w_{1} s_{1} \varphi^{L_{map}} t_{1} w_{1}') \cdot \big(w_{1}'\big)^{-1} \cdot l_{1}^{-1} w_{1}^{-1}, w_{1} s_{1}\Big) \\
&= d_{S} \big( \varphi^{L_{map}} t_{1} l_{1}^{-1} w_{1}^{-1}, id \big)
\end{aligned}
\]
and $\|w_{1}\|_{S} \in [0.274n, 0.275n]$ differ by at most $ \|t_{1}\|_{S} +\|l_{1}\|_{S}+ L_{map}\|\varphi\|_{S}$, a uniformly bounded amount. Here, $w_{1}s_{1} \in \kappa_{1}$ and $\kappa_{j}$ is also $\big(0.016n + \diam_{S}(\kappa_{1})\big)$-close by $(\ast)$. Combining altogether, we get \begin{equation}\label{eqn:contrad}
\diam_{S}\big(\mathfrak{w}_{0} \cdot (w_{1} l_{1} w_{1}')^{-1} \cup \kappa_{j} \big) \in [0.25n, 0.297n] \subseteq \Big[0.25\|g_{1}\|_{S}, \,0.3\|g_{1}\|_{S}\Big] \quad (j=1, \ldots, M).
\end{equation}
We now ask if the following sequences are $K_{map}$-aligned or not: \[
\big( \mathfrak{w}_{0} \cdot (w_{1} l_{1} w_{1}')^{-1} x_{0},\, \Proj \kappa_{\lfloor M/2\rfloor} \big), \quad
\big(\Proj \kappa_{\lfloor M/2\rfloor}, \, \mathfrak{w}_{0} l'_{0} v_{0} x_{0} \big).
\]
If both of them are $K_{map}$-aligned at the same time, then \[
\big(x_{0}, \, (w_{1} l_{1} w_{1}') \cdot \mathfrak{w}_{0}^{-1} \Proj \kappa_{\lfloor M/2\rfloor}, \, w_{1} l_{1} w_{1}' l_{0}' v_{0} x_{0}= g_{1} x_{0}\big)
\]
is $K_{map}$-aligned. Combining this with Display \ref{eqn:contrad}, we conclude that $g_{1} \in \mathcal{A}_{thick}$, a contradiction.

Hence, at least one of the above sequences is not $K_{map}$-aligned. \begin{enumerate}
\item[Case 1.] $\big( \mathfrak{w}_{0} \cdot (w_{1} l_{1} w_{1}')^{-1} x_{0},\, \Proj \kappa_{\lfloor M/2\rfloor} \big)$ is not $K_{map}$-aligned. In this case, since $(\Proj \kappa_{\lfloor M/2\rfloor-1}, \Proj \kappa_{\lfloor M/2\rfloor})$ is $42(E_{0} + 100\delta)$-aligned (Display \ref{eqn:centralContrad}), Fact \ref{fact:Behr} implies that $\big( \Proj \gamma_{\lfloor M/2\rfloor-1}, \, \mathfrak{w}_{0} \cdot (w_{1} l_{1} w_{1}')^{-1} x_{0} \big)$ is $K_{map}$-aligned. Summing up with Display \ref{eqn:centralContrad} and Display \ref{eqn:inter2Contrad}, the following sequences are $K_{map}$-aligned: \[
\begin{aligned}
\big( \mathfrak{w}_{0} \cdot (w_{1}')^{-1} x_{0},\,&\Proj \kappa_{2}, \,& \ldots, & \Proj \kappa_{\lfloor M/2\rfloor-1},\, & \mathfrak{w}_{0} \cdot (w_{1} l_{1} w_{1}')^{-1} x_{0} \big), &\\
\big( \mathfrak{w}_{0} \cdot (w_{1}')^{-1} x_{0},\,&\Proj \kappa_{2},\,& \ldots, & \Proj \kappa_{\lfloor M/2\rfloor-1}, \,& \mathfrak{w}_{0}  x_{0} \big).&
\end{aligned}
\]
Note that $\mathfrak{w}_{0} (w_{1}')^{-1}$ is on the $d_{S}$- geodesic $[ \mathfrak{w}_{0}, \mathfrak{w}_{0} (w_{1}l_{1} w_{1}')^{-1}]_{S}$. Corollary \ref{cor:weakConcatSqrt} tells us that $K_{map}(M/2 - L_{map} - 3)^{2} \le \|w_{1} l_{1} w_{1}'\|_{S} \le n$ as desired.

\item[Case 2.] $\big( \Proj \kappa_{\lfloor M/2\rfloor},\, \mathfrak{w}_{0} l_{0}'v_{0} x_{0}\big)$ is not $K_{map}$-aligned. In this case, since $(\Proj \kappa_{\lfloor M/2\rfloor}, \Proj \kappa_{\lfloor M/2\rfloor+1})$ is $42(E_{0} + 100\delta)$-aligned (Display \ref{eqn:centralContrad}), Fact \ref{fact:Behr} implies that $\big(\mathfrak{w}_{0} l_{0}'v_{0} x_{0},\, \Proj \gamma_{\lfloor M/2\rfloor+1}\big)$ is $K_{map}$-aligned. Summing up with Display \ref{eqn:centralContrad} and Display \ref{eqn:inter2Contrad}, the following are $K_{map}$-aligned: \[
\begin{aligned}
\big( \mathfrak{w}_{0} \cdot (w_{1}')^{-1} x_{0},\,&\Proj \kappa_{\lfloor M/2\rfloor+1}\,& \ldots, \,& \Proj \kappa_{M}, \,& \mathfrak{w}_{0} x_{0}\big), &\\
\big(\mathfrak{w}_{0} l_{0}'v_{0} x_{0},\,&\Proj \kappa_{\lfloor M/2\rfloor+1}\,& \ldots, \,& \Proj \kappa_{M}, \, &\mathfrak{w}_{0}  x_{0} \big).&
\end{aligned}
\]
This time, $\mathfrak{w}_{0}$ is on the $d_{S}$-geodesic $[\mathfrak{w}_{0} (w_{1}')^{-1}, \mathfrak{w}_{0} l_{0}' v_{0}]_{S}$, which is part of $\mathfrak{w}_{0} (w_{0} l_{0}w_{1}')^{-1} [id, g_{1}]_{S}$. Corollary \ref{cor:weakConcatSqrt} tells us that $K_{map}(M/2 - L_{map})^{2} \le \|\mathfrak{w}_{0} l_{0}' v_{0}\|_{S} \le n$ as desired. \qedhere
\end{enumerate}
\end{proof}

Given these lemmata, we can now show: 

\begin{thm}\label{thm:mainSpeed}
Let $S$ be a finite generating set of $\Mod(\Sigma)$. Then for each $k>0$, there exists $K>0$ such that  \[
\frac{\# \{ g \in B_{S}(R): \textrm{$\tau_{\mathcal{C}}(g) \le 10$ or $\tau_{S}(g) \le 0.33R$}\}}{\# B_{S}(R)} \le \frac{K}{R^{k/2}}
\]
holds for all $R>0$.
\end{thm}

\begin{proof}
As mentioned earlier, we will only prove the case $k=2$ for simplicity. Recall the sets $A_{1}(n)$ and $A_{2}(n)$ in Display \ref{eqn:mainTrDisp}. $B_{S}(0.99n)$ and $A_{2}(n)$ are exponentially negligible in $B_{S}(n)$, and it remains to control $\#A_{1}(n)$.  Lemma \ref{lem:almostInjLev1} and \ref{lem:almostInjLev2} tell us that \[
\begin{aligned}
\#B_{S}(n) &\ge G_{n;2}(\operatorname{Dom}_{n}) \ge \sum_{(g, i, j) \in \operatorname{Dom}_{n}} \frac{1}{\# G_{n;2}^{-1} (G_{n;2}(g, i, j)) }\ge \sum_{(g, i, j) \in \operatorname{Dom}_{n}} \frac{1}{CC' n} \\
&\ge \frac{1}{CC' n} \cdot (0.001n-1)\cdot (0.001n-2)\cdot \#A_{1}(n).
\end{aligned}
\]
This implies that $\#A_{1}(n) \lesssim \frac{1}{n} \#B_{S}(n)$ as desired.
\end{proof}

\section{Counting problems in other groups}\label{section:application}

In this section, we review notions of several groups that our theory applies to. Let us first generalize the properties of pseudo-Anosov mapping classes. We first generalize the (strong) contracting property exhibited by geodesics in $\delta$-hyperbolic spaces.

\begin{dfn}[Strong contraction]\label{dfn:strongContract}
Let $\gamma$ be a path on a geodesic metric space $(X, d_{X})$. We say that $\gamma$ is \emph{$K$-strongly contracting} if there exists $K>0$ such that for each $x \in X$ with $d(x, \gamma) > K$, we have \[
\diam_{X}( \pi_{\gamma}(B)) \le K
\]
for the $d_{X}$-metric ball $B$ of radius $d_{X}(x, \gamma)$ centered at $x$. Here, $\pi_{\gamma}$ is the $d_{X}$-nearest point projection onto $\gamma$.

If the orbit $\{\phi^{i}\}_{i \in \Z}$ of an isometry $\phi$ of $X$ is a $K$-strongly contracting $K$-quasi-geodesic, we say that $\phi$ is \emph{$K$-strongly contracting}.
\end{dfn}

It is essential that the projection involved in Definition \ref{dfn:strongContract} is the nearest point projection with respect to the ambient metric. Due to this aspect, the strong contracting property of a quasigeodesic or a group element (in the word metric) is not QI-invariant.

Meanwhile, the following notion accommodates some freedom in the choice of the projection.

\begin{dfn}[Weak contraction]\label{dfn:weakContract}
Let $(G, d_{G})$ be a group equipped with a word metric, and let $(X, d)$ be a geodesic metric space with an isometric action by $G$ and with basepoint $x_{0} \in X$. Let $\Proj : G \rightarrow X$ be the projection map $g \mapsto g x_{0}$. Let $\gamma : I \rightarrow G$ be a path on $G$. 

A map $\pi_{\gamma} : G \rightarrow\operatorname{Im} \gamma \subseteq G$ is called a \emph{$K$-projection defined by means of $X$} if \begin{enumerate}
\item it is $K$-coarsely idempotent, i.e., $\diam_{S}(\pi_{\gamma}(g) \cup g) < K$ for each $g \in \operatorname{Im}  \gamma$, and 
\item it is $K$-coarsely compatible with the nearest point projection in $X$, i.e., \[
\diam_{X}(\Proj \circ \pi_{\gamma}(g)  \cup (\textrm{nearest point projection of $\Proj g$ onto $\Proj \gamma$}) \big) \le K
\]
for all $g \in G$.
\end{enumerate}

We say that $\gamma$ is \emph{weakly contracting by means of $X$} if there exists $K'>0$ and a projection $\pi_{\gamma}$ defined by means of $X$ such that, for each $g \in G$ with $d_{G}(g, \gamma) > K'$, we have \[
\diam_{X}( \pi_{\gamma}(B)) \le K'
\]
for the $d_{S}$-metric ball $B$ of radius $\frac{1}{K'} d_{G}(g, \gamma)$ centered at $g$.

If the orbit $\{\phi^{i}\}_{i \in \Z}$ of an element $\phi \in G$ is a weakly contracting quasi-geodesic by means of $X$, we say that $\phi$ is \emph{weakly contracting (by means of $X$)}.
\end{dfn}

Recall that all word metrics on $G$ are quasi-isometric to each other. Hence, the weakly contracting property  is preserved when we change the word metric $d_{G}$ into another word metric. We now generalize the WPD property in Definition \ref{dfn:WPDMod} to general metric spaces.

\begin{dfn}[Weak proper discontinuity]\label{dfn:WPD}
Let $G$ be a group and let $(X, d)$ be a metric space with an isometric action by $G$ and with basepoint $x_{0} \in X$. We say that an element $g \in G$ has the \emph{weak proper discontinuity property}, or is WPD for short, if its orbit $\{g^{i} x_{0}\}_{i \in \Z}$ is a quasi-geodesic and if for each $L$ there exists $N, M$ such that \[\# \big\{ h \in G : d_{X}(x_{0}, hx_{0}) < L \,\,\textrm{and}\,\, d_{X}(g^{N}x_{0}, hg^{N} x_{0}) < L\big\} < M.
\]
\end{dfn}

The proofs we gave in Section \ref{section:generic} and \ref{section:superpolynomial} lead to the following general theorem:

\begin{thm}\label{thm:generalWeak}
Let $(X, d)$ be a $\delta$-hyperbolic space. Let $G$ be a finitely generated group acting on $X$ with a WPD loxodromic isometry $\phi \in G$ that is weakly contracting by means of $X$. Let $S$ be a finite generating set of $G$. Then loxodromic isometries of $X$ are superpolynomially generic in $G$ with respect to the word metric $d_{S}$, i.e., for each $k>0$ there exists $K>0$ such that  \[
\frac{\# \{ g \in B_{S}(R) : \textrm{$g$ is a loxodromic isometry of $X$}\}}{\#B_{S}(R)} \le \frac{K}{R^{k/2}}
\]
for each $R>0$.
\end{thm} 

This theorem applies to some groups acting on $\delta$-hyperbolic spaces. Many such groups are hierarchically hyperbolic groups, which we explain in the next subsection.

\subsection{Hierarchically hyperbolic groups}\label{subsection:HHG}

The (non-exhaustive) list of the references for this subsection include \cite{behrstock2017hierarchically}, \cite{behrstock2019hierarchically}, \cite{abbott2021largest} and \cite{goldsborough2023induced}.

Hierarchically hyperbolic spaces (HHSs) and hierarchically hyperbolic groups (HHGs) were introduced by J. Behrstock, M. Hagen and A. Sisto as generalizaions of mapping class groups, Teichm{\"u}ller space, right-angled Artin groups and special cubical groups \cite{behrstock2017hierarchically}, \cite{behrstock2019hierarchically}. As an example, the word metric on $\Mod(\Sigma)$ is given by Masur-Minsky's distance formula up to multiplicative and additive error \cite{masur2000curve}, which assembles the shadows on the curve complexes of the surface $\Sigma$ and its subsurfaces. This assembly respects the inclusion and disjointness between subsurfaces. 

Motivated by the distance formula, roughly speaking, an HHS is a space $(X, d)$ that comes with various $\delta$-hyperbolic spaces $\{\mathcal{C}(U)\}_{U \in \mathfrak{G}}$, indexed by a set $(\mathfrak{G}, \sqsubseteq)$ with a partial order (for inclusion) and an orthogonality relation, such that the coarse geometry of $(X, d)$ can be accessed via the projection maps onto $\mathcal{C}(U)$'s. When a finitely generated group $G$ (in a sense) coboundedly acts on $(X,d)$ as isometries and is compatible with the HHS structure for $(X, d)$, we say that $G$ is an HHG. The precise definitions can be found in \cite[Definition 1.1, 1.21]{behrstock2019hierarchically}; see also \cite[Definition 7.6]{goldsborough2023induced} for a modern variation.

We will work with HHGs that satisfy an additional condition, namely, unbounded products (\cite[Definition 3.3]{abbott2021largest}). Not all HHG structures for a HHG $G$ have unbounded products, but such structures can be modified into a $G$-equivariant HHS structure via a \emph{maximization} procedure described in \cite[Theorem 3.7]{abbott2021largest}. At the cost of this, the new HHG structure might lack one feature of HHG structures, namely, that $G$ permutes the index set $\mathfrak{G}$ cofinitely (see \cite[Remark 3.4]{abbott2023structure}). This new structure is nevertheless sufficient for many purposes, and we call them \emph{$G$-HHSs}. Namely, the following theorems are sufficient for our use.

\begin{prop}[{\cite[Theorem 3.7, 3.8, 4.4, Corollary 6.2]{abbott2021largest}, \cite[Theorem 14.3]{behrstock2017hierarchically}}]\label{prop:abbott1}
Let $G$ be an HHG and let $d_{S}$ be a word metric on $G$. Then $G$ acts on a $\delta$-hyperbolic space $\mathcal{C}$ (that is the top curve complex of the $G$-HHS structure for $G$ after maximization) such that the following are equivalent for a path $\gamma : I \rightarrow (G, d_{S})$: \begin{enumerate}
\item $\gamma$ is a Morse quasi-geodesic on $G$;
\item $\gamma$ is weakly contracting by means of $\mathcal{C}$;
\item The projection $\Proj \circ \gamma : I \rightarrow \mathcal{C}$ of $\gamma$ to $\mathcal{C}$ is a quasi-geodesic.
\end{enumerate}
If the orbit $\gamma = \{g^{i} : i \in \Z\}$ of an element $g \in G$ satisfies the above property, then $g$ is WPD.
\end{prop}

Now, let $G$ be a non-virtually cyclic HHG with a Morse element $\varphi$. By \ref{prop:abbott1}, $G$ is acting on a $\delta$-hyperbolic space $X$ and $\varphi$ is a WPD isometry of $X$ that is weakly contracting by means of $X$. By applying Theorem \ref{thm:generalWeak}, we deduce the following:

\begin{thm}\label{thm:HHG}
Let $G$ be a non-virtually cyclic HHG with a Morse element, and let $S$ be a finite generating set of $G$. Then for each $k>0$, there exists $K>0$ such that \[
\frac{\#\big\{ g \in B_{S}(R) : \textrm{$g$ is not Morse or $\tau_{S}(g) \le 0.3R$} \big\}}{\#B_{S}(R)} \le \frac{K}{R^{k/2}}
\]
holds for all $R>0$.
\end{thm}

In general, being an HHS is preserved under quasi-isometries but group actions on the HHS structure need not be preserved. This makes it hard to answer if being a HHG (or $G$-HHS structure in general) is preserved under quasi-isometries. In \cite{goldsborough2023induced}, the authors discovered a mild assumption on the $G$-HHSs (that they call \emph{well-behavedness}) that makes the $G$-action quasi-preserved under the quasi-isometry. We will not spell out the precise definition of well-behaved HHSs (see \cite[Definition 6.9]{goldsborough2023induced}), but we note that many HHGs satisfy this assumption.

\begin{remark}[Examples of well-behaved HHGs]\label{ex:HHG}
Let $G$ be the fundamental group of a non-geometric graph 3-manifold. Note that they have unbounded Morse quasigeodesic \cite[Corollary 1.3]{charney2023complete}. In \cite[Theorem 3.1, Corollary 3.2]{hagen2023equivariant}, the authors provide combinatorial ingredients for HHG structure for $G$ and invoke \cite[Theorem 1.18]{behrstock2024combinatorial}. Here, the hyperbolic spaces for the HHG structure are quasi-trees of uniform quality, since there are finitely many types of hyperbolic spaces that arise. These hyperbolic spaces are induced subgraphs of the top hyperbolic space and the projections are defined naturally, so the projection maps from the top hyperbolic space to each sub-level hyperbolic space is uniformly coarsely surjective.

We now perform the maximization procedure as in \cite[Theorem 3.7]{abbott2021largest}. This will leave intact some hyperbolic spaces in the original HHG structure and possibly add dummy hyperbolic spaces that do not affect the uniform quality of quasi-trees nor the bounded domain dichotomy. In particular, each point of the unbounded hyperbolic spaces belongs to an unbounded quasigeodesic of uniform quality. The top hyperbolic space is now a $G$-equivariant cone-off of the Cayley graph of $G$ and $G$ has cobounded action on it. This concludes that the new $G$-HHS structure is well-behaved (see \cite[Definition 6.9]{goldsborough2023induced}).

Now let $G$ be a special cubical group, acting on a compact special cubical complex $C$ with finitely many immersed hyperplanes. The HHG structure for $G$ in \cite{behrstock2019hierarchically} is labelled with the factor system $\mathfrak{F}$ that consists of lifts of subcomplexes of $C$ associated to each subset $\mathcal{A} \subseteq \mathcal{H} = \{\textrm{immersed hyperplanes}\}$. Since $\mathcal{H}$ is finite, there are finitely many types of lifts of subcomplexes and hence finitely many types of hyperbolic spaces in this HHG structure. Also, by nature, the projection map from the top hyperbolic space to each sub-level hyperbolic space is uniformly coarsely surjective. Now, the maximization procedure as above preserves the property that points in each hyperbolic space are on an unbounded quasigeodesic of uniform quality. Again, the new $G$-HHS structure is well-behaved (see \cite[Definition 6.9]{goldsborough2023induced}).
\end{remark}

\begin{lem}\label{lem:HHG1}
Let $G$ be a group quasi-isometric to another group $H$ with a well-behaved $H$-HHS structure with a Morse element, and let $d_{G}$ be a word metric on $G$. Then $G$ acts on a Gromov hyperbolic space $X$ with a weakly contracting, WPD loxodromic element $\varphi \in G$. 
\end{lem}

This lemma follows from the proof of \cite[Corollary 7.5]{goldsborough2023induced} plus  \cite[Corollary 6.1, 6.2]{abbott2021largest}. We sketch the proof for completeness. For more details, consult \cite[Corollary 7.5]{goldsborough2023induced}.

\begin{proof}
Pick a well-behaved HHG structure for $H$, and let $Y$ be the top curve complex for $H$. Let $f : G \rightarrow H$ be a $K$-quasi-isometry between $G$ and $H$. In the proof of \cite[Corollay 7.5]{goldsborough2023induced}, the authors construct another Gromov hyperbolic space $X$, related to $Y$ via a $K'$-quasi-isometry $q : Y \rightarrow X$, on which $G$ has a $K'$-cobounded isometric action and such that $d_{X} \big( g \cdot q(y), q( (fg) \cdot y) \big)  \le K'$ for all $g \in G$ and $y \in Y$.  Moreover, the presence of a Morse element in $H$ implies that $Y$ and $X$ are unbounded, which implies that $G$ has a loxodromic isometry $\varphi$ on $X$. Finally, the authors prove that $\varphi$ satisfies WPD.

It remains to check the weakly contracting property of $\varphi$. First fix $x_{0} \in X$ and $y_{0} \in q^{-1} x_{0}$. Also let $\phi_{i} := f(\varphi^{i}) \in H$ for each $i$. Then $\kappa := \{\phi_{i} y_{0}\}_{i \in \Z}$ may not be an orbit of a single element in $H$, but is QI-embedded into $Y$. This is because the maps $f$ and $q$ are quasi-isometries and $\varphi$ is a loxodromic on $X$. Then Corollary 6.1 and 6.2 of \cite{abbott2021largest} implies that $\kappa$ is weakly contracting. More specifically, the proof of \cite[Theorem 4.4]{abbott2021largest} guarantees that $\kappa$ is weakly contracting with respect to the $d_{Y}$-nearest point projection onto $\kappa$. That means, there exist $0 < \rho' < 1$ and $K''>1$ such that for each $a, b \in H$ satisfying $d_{H}(a, b) \le \rho' d_{H}(a, \{\phi_{i}\}_{i \in \Z})$, then the $d_{Y}$-nearest point projection of $\{ay_{0}, by_{0}\}$ onto $\kappa$ has diameter smaller than $K''$.

Now let $g, h \in G$ be distinct elements such that $d_{G}(g, h) \le \frac{\rho'}{4K'^{2}} d_{G}(g, \{\varphi^{i}\}_{i \in \Z})$. Then we have \[
d_{H} ( f(g), f(h)) \le K' d_{G}(g, h) + K' \le 3K'd_{G}(g, h) -K' \le \frac{\rho'}{K'} d_{G}(g, \{\varphi^{i}\}_{i \in \Z} ) - K' \le \rho' d_{H} (f(g), \{\phi_{i}\}_{i \in \Z}),
\]
and the $d_{Y}$-nearest point projection of $\{f(g)y_{0}, f(h)y_{0}\}$ onto $\kappa$ has diameter smaller than $K''$.

From now on, when we say that a constant is uniform, it means that it only depends on $K'$, $K''$ and the quasi-geodesic constants for  $\{\varphi^{i} x_{0}\}_{i}$ and nothing else. We claim that the $d_{X}$-nearest point projection of $\{gx_{0}, hx_{0}\}$ onto $\{\varphi^{i}x_{0}\}_{i \in \Z}$ has uniformly bounded diameter. Towards the contradiction, suppose that the projection has large diameter. This means that $[gx_{0}, hx_{0}]_{X}$ passes through the uniform neighborhoods of $\varphi^{n} x_{0}$ and $\varphi^{m}x_{0}$ for large $|n-m|$. Furthermore, $[gx_{0}, hx_{0}]_{X}$ and $[q(fg y_{0}), q (fh y_{0})]_{X}$ are uniformly equivalent because the endpoints are pairwise uniformly close. Note that the pre-image of $[q(fg y_{0}), q (fh y_{0})]_{X}$ by $q$ is a quasi-geodesic in $Y$ and stays uniformly close to $[fg y_{0}, fh y_{0}]_{Y}$. Lastly, $q^{-1} \varphi^{i} x_{0}$ and $\phi_{i} y_{0}$ remains uniformly close because their $q$-images are uniformly close. In conclusion, $[fgy_{0}, fhy_{0}]_{Y}$ are uniformly close to $\phi_{n} y_{0}$ and $\phi_{m} y_{0}$ with large $|n-m|$. In particular, the projection of $[fgy_{0}, fhy_{0}]_{Y}$ onto $\kappa$ is larger than $K''$, a contradiction to the weak contraction of $\kappa$.
\end{proof}

Thanks to Lemma \ref{lem:HHG1}, we can apply our theory to a non-virtually cyclic, well-behaved HHG with a Morse element. Hence, we have: 

\begin{thm}\label{thm:HHG2}
Let $G$ be a group quasi-isometric to a non-virtually cyclic, well-behaved HHG with a Morse element, and let $S$ be a finite generating set of $G$. Then for each $k>0$,  there exists $K>0$ such that  \[
\frac{\#\big\{ g \in B_{S}(R) : \textrm{$g$ is not Morse or $\tau_{S}(g) \le 0.3R$} \big\}}{\#B_{S}(R)} \le \frac{K}{R^{k/2}}
\]
holds for all $R>0$.
\end{thm}

\subsection{Rank-1 CAT(0) groups and more}\label{subsection:CAT(0)}

Beyond Theorem \ref{thm:generalWeak}, the versatility of our method allows us to discuss genericity of strongly contracting elements in groups that act on general metric spaces without Gromov hyperbolicity. We explain necessary adaptations.

We first define the alignment among geodesics as in Definition \ref{dfn:align}, by means of the $d_{X}$-nearest point projection. Then, Fact \ref{fact:Behr} does not hold in general metric spaces: imagine a large square in the Euclidean plane. This affects the concatenation process (Lemma \ref{lem:Behrstock}, \ref{lem:GromProdFellow}) which are crucial for executing the entire strategy (for example in the proof of Claim \ref{claim:inducClaimWeak}, Lemma \ref{lem:mainGenLem} and Lemma \ref{lem:almostInj}). Thankfully, we have the concatenation lemmata among \emph{strongly contracting geodesics} in any metric space. These lemmata have been formulated by various authors \cite{arzhantseva2015growth}, \cite{sisto2018contracting} and \cite{yang2019statistically}.

\begin{lem}[{\cite[Lemma 2.4]{arzhantseva2015growth}}] \label{lem:strongContracting0}
Let $x,y \in X$, let $\gamma$ be a $K$-strongly contracting geodesic, let $p \in \pi_{\gamma}(x)$ and $q \in\pi_{\gamma}(y)$ and suppose that $d_{X}(p ,q) > 10K$. Then any $d_{X}$-geodesic $[x, y]$ intersects the $10K$-neighborhood of $\gamma$.
\end{lem}

\begin{lem}[{\cite[Lemma 2.5]{sisto2018contracting}}]\label{lem:strongContracting1}
For each $D, K>1$, there exist $E = E(K, D) > K, D$ such that the following holds.

Let $\kappa, \eta$ be $K$-strongly contracting geodesics in $X$. Suppose that $(\kappa, \eta)$ is $D$-aligned. Then for any $x \in X$, either $(p, \eta)$ is $E$-aligned or $(\kappa, p)$ is $E$-aligned.
\end{lem}

\begin{lem}[{\cite[Proposition 3.11]{choi2022random1}}]\label{lem:strongContracting2}
For each $D, K>1$ there exist $E = E(K, D) > K, D$ and $L = L(K, D) > K, D$ that satisfy the following.

Let $x$ and $y$ be points in $X$ and let $\kappa_{1}, \ldots, \kappa_{n}$ be $K$-strongly contracting geodesics longer than $L$ such that $(x, \kappa_{1}, \ldots, \kappa_{n}, y)$ is $D$-aligned. Then the geodesic $[x, y]$ has subsegments $\eta_{1}, \ldots, \eta_{n}$, in order from left to right, that are longer than $100E$ and such that $\eta_{i}$ and $\kappa_{i}$ are $0.1E$-fellow traveling for each $i$. In particular, $(x, \kappa_{i}, y)$ are $E$-aligned for each $i$.
\end{lem}

These serve as substitutes for Fact \ref{fact:hyperbolic}, Fact \ref{fact:Behr}, Lemma \ref{lem:Behrstock} and Lemma \ref{lem:GromProdFellow}. Fact \ref{fact:fellowAlign} and Fact \ref{fact:fellowOverlap} hold in general metric spaces as well.

Let us now assume that a finitely generated group $G$ is acting geometrically on $(X, d)$, and let $\phi \in G$ be a strongly contracting isometry of $X$. Let $d_{S}$ be a word metric on $G$. By Svar{\u c}-Milnor lemma, $(G, d_{S})$ and $(X, d)$ are $C$-quasi-isometric for some $C$. Hence, we can define the projection onto $\{\phi^{i}\}_{i \in \Z} \subseteq G$ by means of $X$, via $\Proj$: \[
\pi_{\{\phi^{i}\}_{i \in \Z}}(h) := \pi_{\{\phi^{i} x_{0} \}_{i \in \Z}}(hx_{0}).
\]
Since $d_{S}$ and $d$ are comparable, we have the following for each $D>0$:\begin{enumerate}
\item  if $d_{S}(g, \{\phi^{i}\}_{i \in \Z}) \ge D$, then $d(gx_{0}, \{\phi^{i}x_{0}\}_{i \in \Z} ) > D/C - C$;
\item if $d_{S}(g, \{\phi^{i}\}_{i \in \Z}) \ge D d_{S}(g, h)$, then $d(gx_{0}, \{\phi^{i}x_{0}\}_{i \in \Z} ) > \frac{D}{C} d(gx_{0}, hx_{0}) - C$.
\end{enumerate}
From this, we deduce that $\phi$ is a weakly contracting element in $(G, d_{S})$ by means of $X$. Furthermore, the WPD-ness of $\phi$ is immediate, since the action of $G$ on $X$ is properly discontinuous and the two metrics $d_{S}$ and $d$ are comparable. By employing $\phi$ and the concatenation lemma for strongly contracting geodesics, we can conclude:

\begin{thm}\label{thm:StrongResult}
Let $G$ be a non-virtually cyclic group acting geometrically on a metric space $(X, d_{X})$ with a strongly contracting isometry. Let $S$ be a finite generating set of $G$. Then for each $k>0$,  there exists $K>0$ such that\[
\frac{\#\big\{ g \in B_{S}(R) : \textrm{$g$ is not $d_{X}$-strongly contracting or $\tau_{S}(g) \le 0.3R$} \big\}}{\#B_{S}(R)} \lesssim \frac{K}{R^{k/2}}
\]
holds for all $R>0$.
\end{thm}

When counting elements in the $d_{X}$-metric balls instead of $d_{S}$-metric balls, the genericity of strongly contracting elements is due to W. Yang \cite{yang2020genericity}. Yang's technique deals with a wider class of group actions. Namely, Yang proved the genericity of strongly contracting elements for proper group actions involving a strongly contracting element. When the action is statistically convex-cocompact (such as geometric actions, $\pi_{1}$-actions on a cusped hyperbolic manifold or $\Mod(\Sigma)$-action on the Teichm{\"u}ller space), Yang proved the exponential genericity of strongly contracting elements. This strengthens Maher's genericity of pseudo-Anosovs in Teichm{\"u}ller balls.

When the group action is free, then $d_{X}$ induces a metric on $G$. This metric is quasi-isometric to $d_{S}$ when the group action is geometric moreover. Nonetheless, the genericity of strongly contracting (or Morse) elements with respect to $d_{X}$ does not imply the same with respect to $d_{S}$, as elucidated by \cite[Example 1]{gekhtman2018counting}. Hence, the genericity of Morse elements in Theorem \ref{thm:StrongResult} is new.

A particular example relevant in this context is a CAT(0) space. CAT(0) spaces are geodesic metric spaces where the geodesic triangles are no fatter than the comparison triangle drawn on the Euclidean plane. We say that an isometry $g$ of a CAT(0) space is \emph{rank-1} if acts as a translation on a bi-infinite geodesic $\gamma$ (called the \emph{axis} of $g$) that does not bound any flat half-plane. Rank-1 isometries exhibit strong contraction on CAT(0) spaces:

\begin{prop}[{\cite[Theorem 5.4]{bestvina2009higher}}]\label{prop:CAT(0)contracting}
Let $X$ be a proper CAT(0) space and let $g$ be its isometry. Then $g$ is strongly contracting if and only if it is rank-1. 
\end{prop}

Groups acting geometrically on a CAT(0) space are called \emph{CAT(0) groups} and those containing a rank-1 element is said to be \emph{rank-1}. We conclude: 

\begin{thm}\label{thm:CAT(0)}
Let $G$ be a non-virtually cyclic rank-1 CAT(0) group. Let $S$ be a finite generating set of $G$. Then for each $k>0$, there exists $K>0$ such that\[
\frac{\#\big\{ g \in B_{S}(R) : \textrm{$g$ is not rank-1 or $\tau_{S}(g) \le 0.3R$} \big\}}{\#B_{S}(R)} \le \frac{K}{R^{k/2}}
\]
holds for all $R>0$.
\end{thm}

Furthermore, every HHG with a Morse element geometrically act on a proper metric space with a strongly contracting isometry, by work of \cite{haettel2023coarse} and \cite{sisto2023morse}. Hence, Theorem \ref{thm:StrongResult} can be thought of as an alternative route to Theorem \ref{thm:HHG}.

\begin{remark}
Instead of working with strongly contracting isometries, one might wish to work with another $G$-space $Y$ that is Gromov hyperbolic on which loxodromics have WPD property. Starting from actions on CAT(0) spaces, H. Petyt, D. Spriano and A. Zalloum constructed the so-called \emph{hyperbolic model} that achieves this goal \cite{petyt2022hyperbolic}. This was recently generalized to actions on metric spaces with strongly contracting isometries by Petyt and Zalloum \cite{petyt2024constructing} and by S. Zbinden \cite{zbinden2024hyperbolic}. These constructions of a new hyperbolic space $Y$ are compatible with the original space $X$. Namely, the nearest point projections onto contracting geodesics in $X$ and in $Y$ are compatible, so a group element $g \in G$ is weakly contracting by means of $X$ if and only if it is weakly contracting by means of $Y$. Considering this, one may instead rely on these constructions to establish Theorem \ref{thm:StrongResult}.
\end{remark}

\subsection{Braid groups} \label{subsection:braid}

We finish this paper by studying a group without any Morse element, namely, braid groups. For more details about braid groups, refer to \cite{caruso2017on-the-genericity} and \cite{calvez2021morse}. 

Let $n\ge 3$. The braid group $G=B_{n}$ is a group whose elements are configurations of $n$ braids, up to isotopy, and the group operation is the concatenation of  braids. The center $C(G)=\{g \in G : \forall h \, [gh = hg]\}$ is a cyclic subgroup generated by the square of the half-twist $\Delta$, i.e., $C(G) = \langle \Delta^{2} \rangle$. The Nielsen-Thurston classification theorem asserts that each element of $G$ is either periodic (modulo center), reducible, or pseudo-Anosov. It has been asked whether pseudo-Anosov braids are generic in braid groups, which was answered partially by S. Caruso \cite{caruso2017on-the-genericity1} and later by S. Caruso and B. Wiest \cite{caruso2017on-the-genericity} for the standard generating set.

In \cite{calvez2021morse}, M. Calvez and B. Wiest showed the following: 

\begin{prop}[{\cite[Corollary 1.2]{calvez2021morse}}]\label{prop:calvez1}
Let $n\ge 3$ and let $G$ be the braid group of $n$ strands. Then there exists a finite generating set $S_{Gar}$ (which comes from the Garside structure of $G$) such that, in the Cayley graph of $G/C(G)$ with respect to the word metric $d_{S_{Gar}}$, the axis of any pseudo-Anosov braid is strongly contracting.
\end{prop}

In \cite{calvez2017curve}, Calvez and Wiest constructed an analogue of curve complex  $\mathcal{C}_{AL}$ (called the additional length complex) for Garside groups. In the context of braid group, periodic and reducible elements are elliptic elements of this complex \cite[Theorem B]{calvez2017curve}.
By combining this with Corollary 1.6 of \cite{calvez2021morse}, one can deduce a converse of Proposition \ref{prop:calvez1}: 
\begin{prop}[{\cite[Corollary 1.6]{calvez2021morse}}]\label{prop:calvez2}
Let $n\ge 3$ and let $G$ be the braid group of $n$ strands and let $S_{Gar}$ be the Garside generating set as in Proposition \ref{prop:calvez1}. If (the equivalence class of) $g \in G$ in the Cayley graph of $G/C(G)$ is Morse with respect to the word metric $d_{S_{Gar}}$, then it acts  loxodromically on $\mathcal{C}_{AL}$. In particular, $g$ is a pseudo-Anosov braid.
\end{prop}

Now, let $S$ be an arbitrary finite generating set of the braid group $G$. Then the group $G/C(G)$ equipped with the word metric $d_{S}$ is acting on the metric space $(G/C(G), d_{S_{Gar}})$ geometrically with a strongly contracting isometry. As braid group is not virtually cyclic, we can apply Theorem \ref{thm:StrongResult}: 

\begin{cor}\label{cor:braid}
Let $G$ be the braid group of at least $3$ strands and let $S$ be a finite generating set of $G$. Let $B_{S}(R)$ be the ball of radius $R$ in $G$ for the word metric $d_{S}$. Then for each $k$, there exists $K>0$ such that\[
\frac{\#\big\{ g \cdot C(G)\in G/C(G) : \textrm{$g \in B_{S}(R)$ and $g \cdot C(G)$ consists of non-pseudo-Anosov braids}\big\}}{\#\big\{ g \cdot C(G) \in G/C(G) : g\in B_{S}(R)\}} \le \frac{K}{R^{k/2}}
\]
holds for all $R>0$.
\end{cor} 

It now remains to estimate the number of braids, rather than the equivalence classes, in a large $d_{S}$-ball. Note that the center $C(G)$ of $G$ is undistorted in $(G, d_{S})$. A quick way to see this is to consider a homomorphism $\rho : G \rightarrow \Z$ that sends each standard generator to 1, or, the map of ``summing up the exponents of the generators". Since $\Delta$ is a product of standard generators only and without their inverses, $\rho(\Delta) = (\textrm{translation length of $\rho(\Delta)$ on $\Z$})$ is strictly positive. This implies that $\limsup_{i} d_{S}(id, \Delta^{i})/i$ is also strictly positive, as word norm can only decrease under homomorphisms.

Given this, there exists $M>0$ such that for each $[g] \in G/C(G)$, which is a coset in $G$, we have \[
\# \big( g \cdot C(G) \cap B_{S}(R) \big) \le \# \big( C(G) \cap B_{S}(2R) \big) \le MR.
\]
Using this, we observe \[
\begin{aligned} 
\#\big\{ g \in B_{S}(R) : \textrm{$g$ is a non-pA braid} \big\} &\le \sum_{[g] \in G/C(G) ,  \textrm{$[g]$ consists of non-pAs}} \# \big( [g] \cap B_{S}(R) \big) \\
&\le MR \cdot \# \big\{ [g] \in G/C(G) : g \in B_{S}(R), g \,\,\textrm{is non-pA}\big\} \\
&\lesssim \frac{M}{R^{k/2 - 1}} \# \big\{ [g] \in G/C(G) : g \in B_{S}(R)\big\}  \\
&\le \frac{M}{R^{k/2 - 1}} \# \big\{ g \in G : g \in B_{S}(R)\big\}.
\end{aligned}
\]
By increasing $k$, we conclude that: 

\begin{cor}\label{cor:braidGen}
Let $G$ be the braid group of at least $3$ strands and let $S$ be a finite generating set of $G$. Let $B_{S}(R)$ be the ball of radius $R$ in $G$ for the word metric $d_{S}$. Then for each $k$, there exists $K>0$ such that \[
\frac{\#\big\{ g \in B_{S}(R): \textrm{$g$ is a non-pseudo-Anosov braid}\big\}}{\#B_{S}(R)} \le \frac{K}{R^{k/2}}.
\]
holds for all $R>0$.
\end{cor}

\appendix

\section{Facts about $\delta$-hyperbolic spaces} \label{appendix:hyperbolic}

Throughout this section, let $X$ be a $\delta$-hyperbolic space. We will derive various facts about geodesics in $X$. Recall the identity regarding Gromov products: $d(a, b) = (a, c)_{b} + (b, c)_{a}$.

\begin{dfn}\label{dfn:sync}
Let $\epsilon, L >0$. Let $\gamma, \eta : [0, L] \rightarrow X$ be geodesics parametrized by length. We say that $\gamma$ and $\eta$ are \emph{$\epsilon$-synchronized} if $d\big(\gamma(t), \eta(t)\big) \le \epsilon$ for all $t \in [0, L]$.

We say that a geodesic triangle $\Delta$ is \emph{$\epsilon$-thin} if, for each vertex $a$ and its other side $\overline{bc}$ of $\Delta$, the $(a, c)_{b}$-long beginning subsegments of $[b, a]$ and $[b, c]$ are $\epsilon$-synchronized, and $(a, b)_{c}$-long ending subsegments of $[a, c]$ and $[b, c]$ are $\epsilon$-synchronized.
\end{dfn}

\begin{prop}[{\cite[Proposition III.H.1.17]{bridson1999metric}}]\label{prop:bridson}
Every geodesic triangle in $X$ is $4\delta$-thin.
\end{prop}

In the remaining, for a geodesic $\gamma$ in $X$, $\pi_{\gamma} : X \rightarrow \gamma$ denotes the nearest point projection onto $\gamma$. In an $\mathbb{R}$-tree, the projection of a point $a$ onto a geodesic $[b, c]$ is equal to the median point of $\triangle abc$. We recall the corresponding fact for $\delta$-hyperbolic spaces: 

\begin{lem}\label{lem:projApprox}
Let $x, y, z \in X$ and let $p \in [y, z]$ be such that $d(p, y) = (x, z)_{y}$ (and hence $d(p, z) = (x, y)_{z}$). Then $\pi_{[y, z]}(x)$ is contained in $N_{8\delta}(p)$.
\end{lem}

\begin{proof}
Let us parametrize $\gamma = [y, z]$, $\eta_{1} = [x, y]$ and $\eta_{2} = [x, z]$ as follows. \[
\begin{aligned}
\gamma : \big[-(x, z)_{y}, (x, y)_{z} \big] \rightarrow X, \quad \,\,\eta_{1} : \big[-(y, z)_{x}, (x, z)_{y} \big] \rightarrow X, \quad\,\, \eta_{2} : \big[-(y, z)_{x}, (y, x)_{z} \big] \rightarrow X.
\end{aligned}
\]
Since $\triangle xyz$ is $4\delta$-thin, we have $d\big(\gamma(t), \eta_{1}(-t) \big) <4 \delta$ for $t \le 0$ and $d\big( \gamma(t), \eta_{2}(t) \big) < 4\delta$ for $t \ge 0$. In particular, for $t > 8\delta$, we have\[
\begin{aligned}
d\big(x, \gamma(t)\big) &\ge d\big( \eta_{2}(-(y, z)_{x}), \eta_{2}(t) \big) - 4\delta >  d\big( \eta_{2}(-(y, z)_{x}), \eta_{2}(0) \big) + 4 \delta  \\
&\ge d\big(x, \gamma(0) \big) + 8 \delta - 4\delta = d(x, \gamma(0)\big).
\end{aligned}
\]
This implies $\gamma(t) \notin \pi_{[y, z]}(x)$. Similarly, $\gamma(t) \notin \pi_{[y, z]}(x)$ for $t < -8\delta$, and the conclusion follows.
\end{proof}

\begin{lem}\label{lem:synch}
Let $\epsilon>0$, let $x \in X$ and let $\gamma, \eta : I \rightarrow X$ be $\epsilon$-synchronized geodesics. Then $\diam \big(\pi_{\gamma}(x) \cup \pi_{\eta}(x) \big) < 2\epsilon + 16\delta$ holds.
\end{lem}

\begin{proof}
By Lemma \ref{lem:projApprox}, $\pi_{\gamma}(x)$ and $\pi_{\eta}(y)$ are bounded sets. By replacing $\gamma$ and $\eta$ with suitable $\epsilon$-synchronized subsegments that contain $\pi_{\gamma}(x)$ and $\pi_{\eta}(x)$, respectively, we may assume that $\gamma$ and $\eta$ are finite-length geodesics, i.e., $I = [0, L]$ for some $L>0$. Let \[\begin{aligned}
\tau := (x, \gamma(L))_{\gamma(0)} &= \frac{1}{2} \big( d(x,\gamma(0))+ L - d(x, \gamma(L)) \big), \\
\tau' := (x, \eta(L))_{\eta(0)} &= \frac{1}{2} \big( d(x,\eta(0)) + L - d(x, \eta(L))\big).
\end{aligned}
\]
Then we have $|\tau - \tau'| \le \frac{1}{2} \big( d(\gamma(0), \eta(0)) + d(\gamma(L), \eta(L)) \big) \le \epsilon$. This implies \[\begin{aligned}
\diam \big( \pi_{\gamma}(x) \cup \pi_{\eta}(x) \big) &\le \diam \big( \pi_{\gamma}(x) \cup \gamma(\tau) \big) + d\big(\gamma(\tau), \gamma(\tau') \big) + d\big( \gamma(\tau'), \eta(\tau')\big) +  \diam \big(\eta(\tau') \cup \pi_{\eta}(x)  \big) \\
&\le 8\delta + \epsilon + \epsilon + 8\delta \le 2\epsilon + 16\delta. \qedhere
\end{aligned}
\]
\end{proof}

\begin{lem}\label{lem:hyperbolic}
\begin{enumerate}
\item Let $x$ and $y$ be points in $X$ and $\gamma$ be a geodesic in $X$. Then $\pi_{\gamma}(x) \cup \pi_{\gamma}(y)$ has diameter at most $d(x, y) + 20\delta$.
\item Let $x$ and $y$ be points in $X$, let $\gamma$ be a geodesic in $X$ and let $p \in \pi_{\gamma}(x)$ and $q \in \pi_{\gamma}(y)$. Suppose that $p$ appears earlier than $q$ on $\gamma$ and that $d(p, q) > 20\delta$. Then any geodesic $[x, y]$ between $x$ and $y$ contains a subsegment that is $20\delta$-fellow traveling with $[p, q]_{\gamma}$. More precisely, there exist a subsegment $\eta$ of $\gamma$ that is $12\delta$-equivalent to $[p, q]_{\gamma}$, and a subsegment $\kappa$ of $[x, y]$, such that $\eta$ and $\kappa$ are $8\delta$-synchronized.
\end{enumerate}
\end{lem}

\begin{proof}
Let $p \in \pi_{\gamma}(x)$ and $q \in \pi_{\gamma}(y)$, and assume that $d(p, q) > 20\delta$. By replacing $\gamma$ by a subsegment of $\gamma$ containing $p$ and $q$, we can reduce the situation to the case that $\gamma$ is a finite geodesic $[z, w]$. By the assumption, $p$ is closer to $z$ than $q$ is. This means \[
d(z, w) = d(z, p) + d(p, q) + d(q, w).
\]
Let $A := (x, w)_{z}$, $B := (y, z)_{w}$ and $C :=d(z, w) - A-B$. By Lemma \ref{lem:projApprox}, $A$ and $d(z, p)$ differ by at most $8\delta$. Similarly, $B$ and $d(q, w)$ differ by at most $8\delta$. ($\ast$) This implies \[
C = d(z, w)-A-B \ge d(z, w) - d(z, p) - d(q, w) - 16\delta = d(p, q) - 16\delta > 4\delta.
\]
Let $\gamma : [0, d(z, w)] \rightarrow X$, $\eta : [0, d(z, y)] \rightarrow X$ and $\kappa : [0, d(z, x)] \rightarrow X$ be the length parametrization of $[z, w]$, $[z, y]$ and $[z, x]$, respectively. Since $\triangle zyw$ is $4\delta$-thin,  $\gamma|_{[0, A+C]}$ and $\eta|_{[0, A+C]}$ are $4\delta$-synchronized. Now, the $4\delta$-thinness of $\triangle xyz$ implies that one of the following holds: either \begin{enumerate}[label=(\Alph*)]
\item $\eta|_{[A + 4\delta, A+C]}$ is $4\delta$-synchronized with a subsegment of $[x, y]$, or 
\item $\eta|_{[A+4\delta, \tau]}$ and $\kappa|_{[A+4\delta, \tau]}$ are $4\delta$-synchronized for some $A+4\delta<\tau < A+C$.
\end{enumerate}
In Case (B), we have $d\big(\kappa(\tau), \gamma(\tau)\big) \le d\big(\kappa(\tau), \eta(\tau) \big) + d\big(\eta(\tau), \gamma(\tau)\big)\le  8\delta$ and \[\begin{aligned}
d(x, w) &\le d\big(x, \kappa(\tau)\big) + 8\delta + d\big(\gamma(\tau), w\big) \\
&\le d(z, x) + d(z, w) - 2\tau +8\delta < d(z, x) + d(z, w) - 2A.
\end{aligned}
\] This contradicts the fact that $2A = 2(x, w)_{z} = d(z, x) + d(z, w) - d(x, w)$. Hence, only Case (A) is possible and there is a subsegment $\gamma'$ of $[x, y]$ that is $8\delta$-synchronized to $\gamma|_{[A+4\delta, A+C]}$. Note that\begin{equation}\label{eqn:GromProdSameEq}
d\big( \gamma(A+4\delta), p\big)<12\delta,\, \, d\big(\gamma(A+C), q\big) < 8\delta
\end{equation}
holds by ($\ast$). Hence, we have $d(x, y) \ge \diam(\gamma') = C -4\delta \ge d(p, q) - 20\delta$. This implies that $\diam(\pi_{\gamma}(x) \cup \pi_{\gamma}(y)) \le d(x, y) + 20\delta$ and we conclude item (1).

Inequality \ref{eqn:GromProdSameEq} also implies that $\gamma|_{[A+4\delta, A+C]}$ and $[p, q]_{\gamma}$ are $12\delta$-fellow traveling. Since $\gamma'$ and $\gamma|_{[A+4\delta, A+C]}$ is $8\delta$-synchronized, we conclude that $\gamma' \subseteq [x, y]$ and $[p, q]_{\gamma}$ are $20\delta$-fellow traveling.
\end{proof}

\begin{lem}\label{lem:Behr}
Let $x \in X$ and let $(\gamma_{1}, \gamma_{2})$ be a $K$-aligned sequence of geodesics in $X$. Then either $(x, \gamma_{2})$ is $(K+60\delta)$-aligned or $(\gamma_{1}, x)$ is $(K+60\delta)$-aligned.
\end{lem}

\begin{proof}
Let $\gamma_{1} = [p_{1}, q_{1}]$ and $\gamma_{2} = [p_{2}, q_{2}]$. Let 
$\eta : [0, d(p_{2}, x)] \rightarrow X$ and  $\gamma_{2} : [0, d(p_{2}, q_{2})] \rightarrow X$ be the length parametrizations of $[p_{2}, x]$ and $\gamma_{2}$, respectively. Suppose to the contrary that $\pi_{\gamma_{1}}(x) \not\subseteq N_{K+60\delta}(q_{1})$ and $\pi_{\gamma_{2}}(x) \not\subseteq N_{K + 60\delta}(p_{2})$. Thanks to Lemma \ref{lem:projApprox} and the $4\delta$-slimness of $\triangle p_{2} q_{2} x$, $\eta|_{[0, A]}$ and $\gamma_{2}|_{[0, A]}$ are $4\delta$-synchronized for a constant $A \ge \diam(p_{2} \cup \pi_{\gamma_{2}}(x)) - 8\delta > K + 52\delta$.

Let $a \in \pi_{\gamma_{1}}(p_{2})$ and $b \in \pi_{\gamma_{1}}(x)$ be points such that $d(a, q_{1}) < K$ and $d(b, q_{1}) > K+50\delta$. Since $d(a, b) > 50\delta$, Lemma \ref{lem:hyperbolic}(2) guarantees constants $0 < B < C < d(x, p_{2})$ such that the subsegment $\eta|_{[B, C]}$ of $\eta$ is $20\delta$-fellow traveling with $[a, b]_{\gamma_{1}}$. We now have two cases: \begin{enumerate}
\item $A \le C$. In this case, for each $t \in [0, A]$ we have \[\begin{aligned}
 \left| d\big( \gamma_{2}(t), b \big) - (C-t) \right| &\le  d\big( \gamma_{2}(t), \eta(t) \big) + \left| d\big( \eta(t), \eta(C)\big) - (C-t) \right| + d\big(\eta(C), b \big) \\
 &\le 4\delta + 0+20\delta = 24\delta.
 \end{aligned}
\]
In particular, for each $t \in [0, A - 48\delta)$ we have  \[
d\big(\gamma_{2}(t), b\big) \ge (C-t) - 24\delta > (C-A) +24\delta \ge d(\gamma_{2}(A), b).
\]
This implies that $\pi_{\gamma_{2}}(b)$ does not intersect $N_{A - 48 \delta}(p_{2}) = \emptyset$, where $A-48\delta > K$. This contradicts the $K$-alignment of $(\gamma_{1}, \gamma_{2})$.
\item $A \ge C$. Then $C \in [0, A]$ and  $d\big(\eta(C), \gamma_{2}(C) \big) \le 4\delta$ holds. Namely, $\gamma_{2}(C) \in \gamma_{2}$ is $24\delta$-close to $b \in \gamma_{1}$. This implies \[
\begin{aligned}
\diam \big(\pi_{\gamma_{1}}(\gamma_{2}(C)), q_{1}\big) &\ge d(q_{1}, b) - d\big(\pi_{\gamma_{1}}(\gamma_{2}(C)), b\big) \\
&\ge (K+50\delta) - \big( d(\gamma_{1}, \gamma_{2}(C)) + d(\gamma_{2}(C), b) \big) \\
&\ge (K+50\delta) - 48\delta > K,
\end{aligned}
\]
again a contradiction to the $K$-alignment assumption.
\end{enumerate}
Considering these contradictions, we can conclude the desired statement.
\end{proof}

%
%

\medskip
\bibliographystyle{alpha}
\bibliography{pAweak}

\end{document}